\documentclass[a4paper,notitlepage,twoside,reqno,11pt]{amsart}

\usepackage{anysize}
\marginsize{3.4cm}{3.4cm}{3cm}{3cm}

\usepackage{amsmath}
\usepackage{amsthm}
\usepackage{amssymb}
\usepackage{color}
\usepackage{indentfirst}
\usepackage{graphicx}
\usepackage{bbding}

\usepackage[hypertexnames=false,bookmarksopen=true,linktocpage=true,pdfstartview={XYZ null null 1.25}]{hyperref}

\usepackage{autonum}

\begin{document}
	
\newtheorem{thm}{Theorem}[section]
\newtheorem{lem}[thm]{Lemma}
\newtheorem{cor}[thm]{Corollary}
\newtheorem{prop}[thm]{Proposition}
\newtheorem{exam}[thm]{Example}
\newtheorem*{mainthm}{Main Theorem}
\newtheorem{thmx}{Theorem}
\renewcommand{\thethmx}{\Alph{thmx}} 
\newtheorem{corx}[thmx]{Corollary}

\theoremstyle{definition}
\newtheorem*{defi}{Definition}
\newtheorem*{rmk}{Remark}
\newtheorem*{ques}{Question}
\newtheorem*{conj}{Conjecture}
\newtheorem*{nota}{Notations}
	
\newcommand{\ind}{\mathop{\mathrm{index}}}
\newcommand{\R}{\mathbb{R}}
\newcommand{\Z}{\mathbb{Z}}
\newcommand{\N}{\mathbb{N}}
\newcommand{\J}{\mathfrak{J}}
\newcommand{\C}{\mathbb{C}}
\newcommand{\D}{\mathbb{D}}
\newcommand{\EC}{\mathbb{\widehat{C}}}

\newcommand{\Area}{\textup{Area}}
\newcommand{\MA}{\mathcal{A}}
\newcommand{\MC}{\mathcal{C}}
\newcommand{\MH}{\mathcal{H}}
\newcommand{\MM}{\mathcal{M}}
\newcommand{\MO}{\mathcal{O}}
\newcommand{\MP}{\mathcal{P}}
\newcommand{\MS}{\mathcal{S}}
\newcommand{\MU}{\mathcal{U}}
\newcommand{\Crit}{\textup{Crit}}
\newcommand{\ii}{\textup{i}}

\renewcommand{\labelenumi}{\textup{(\alph{enumi})}}

\renewcommand\theequation{\thesection.\arabic{equation}} 
\makeatletter\@addtoreset{equation}{section}\makeatother

\author{Xiaole He, Yingqing Xiao and Fei Yang}

\address{School of Mathematics, Hunan University, Changsha, 410082, P.R. China}
\email{hxlvcl@foxmail.com}

\address{School of Mathematics, Hunan University, Changsha, 410082, P.R. China}
\email{ouxyq@hnu.edu.cn}

\address{School of Mathematics, Nanjing University, Nanjing, 210093, P.R. China}
\email{yangfei@nju.edu.cn}
	
\title[Cantor bubble Julia sets]{Rational maps with Cantor bubble Julia sets}

\begin{abstract}
It has been shown that Cantor bubble Julia sets can appear in the dynamics of polynomials and their singular perturbations. In this paper, we present a criterion that guarantees the existence of Cantor bubble Julia sets for certain rational maps with attracting or parabolic fixed points. Moreover, we construct other Cantor bubble Julia sets, including those with high-periodic attracting cycles and those with Hausdorff dimension two. Finally, we give a sufficient condition for Cantor bubble Julia sets to be quasisymmetrically equivalent to Cantor round bubbles.
\end{abstract}

\date{\today}

\keywords {Julia set; Cantor bubbles; Hausdorff dimension; quasisymmetrically equivalent}
\subjclass[2020]{Primary 37F10; Secondary 37F20, 37F35.}

\maketitle

\section{Introduction}

\subsection{Backgrounds}

The study of topological and geometric properties of Julia sets of rational maps is one of the important topics in complex dynamics.
It is known that for the unicritical polynomial $z\mapsto z^n$ with $n\geq 2$, its dynamics is well understood and the Julia set is the simple unit circle. However, when a small perturbation is made, then the dynamics may become very complicated.

The first example of singular perturbation was made by McMullen in 1988 \cite{McM88}. He considered a family of rational maps (now termed as \textit{McMullen family})
\begin{equation}
f_\lambda(z)=z^n+\lambda/z^m,
\end{equation}
where $n\geq 2$, $m\geq 1$ and $\lambda\in\C\setminus\{0\}$, and proved that if $1/n+1/m<1$ and $\lambda\neq 0$ is small enough, then the Julia set $J(f_\lambda)$ is a \emph{Cantor set of circles}. In particular, $J(f_\lambda)$ consists of uncountably many nested Jordan curves (see also \cite{DLU05}). One may find more such Julia sets in \cite{QYY15}.

For any $n\geq 2$ and $m\geq 1$, if $\lambda$ is large, then $J(f_\lambda)$ is a \textit{Cantor set}. In particular, each Julia component of $f_\lambda$ is a point. There are infinitely many hyperbolic components, called \textit{Sierpi\'{n}ski holes} in the parameter plane of $f_\lambda$, which correspond to \textit{Sierpi\'{n}ski carpet} Julia sets (see \cite{DLU05}, \cite{Ste06b}, \cite{Roe06a} and \cite{DP09}). In fact, if $1/n+1/m\geq 1$ (this happens only when $n=m=2$, or $n\geq 2$ and $m=1$), there exists arbitrarily small $\lambda\neq 0$ such that $J(f_\lambda)$ is a Sierpi\'{n}ski carpet (see \cite{DG08} and \cite{MD08}).
As a different topological type, the \textit{generalized Sierpi\'{n}ski gaskets} can also appear as the Julia sets of $f_\lambda$ (see \cite{DRS07} and \cite{HXY25}).

\medskip
In \cite{DM08}, Devaney and Marotta studied another singular perturbation of $z^n$ and considered the family
\begin{equation}
f_{\lambda, a}(z)=z^n+\frac{\lambda}{(z-a)^m},
\end{equation}
where $n\geq 2$, $m\geq 1$ and $\lambda, a\in\C\setminus\{0\}$.
They proved that if $|a|\neq 0,1$ and $\lambda\neq 0$ is small enough, then the Julia set of $f_{\lambda, a}$ consists of countably many disjoint Jordan curves and uncountably many point components, and the Fatou set $F(f_{\lambda, a})$ of $f_{\lambda, a}$ has exactly one multiply connected component which is completely invariant.
This is a different type of Julia sets which cannot appear in the dynamics of McMullen family. See Figure \ref{Fig:Julia-Devaney-Marotta} for an example.

\begin{figure}[!htpb]
  \setlength{\unitlength}{1mm}
  \setlength{\fboxsep}{0pt}
  \centering
  \fbox{\includegraphics[width=0.43\textwidth]{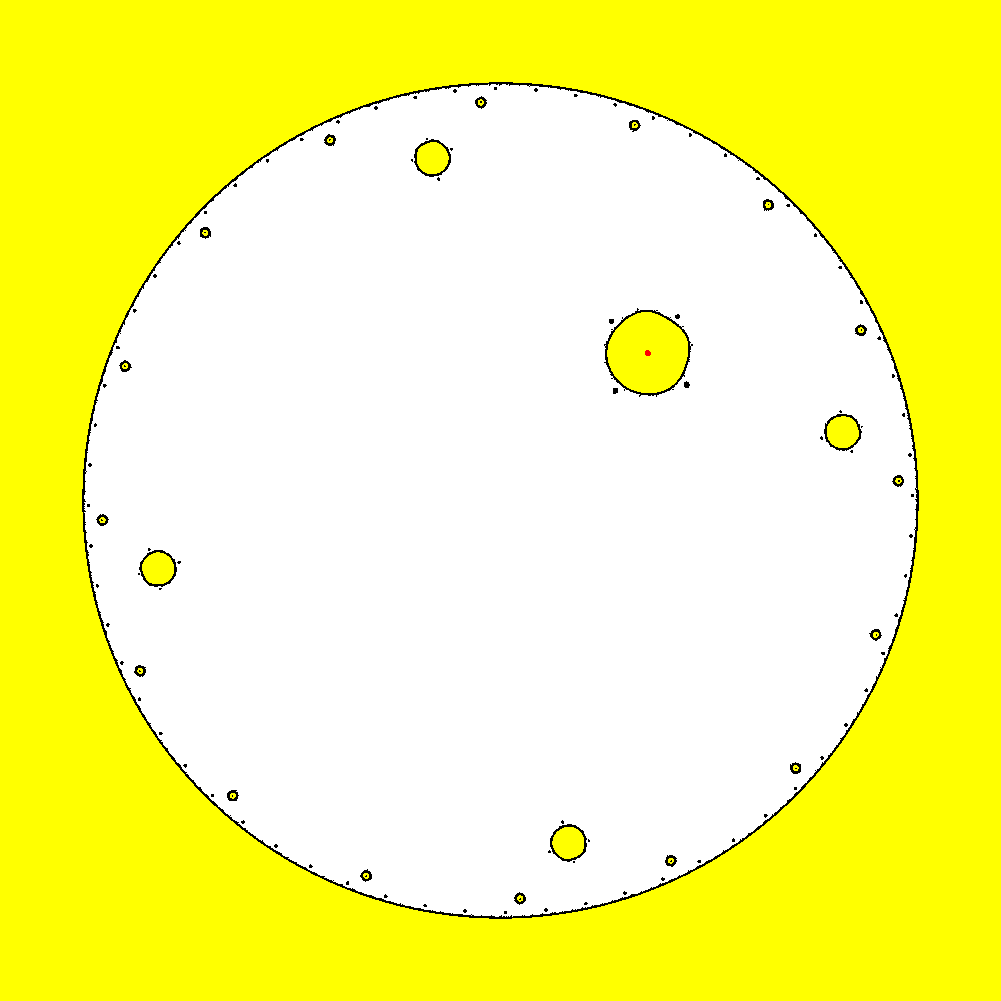}}\put(-18.2,38.7){$a$}\quad
  \fbox{\includegraphics[width=0.43\textwidth]{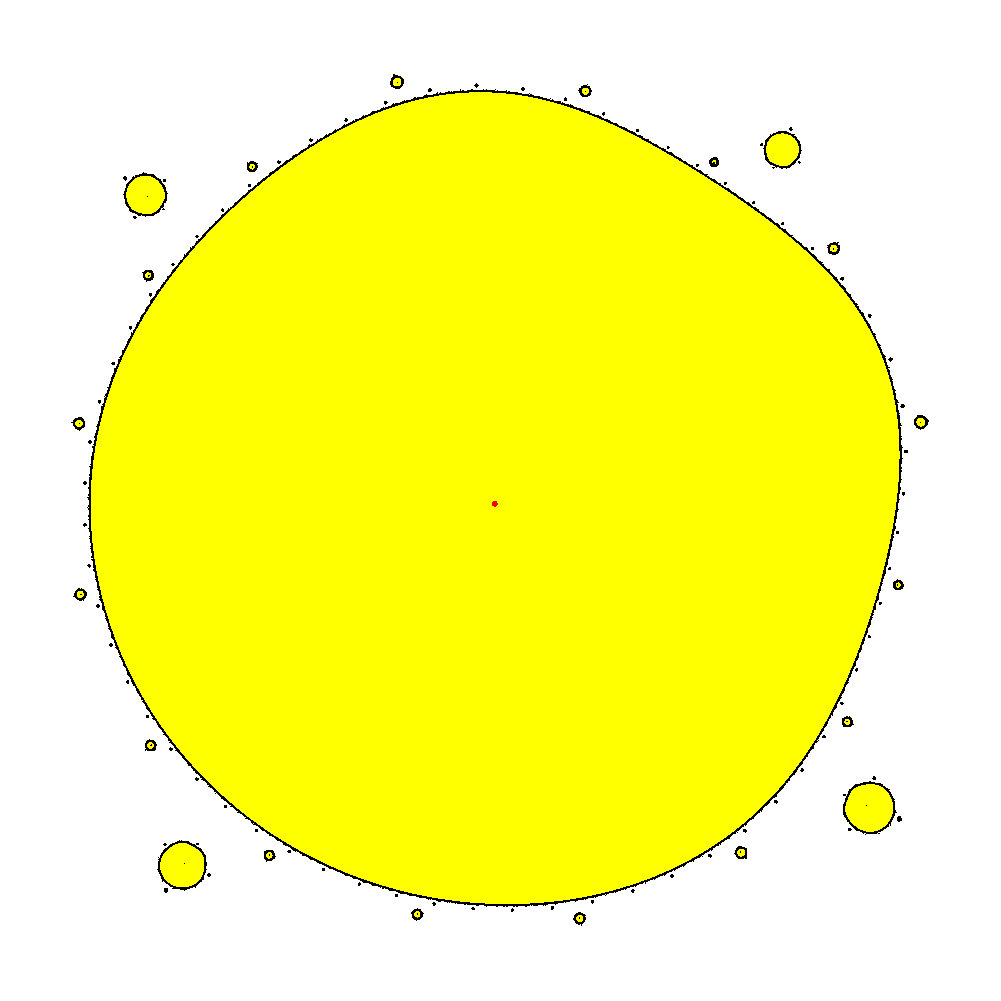}} \put(-32,27){$a$}
  \caption{The Julia set of the Devaney-Marotta family $f_{\lambda,a}(z)=z^4+\lambda/(z-a)^4$ and its zoom around the pole $a$, where $a=\frac{1}{2}e^{\pi i/4}$ and $\lambda=10^{-4}e^{4\pi i/3}$. It can be seen clearly that $F(f_{\lambda,a})$ contains exactly one multiply connected component and $J(f_{\lambda,a})$ consists of infinitely many Jordan curves and points.}
  \label{Fig:Julia-Devaney-Marotta}
\end{figure}

Later, such type of Julia sets was also found in the family $f_{\lambda,a}$ with $|a|=1$ \cite{GM10} and in many other families of rational maps. For examples, the singular perturbation of $z^n$ with two poles \cite{Mar08b} and the \textit{generalized McMullen maps}
\begin{equation}
F_{a, b}(z)=z^n+\frac{a}{z^n}+b,
\end{equation}
where $n\geq 2$ and $a, b \in \mathbb{C}\setminus\{0\}$. Specifically, it was proved in \cite{XQY14} that if both free critical values of $F_{a, b}$ lie in the same invariant bounded Fatou component, then the Julia set of $F_{a,b}$ share an identical topological structure as above.

\medskip
For the convenience of describing such type of Julia sets, we introduce the following concept, which, to our knowledge, first appeared in \cite{XQY14}.

\begin{defi}[Cantor bubbles]
A perfect subset $X$ of the Riemann sphere $\EC$ is called a \textit{Cantor set with bubbles} (\textit{Cantor bubbles} in short) if it is the disjoint union of countably many Jordan curves and uncountably many points, and its complement $\EC\setminus X$ contains exactly one multiply connected component.
\end{defi}

If the Julia set $J(f)$ of a rational map $f$ is homeomorphic to a Cantor set with bubbles, then we call $J(f)$ a \textit{Cantor bubble Julia set}.
Note that Cantor bubble Julia sets can be seen as a mixture of Cantor circle Julia sets and Cantor Julia sets in some sense.
We would like to mention that the Cantor bubbles were also called \textit{semi-Cantor sets} recently (see \cite{Etk22}, \cite{HE25}).

\subsection{Main results}

In view of that the proof methods for the existence of Cantor bubble Julia sets above depend on special families, it is meaningful to give a general criterion to judge whether a
given rational map has a Cantor bubble Julia set.

\begin{thm}\label{thm:criterion}
Let $f$ be a rational map of degree $d\geq 2$. Suppose
\begin{enumerate}
\item $f$ has two invariant Fatou components $U$ and $V$;
\item $U$ is completely invariant while $V$ is not; and
\item $U\cup V$ contains all critical values of $f$.
\end{enumerate}
Then the Julia set $J(f)$ of $f$ is a Cantor set with bubbles.
\end{thm}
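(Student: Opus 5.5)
Since $U$ is completely invariant, $f^{-1}(U)=U$ and $J(f)=\partial U$. In particular every other Fatou component has its boundary inside $\partial U$. The plan has four steps: establish the basic structure of $U$ and $V$, show $U$ is infinitely connected, show every Julia component is a point or a Jordan curve, and finally confirm that both kinds occur in the right quantities.

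\textbf{Step 1: basic structure of $U$ and $V$.}
\begin{itemize}
\item All critical values lie in $U\cup V$. Hence every preimage of $V$ other than $V$ itself, and every iterated preimage $W$ of $V$, carries no critical values in its image chain except at the component $V$.
\item $V$ is simply connected. Otherwise, since $V$ is invariant and not completely invariant, one argues via Riemann--Hurwitz with the critical values of $f|_V$ (all inside $V$), or via the fact that a multiply connected invariant component of a non-completely-invariant type would force $J$ to separate.
\item More cleanly: $f|_V:V\to V$ is proper, so Riemann--Hurwitz applies. The claim that $U$ is the only multiply connected component is what we want, so I would instead show directly that $\partial V$ is a Jordan curve.
\item $V$ is attracting or parabolic (a Siegel disk would require critical orbits accumulating on its boundary, impossible since critical values are in the interiors of $U\cup V$ and their orbits converge to the attracting or parabolic points).
\item So $f$ is geometrically finite: every critical orbit lies in Fatou components and converges to attracting or parabolic points. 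Hence $J(f)$ is locally connected on each connected component, and the postcritical set stays away from $J$ except near parabolic points.
\end{itemize}

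\textbf{Step 2: $U$ is infinitely connected.}
\begin{itemize}
\item $V$ is not completely invariant, so $f^{-1}(V)$ has a component $V_1\neq V$. Since $V_1\cap\mathrm{CV}=\emptyset$ is not needed, only that $V_1$ maps onto $V$.
\item $U\neq V$ and $U$ is completely invariant, so $U$ is not simply connected. Otherwise $\EC\setminus U$ would be connected, the Julia set would be connected, and every other Fatou component, including $V$, would be simply connected. With $U$ simply connected and completely invariant, Riemann--Hurwitz gives $d-1$ critical points in $U$. The remaining $d-1$ critical points map into $V$ (all critical values are in $U\cup V$), forcing $f^{-1}(V)$ to contain critical points. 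Counting again, $\deg(f|_V)=d$, contradicting that $V$ is not completely invariant.
\item A completely invariant domain has connectivity $1$, $2$ or $\infty$. Connectivity $2$ would make $f$ conjugate-like to a map with an invariant annulus, impossible for a Fatou component of attracting or parabolic type. Hence $U$ is infinitely connected.
\end{itemize}

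\textbf{Step 3: each Julia component is a point or a Jordan curve.} This is the main obstacle.
\begin{itemize}
\item Take a complementary component $K$ of $U$.
\item The complement $\EC\setminus U$ is a union of components of $J\cup\bigcup_n f^{-n}(V)$. Since $V$ and its preimages are the only non-$U$ Fatou components (every Fatou component eventually maps to $U$ or $V$ by Sullivan, and $f^{-1}(U)=U$), $K$ either contains the closure of exactly one iterated preimage $W$ of $V$, or is a Julia component containing no Fatou component.
\item First case: I would show that $K=\overline W$ and that $\partial W$ is a Jordan curve. Use a puzzle-like nest: choose a Jordan curve $\gamma\subset U$ surrounding $\overline V$ and separating it from all other complementary components that are needed. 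Then $f^{-1}(\gamma)$ cuts out a disc $D\supset\overline V$ with $f:D\to f(D)\supset D$ of degree $k<d$, a polynomial-like map whose filled Julia set is $\overline V$, since all critical values inside are in $V$. It is hybrid equivalent to a polynomial with connected Julia set and with all critical points in the immediate basin. Hence that polynomial's basin is a Jordan domain (it is hyperbolic or parabolic with a single bounded Fatou component, e.g.\ conjugate to $z^k$ or a parabolic Blaschke-type model), so $\partial V$ is a Jordan curve. Pulling back conformally, since no critical values lie off $U\cup V$, each $\partial W$ is a Jordan curve, and $K=\overline W$.
\item Second case: show $K$ is a point. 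Points of $K$ have orbits avoiding the polynomial-like domains forever, so they stay in the complement of neighbourhoods of the postcritical set. Inverse branches along such orbits are univalent on definite discs and contract, by hyperbolic-metric arguments, with parabolic points handled by standard geometric-finiteness estimates. Thus the nested pieces containing $K$ shrink to a point.
\end{itemize}

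\textbf{Step 4: counting.}
\begin{itemize}
\item Iterated preimages of $V$ are countably many, and infinitely many since $V_1$ exists and backward orbits of $V_1$ are infinite. This gives countably many Jordan curves.
\item Uncountably many point components follow from $J$ being perfect and having only countably many curve components.
\item The Fatou components other than $U$ are simply connected, so $U$ is the unique multiply connected complementary component.
\end{itemize}
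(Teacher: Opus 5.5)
The broad plan matches the paper: Jordan boundary for $\partial V$ by renormalization, pull back to its iterated preimages, show all other components are points, then count. The problem is the first case of your Step 3, which carries the theorem. It is asserted rather than proved, and as formulated it fails in general.

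\textbf{The polynomial-like restriction around $\overline{V}$.} There are three problems with this step as written.
\begin{enumerate}
\item A polynomial-like restriction needs $\overline{D}\subset\Delta$; your condition $f(D)\supset D$ is not enough.
\item You give no way to choose $\gamma$.
\item Declaring the filled Julia set to be $\overline{V}$ presupposes $K_V=\overline{V}$, which is what you are trying to prove.
\end{enumerate}
When $U$ is attracting this can be repaired. Let $N_0$ be a small forward-invariant disc about the attracting point, $U_n$ the component of $f^{-n}(N_0)$ containing it, and $\Delta$ the component of $\EC\setminus\overline{U_n}$ containing $K_V$, with $n$ large. Then count critical points with Riemann--Hurwitz.

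But $U$ may be parabolic, and nothing prevents its parabolic point $p$ from lying on $\partial V$. Take $f(z)=z^2+\mu+\lambda/(z+0.3)$, with $\lambda>0$ small and $\mu$ real chosen so that a multiplier-one fixed point $p$ persists near $\tfrac12$. One checks that this map satisfies all three hypotheses. Since $f(x)>x$ and $f'(x)>0$ on $(p,+\infty)$, the whole ray $(p,+\infty)$ lies in $V$, so $p\in\partial V$. Any disc $D\supset\overline{V}$ then contains a forward-invariant attracting petal at $p$, and that petal lies in $U$. The petal therefore belongs to the filled Julia set of every polynomial-like restriction around $\overline{V}$. So the picture you describe (filled Julia set $\overline{V}$, all critical points in one bounded basin of the straightened polynomial) cannot occur.

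The paper avoids this by applying McMullen's straightening theorem to the periodic Julia component $J_V$ itself. That theorem needs no compact containment and yields a rational model $g$ in which $\EC\setminus\phi(K_V)$ is a completely invariant Fatou component. A quasiconformal surgery then reduces to a map $h$ with exactly two critical values and two completely invariant basins, and Steinmetz's theorem gives the Jordan curve.

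\textbf{The point-component step.} Your second case is only a heuristic. Orbits of such components do enter neighbourhoods of $\overline{V}$, where the degrees of pullbacks accumulate. They may also approach the parabolic point of $U$, which lies in $J(f)$ and in the closure of the postcritical set. So univalent inverse branches on discs of definite size are not available.

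More fundamentally, ``nested pieces shrink'' is false for geometrically finite maps in general: hyperbolic McMullen maps have uncountably many wandering Jordan-curve Julia components. The hypothesis on critical values therefore has to enter essentially. The paper does this as follows:
\begin{enumerate}
\item By Pilgrim--Tan, a non-point component of a geometrically finite map is a Jordan curve or eventually periodic.
\item Sullivan's theorem handles the Jordan-curve case.
\item Straightening shows that a periodic non-point component would force its complementary piece to meet the critical orbit. That is impossible, since the critical orbit stays in $U\cup V$.
\end{enumerate}

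\textbf{Step 4.} Being perfect with only countably many curve components does not give uncountably many point components. A sequence of circles shrinking to a point is perfect and has countably many components. You need the fact that a disconnected Julia set has uncountably many components.
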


By analyzing the critical orbits and dynamics in the Fatou components of the rational families mentioned before, Theorem \ref{thm:criterion} covers the corresponding results in \cite{DM08}, \cite{Mar08b}, \cite{XQY14} and \cite{HE25}.
As an immediate application of Theorem \ref{thm:criterion}, we conclude that if a cubic polynomial $f$ has an attracting fixed point and a critical point in $\C$ escaping to $\infty$, then $J(f)$ is a Cantor set with bubbles (this fact is known to some experts). See Figure \ref{Fig:Julia-Cantor bubble}.

\begin{figure}[!htpb]
  \setlength{\unitlength}{1mm}
  \centering
  \includegraphics[width=0.85\textwidth]{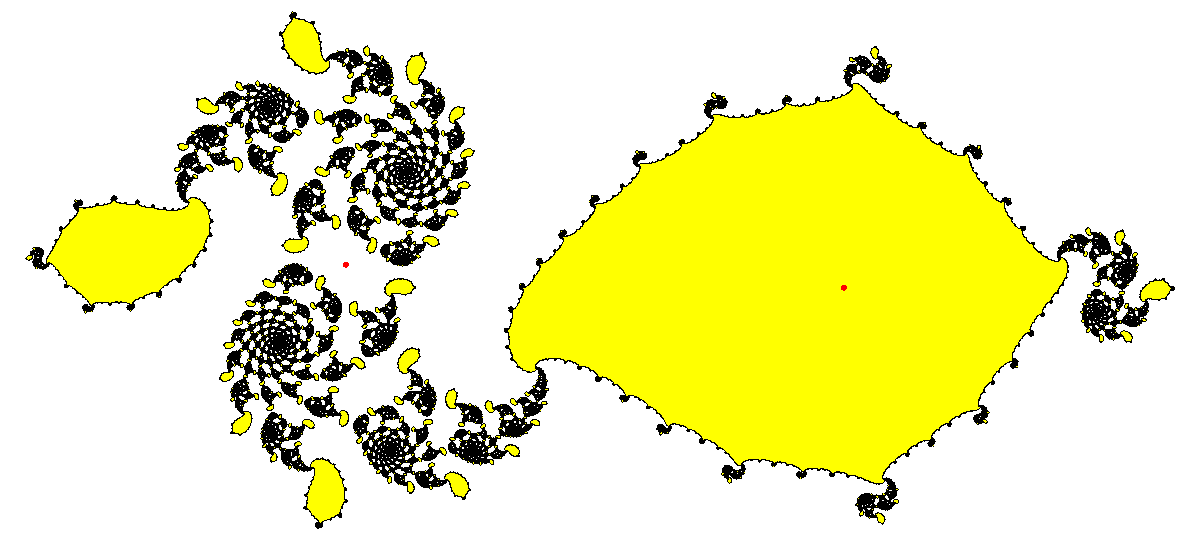}
  \put(-36.7,21.5){$0$}
  \put(-87.5,25.2){$-a$}
  \caption{The Julia set of $f(z)=\frac{3}{2}az^2+z^3$ with $a=0.06+1.31i$ is a Cantor set with bubbles, where $f$ has a superattracting fixed point at $0$ and an escaping critical point at $-a$ (the picture has been rotated).}
  \label{Fig:Julia-Cantor bubble}
\end{figure}

In view of the periodic Fatou components of the known rational maps having Cantor bubble Julia sets all have period one, a meaningful question is to explore whether such Julia sets can appear in the rational maps with high periodic Fatou components. The following result gives an affirmative answer to this question.

\begin{thm}\label{thm:period}
For any integer $p\geq 1$, there exist cubic polynomials having a superattracting cycle of period $p$ whose Julia sets are Cantor bubbles.
\end{thm}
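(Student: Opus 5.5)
Theorem~\ref{thm:criterion} does not apply directly when $p\geq 2$, since the periodic bounded Fatou components are then not invariant. The plan is to pass to the iterate $g=f^{p}$, or better, to prove a period-$p$ analogue of Theorem~\ref{thm:criterion} by redoing its proof. We look for cubic polynomials $f$ of the following kind. One critical point $c_1$ lies in a superattracting cycle of period $p$; concretely, $f$ is a small perturbation, inside the cubic connectedness locus slice, of a quadratic-like map with a superattracting $p$-cycle. The other critical point $c_2$ escapes to $\infty$. Then $\infty$ gives a completely invariant Fatou component $U=A(\infty)$, and $U$ contains the critical value $f(c_2)$. The cycle $V_0\to V_1\to\cdots\to V_{p-1}\to V_0$ of bounded Fatou components contains the other critical value $f(c_1)$.

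For existence, we use the Branner--Hubbard picture. Fix the escaping critical point $c_2$ with large escape rate. Then the filled Julia set splits into puzzle pieces, and the component $K_1$ containing $c_1$ is renormalizable: there are a topological disk $D\ni c_1$ and $k\ge1$ such that $f^{k}:D\to f^k(D)$ is a quadratic-like map with $D\Subset f^k(D)$. Its straightening parameter ranges over the whole Mandelbrot set. The simplest case is $k=1$: we take $D$ to be the component of $f^{-1}(\{G<R\})$ containing $c_1$, where $G$ is the Green function and $R=G(c_2)$. This component maps with degree two onto $\{G<R\}$, while the other preimage component maps with degree one. By Douady--Hubbard straightening and Branner--Hubbard, the map from this parameter slice to $M$ is a homeomorphism. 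So we choose parameters whose straightened map is the center of a period-$p$ hyperbolic component of $M$. Then $f$ has a superattracting $p$-cycle containing $c_1$.

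For the topology, let $\mathcal V=\bigcup_i V_i$, which satisfies $f(\mathcal V)=\mathcal V$. All critical values lie in $U\cup\mathcal V$. The argument of Theorem~\ref{thm:criterion}, applied to $f$ with $V$ replaced by $\mathcal V$, should go through. The annulus $A=\{G<R\}\setminus \overline{D'}$ (suitably chosen) pulls back without critical points, because all critical values avoid the relevant region. Iterated preimages of the basin boundaries are Jordan curves $\partial V$ for the components $V$ of $f^{-n}(\mathcal V)$; these are Jordan since $f^p|_{V_0}$ is conjugate to $z^2$ and $f$ is hyperbolic, so boundaries are locally connected and simply connected basins are Jordan domains. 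Every other Julia component is nested in a decreasing sequence of disks. The moduli of the annuli between consecutive disks are bounded below by hyperbolicity together with the univalent pullback structure. Hence these components are points. Uncountability follows from the symbolic dynamics of the two branches. The only multiply connected Fatou component is $U$, since every bounded Fatou component is an iterated preimage of some $V_i$ and is simply connected.

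The main obstacle is the degeneration argument showing that the non-bubble components are points. It needs a uniform lower bound on the moduli of the nested annuli along the pieces that do not eventually land in the renormalization domain $D$. I would handle this by a Grötzsch-type argument: each such nest contains infinitely many annuli, all univalent pullbacks of finitely many fundamental annuli; alternatively, I would simply quote the proof of Theorem~\ref{thm:criterion} once hyperbolicity is established.
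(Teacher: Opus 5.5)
There is a genuine gap, and it sits in the construction itself, earlier than the degeneration estimate you flag. With $k=1$, the component $K_1$ of the filled Julia set containing $c_1$ is invariant, and $f|_D$ is hybrid equivalent to $z^2+c_p$, where $c_p$ is the center of a period-$p$ hyperbolic component. All of $V_0,\ldots,V_{p-1}$ then lie in the interior of this one full continuum $K_1$. So for $p\ge 2$ the Julia component $\partial K_1\cong J(z^2+c_p)$ is not a Jordan curve; otherwise $\mathrm{int}\,K_1$ would be a single Fatou component. For $p=2$ these are exactly the basilica copies in the small-$a$ case of Example~\ref{exam:bubble-p=2}, which the paper shows is \emph{not} a Cantor bubble.

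Your topology paragraph conflates two statements. ``Every bounded Fatou component is a Jordan domain'' is true here. ``Every non-degenerate Julia component is a Jordan curve'' is what the definition requires, and it is false here. For the same reason, Step~1 of Theorem~\ref{thm:criterion} does not go through with $V$ replaced by $\mathcal V$. After straightening $\partial K_1$, the bounded critical value lies in a $p$-periodic Fatou component rather than an invariant one. So you cannot reduce to a map with exactly two Fatou components, both completely invariant, which is what produces a Jordan curve.

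What you need is for the $p$ bounded periodic Fatou components to lie in $p$ \emph{distinct} components of the filled Julia set. Each such component must return to itself under $f^{\circ p}$ with degree two and a superattracting \emph{fixed} critical point. In your language, that is renormalization period $k=p$ with straightening parameter $0$, not $k=1$ with a period-$p$ center.

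The paper forces this combinatorially. It starts from $f_0(z)=-3z^2+z^3$, with first-level pieces $U_0\ni 0$ (mapping with degree two) and $U_1$ (mapping with degree one). It prescribes a cycle $0=z_0\mapsto z_1\mapsto\cdots\mapsto z_{p-1}\mapsto 0$ with $z_1\in U_1$ and all other $z_i\in U_0$, and builds a semi-polynomial with this itinerary. It then realizes this semi-polynomial by a cubic polynomial $g$, using the Cui--Tan/Zhang--Jiang characterization together with the Levy--Berstein type Corollary~\ref{cor:Bers-Levy}. Because the cycle visits $U_1$ exactly once, $g^{\circ p}$ has exactly one critical value in the component containing $w_1$, and Step~1 of Theorem~\ref{thm:criterion} applies.

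If you prefer to stay in the Branner--Hubbard framework, you would have to choose a period-$p$ critical nest with such an itinerary and straightening parameter $0$, and justify that such parameters exist. The choice you actually made yields the wrong Julia set.
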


The space $S_p:=\{$cubic polynomials having a superattracting $p$-cycle in $\C\}$ has been studied extensively (see \cite{Mil09}, \cite{BKM10}, \cite{Wan21} and the references therein). However, the existence of Cantor bubble Julia sets in all $S_p$'s was not guaranteed.
In fact, taking $p=2$ as an example, we need to be careful of the appearance of the ``basilica structure" in the Julia sets, i.e., it may happen that a Julia component is homeomorphic to the Julia set of $z\mapsto z^2-1$, which is not a Jordan curve (see Figure \ref{Fig:Julia-Cantor bubble-p}). To this end, we first construct topological cubic polynomials with the desired combinations and then use Thurston's theory to prove that these topological polynomials can be realized by genuine polynomials in Theorem \ref{thm:period}.

\medskip

According to Przytycki and Zdunik, if a polynomial has a disconnected but not totally disconnected Julia set, then the Hausdorff dimension of this Julia set is strictly larger than $1$ \cite{PZ21}. Therefore, all polynomial Cantor bubble Julia sets have Hausdorff dimension strictly larger than $1$. On the other hand, according to Shishikura \cite[Theorem 2]{Shi98}, the Hausdorff dimension of Cantor bubble Julia sets can be arbitrarily close to $2$. In this paper, we prove the following result.

\begin{thm}\label{thm:dim-2}
There exist Cantor bubble Julia sets of polynomials having Hausdorff dimension two.
\end{thm}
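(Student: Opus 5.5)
We want a polynomial whose Julia set is Cantor bubbles and has Hausdorff dimension $2$. We will combine Theorem \ref{thm:criterion} with Shishikura's parabolic implosion mechanism, but embedded in a bubble component. The most natural route is to let the Julia components that are Jordan curves carry the full dimension: every Jordan-curve component of a Cantor bubble Julia set of a polynomial is, by pulling back, a homeomorphic copy of the boundary of the bounded invariant Fatou component $V$. The dynamics on $\partial V$ is a polynomial-like restriction. Namely, there is a polynomial-like map $f\colon V'\to V''$ of degree $k=\deg(f|_V)\ge 2$ whose filled Julia set is $\overline V$. By Douady--Hubbard straightening this is hybrid conjugate to a polynomial $P$ of degree $k$ whose Julia set is a Jordan curve. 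It therefore suffices to find $f$ such that $\dim_H \partial V=2$, since $\partial V\subset J(f)$.

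\textbf{Step 1 (a family).} Take cubic polynomials $f_c$ with a fixed point $0$ that has a bounded invariant Fatou component $V_c$ containing one critical point, and with the other critical point escaping to $\infty$. Then $U$ is the basin of infinity, which is completely invariant. $V$ is not completely invariant since the other critical point escapes. Condition (c) holds whenever the second critical value lies in $U\cup V$. In this regime, Theorem \ref{thm:criterion} gives Cantor bubbles for the attracting case, and also for the parabolic case since the criterion was stated for Fatou components in general. The restriction to a neighborhood of $\overline{V_c}$ is quadratic-like, hybrid equivalent to $z^2+\sigma$ with $\sigma$ in the main cardioid or on its boundary (parabolic).

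\textbf{Step 2 (dimension two).} Shishikura's theorem gives $\sigma_0$ on the boundary of the main cardioid with $\dim_H J(z^2+\sigma_0)=2$. However, such $\sigma_0$ from his Baire argument are typically not parabolic or attracting, so $\partial V$ would not be a Jordan curve. Instead, I would use the stronger known statement, via parabolic implosion, that there are parabolic parameters $\sigma$ on the boundary of the main cardioid (multiplier $e^{2\pi i p/q}$) for which the dimension is close to $2$ but less than $2$. This does not give exactly $2$. So the real approach must be different: the dimension should come from the Cantor part, not from $\partial V$.

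\textbf{Revised Step 2.} Use a Baire category argument in the cubic family directly. Fix a one-parameter slice where $0$ is a parabolic fixed point with multiplier $1$, so $V$ is a parabolic basin, and let the free critical value vary. Consider the set $\mathcal E$ of slice parameters where the free critical point escapes. Inside $\mathcal E$, look at boundary points of the escape locus corresponding to parabolic bifurcations of another cycle. Here Shishikura's implosion estimate applies: perturbing a parabolic cycle whose basin contains the free critical point produces hyperbolic sets of dimension $>2-\epsilon$. Lower semicontinuity of hyperbolic dimension then makes $\{\dim_H J>2-1/n\}$ open. Density near the bifurcation locus, which I would argue from the accumulation of escaping parameters on it, lets Baire give a dense $G_\delta$ of parameters with $\dim_H J=2$. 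These lie in the closure of $\mathcal E$, though, and the main obstacle is ensuring that the generic parameter still has an escaping critical point, since $\mathcal E$ is open and its boundary is the bifurcation set. To handle this, I would instead use McMullen's result that $\dim_H J=2$ for Julia sets that are not hyperbolic but have a Siegel or Cremer point. Alternatively, I would allow $V$ to be a Siegel disk: then the criterion fails as stated, so the better choice is to take the escaping critical point and a Cremer-type dimension-two restriction on $\partial V$.

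Given these difficulties, my committed plan is to use the quadratic-like restriction near $\overline V$, choose the straightening so that $\partial V$ is a Jordan curve of dimension $2$, and deduce $\dim_H J(f)=2$. The main obstacle is producing a Jordan-curve quadratic Julia set of dimension exactly $2$. The strategy I would try first is Shishikura's Baire argument restricted to the closed set of parameters on the boundary of the main cardioid with bounded-type rotation numbers. For these, Siegel disks are quasidisks, so their boundaries are Jordan curves, and with the Siegel disk as $V$ the criterion's conclusion would need to be reproved in that setting.
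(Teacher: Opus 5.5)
There is a genuine gap. The plan you finally commit to cannot work, and the intermediate routes fail for a structural reason.

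\textbf{No Jordan-curve component can carry dimension two.} By Lemma \ref{lem:attr-or-para}(b), every periodic bubble $V$ is an attracting or parabolic basin, so your Siegel-disk option is excluded outright. Directly: a quadratic polynomial with a Siegel disk $\Delta$ has at least three Fatou components ($\Delta$, $-\Delta$ and the basin of $\infty$), so its Julia set is not a Jordan curve. Moreover, bounded-type Siegel boundaries are quasicircles, and McMullen proved $\dim_H J<2$ for bounded-type Siegel quadratics, which is the opposite of what you attribute to him. For attracting or parabolic $V$, the straightening in Step 1 of the proof of Theorem \ref{thm:criterion} conjugates $\partial V$ quasiconformally to a geometrically finite Jordan-curve Julia set. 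In your quadratic-like picture this means $\sigma$ in the main cardioid or $\sigma=1/4$; roots with multiplier $e^{2\pi i p/q}$, $q\ge 2$, do not give Jordan curves. Such a Julia set has dimension $<2$, and quasiconformal maps preserve the property ``dimension $<2$'' \cite{GV73}. Every other bubble is a holomorphic preimage of one of finitely many periodic ones, so the union of all Jordan-curve components has dimension $<2$. The full dimension must therefore sit in the point components, as you briefly suspected in Step 2 before dropping the idea.

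\textbf{Your cubic family cannot supply it either.} The same holds for any map satisfying the hypotheses of Theorem \ref{thm:criterion}: all critical points lie in attracting or parabolic basins, so the map is geometrically finite and $\dim_H J<2$. In fact every cubic polynomial with a Cantor bubble Julia set is geometrically finite, with one critical point in the bubble cycle and one escaping. The Baire-generic parameters you produce lie in the bifurcation locus, outside the family, as you noticed.

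\textbf{The missing idea} is to go to higher degree and plant a non-geometrically-finite, full-dimensional Cantor piece next to a bubble. The paper does this by surgery:
\begin{enumerate}
\item Take the cubic-like restriction $f_0:U_0\to\D_{R_0}$ of a cubic polynomial whose Cantor Julia set has dimension two \cite{Yan21}.
\item Take a disjoint quadratic-like map $f_1:U_1\to\D_{R_0}$ conjugate to $\zeta\mapsto\zeta^2$.
\item Join them by a quasi-regular interpolation on $\D_R\setminus(\overline{U}_0\cup\overline{U}_1)$ with one escaping critical point, and $z\mapsto z^5$ outside $\D_R$.
\item Lemma \ref{lema:qc} straightens this to a quintic polynomial, and dimension two survives by \cite{GV73}.
\item The $f_1$-piece gives a quasicircle bubble. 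The remaining components are points by \cite{QY09}, \cite{KS09}, since the last critical point escapes.
\end{enumerate}
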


A candidate approach to prove Theorem \ref{thm:dim-2} is to follow the idea in \cite{Yan21}, where the existence of Cantor Julia sets with Hausdorff dimension two was proved.
However, the arguments there depend heavily on the parameter structure of cubic polynomials obtained in \cite{BH92}.
As an alternative, in this paper we use quasiconformal surgery to ``put" a special Cantor Julia set in the Julia sets of degree five polynomials such that they are Cantor bubble Julia sets having full Hausdorff dimension.

\medskip
The quasisymmetric uniformization of fractal sets is one of the central problems in the quasiconformal geometry (see \cite{Bon06}, \cite{Kle06}, \cite{Nta26}).
For Julia sets of rational maps, the quasisymmetric uniformizations of Cantor Julia sets, Sierpi\'{n}ski carpet Julia sets, Sierpi\'{n}ski gasket Julia sets and Cantor circle Julia sets have been studied in \cite[Proposition 15.11]{DS97}, \cite{BLM16}, \cite{QYZ19}, \cite{QY21}, \cite{LN24}, \cite{LZ25} respectively.
In this paper, we study the quasisymmetric uniformization of Cantor bubble Julia sets. Inspired by Sierpi\'{n}ski carpets, we introduce the following concept.

\begin{defi}[Cantor round bubbles]
A subset $X$ of $\EC$ is called a \textit{Cantor set with round bubbles} (\textit{Cantor round bubbles} in short) if $X$ is homeomorphic to a Cantor set with bubbles and each Jordan curve component of $X$ is a spherical circle.
\end{defi}

We provide a sufficient condition for Cantor bubble Julia sets to be quasisymmetrically equivalent to Cantor round bubbles.

\begin{thm}\label{thm:roundbubbles}
Let $f$ be a rational map having a Cantor bubble Julia set $J(f)$. If the critical orbit of $f$ does not accumulate on each Jordan curve component of $J(f)$, then $J(f)$ is quasisymmetrically equivalent to a Cantor set with round bubbles.
\end{thm}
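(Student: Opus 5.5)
The strategy is to reduce to the quasisymmetric uniformization theory of Bonk for carpet-like sets, applied not to $J(f)$ itself but to its complement structure. We first write $J(f)$ as a disjoint union of countably many Jordan curves $\{C_i\}_{i\ge 1}$ and uncountably many point components. The complement $\EC\setminus J(f)$ then consists of the unique multiply connected Fatou component $\Omega$ together with the Jordan domains $D_i$. Each $D_i$ is the simply connected complementary component of $\EC\setminus C_i$ that does not meet $\Omega$. We aim to show that the $C_i$ are \emph{uniform quasicircles} and are \emph{uniformly relatively separated}. These are exactly the hypotheses under which Bonk's uniformization theorem (and its extension to countable collections of quasidisks whose complement may have point components, in the spirit of \cite{Bon06}) produces a quasiconformal, hence quasisymmetric, map $\phi$ of $\EC$ sending every $C_i$ to a round circle. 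Since $\phi$ is a homeomorphism of the sphere, $\phi(J(f))$ is still homeomorphic to a Cantor set with bubbles, so it is a Cantor set with round bubbles.

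The dynamical input goes as follows. The assumption that the postcritical set $P(f)$ does not accumulate on any $C_i$ gives, for each $i$, a neighborhood $N_i$ of $C_i$ disjoint from $P(f)$. We then argue that there are only finitely many "dynamically primitive" curves, in the following sense. Every $C_i$ is eventually mapped by some iterate onto one of finitely many periodic or preperiodic curves $C_{1},\dots,C_{k}$. This holds because each $C_i$ is a component of $J(f)$, components map onto components, and the non-degenerate components have to eventually land on curves bounding periodic Fatou components (by Sullivan's theorem and the structure of Cantor bubbles). Moreover, every iterate $f^{n}:C_i\to C_{j}$ extends to a map on a definite annular neighborhood that is univalent, or of uniformly bounded degree, away from $P(f)$. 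By the Koebe distortion theorem, each $C_i$ is then the image of one of the finitely many base curves under a conformal map with uniformly bounded distortion on a definite neighborhood. The base curves are quasicircles: each bounds a Fatou component that is a John domain / quasidisk, since $f$ is subhyperbolic near them (no postcritical accumulation there). Hence all $C_i$ are $K$-quasicircles with a uniform $K$. Uniform relative separation,
\[
\frac{\operatorname{dist}(C_i,C_j)}{\min\{\operatorname{diam} C_i,\operatorname{diam} C_j\}}\ge\delta>0,
\]
is proved the same way. We pull back a fixed annulus of definite modulus around each base curve; the annulus is contained in $N_j\cap(\Omega\cup\text{nearby point components})$. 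Koebe transports it to an annulus of modulus bounded below around each $C_i$, separating $C_i$ from every other $C_j$. A modulus lower bound of this kind is equivalent to relative separation.

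The main obstacle is the pullback step. In particular, we must ensure that the pullback annuli around $C_i$ avoid critical values uniformly along the whole backward orbit. A curve $C_i$ may approach the postcritical set of the multiply connected component $\Omega$, and orbits can pass near critical points lying in $\Omega$. We first plan to use the hypothesis directly: since $P(f)$ does not accumulate on the finitely many base curves, we may choose the annuli to avoid $P(f)$ entirely. Then every inverse branch along the orbit is univalent on the annulus, with bounded degree only at finitely many initial steps. A second difficulty is applying Bonk's theorem when the residual set has uncountably many point components rather than being a carpet. We would handle it by showing that the point components have zero area and satisfy the hypotheses needed for the extremal-length argument. Alternatively, we would directly construct the uniformizing map via a Koebe-type countable circle domain theorem (He–Schramm) combined with the uniform quasicircle and separation estimates, which yields quasiconformality of the conformal map from $\Omega$ to a circle domain.
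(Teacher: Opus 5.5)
Your overall strategy is the paper's. You prove that the curves in $\MS(f)$ are uniform quasicircles, using finitely many base curves, conformal pullbacks with a definite-modulus collar, and Lemma~\ref{lem:uni-quasi}. You prove they are uniformly relatively separated using pulled-back annuli and Lemma~\ref{lem:QYZ}. Then you apply Bonk. The ``second difficulty'' you anticipate does not arise. Theorem~\ref{thm:Bonk} is stated for any countable family of Jordan curves bounding pairwise disjoint closed Jordan domains, which is exactly what the $C_i$ are. So no carpet structure, no area statement about the point components, and no He--Schramm argument is needed.

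\textbf{The separation step fails as written.} An annulus of definite modulus around a base curve $\partial P$ can never lie in $\Omega\cup\{\text{point components}\}$. Take a point $w$ on the boundary of a non-periodic bubble. Its iterated preimages accumulate on every point of $J(f)$, and they lie on bubble boundaries different from $\partial P$. So bubbles accumulate on $\partial P$. Hence the pulled-back annulus around $C_i$ contains infinitely many other $C_j$, and your claim that it separates $C_i$ from every other $C_j$ is false.

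The annuli only need to avoid $P(f)$; what is missing is a pairing argument deciding which annulus to use for a given pair.
\begin{itemize}
\item[(i)] Around each periodic bubble $P_k$ choose $Q_k\supset\overline{P_k}$ such that $\partial Q_k\subset\Omega$ (possible because $\partial P_k$ is a component of $J(f)$), $\overline{Q_k}\setminus P_k$ is disjoint from $P(f)$, and $Q_k$ contains no other periodic bubble.
\item[(ii)] For a bubble $W$, let $n_W$ be the first time $f^{\circ n_W}(W)=P_k$ is periodic, and let $V_W$ be the component of $f^{-n_W}(Q_k)$ containing $W$. Then $A_W:=V_W\setminus\overline{W}$ covers $Q_k\setminus\overline{P_k}$ with degree bounded independently of $W$.
\item[(iii)] Since $\partial V_W\subset\Omega$, every other closed bubble lies either in $A_W$ or outside $\overline{V_W}$.
\item[(iv)] Let $W_1\neq W_2$ with $n_{W_1}\le n_{W_2}$. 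Then $\overline{W_1}\not\subset A_{W_2}$, since otherwise the periodic bubble $f^{\circ n_{W_2}}(W_1)$ would lie in $Q_k\setminus\overline{P_k}$.
\end{itemize}
So $A_{W_2}$ separates $\overline{W_1}$ from $\overline{W_2}$. This gives the uniform lower bound on $\operatorname{mod}(\EC\setminus(\overline{W_1}\cup\overline{W_2}))$, and it is what justifies the inequality $\operatorname{mod}(A)\ge m/D$ in the proof of Lemma~\ref{lem:separated}.

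Two smaller points.
\begin{itemize}
\item[(a)] Koebe requires a univalent branch on a disk containing the base bubble. ``Univalent on the annulus'' or ``bounded degree'' is not enough. Take as base set the finitely many bubbles meeting the critical orbits, so that each first-landing map $f^{\circ n}:V\to Q_k$ is conformal, as in Lemma~\ref{uniform_quasicircle-1}.
\item[(b)] ``John domain'' does not imply ``quasidisk''. The paper gets the base quasicircles from three facts: there are no parabolic points on $\partial P$ (they would lie in $P(f)$), Ma\~n\'e's theorem, and the bounded degree condition of Lemma~\ref{lem:BD-condition}.
\end{itemize}
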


For the definition of quasisymmetric equivalence, see \S\ref{sec:quasis}.
In fact, an orientation preserving homeomorphism from $\EC$ to itself is quasisymmetric if and only if it is quasiconformal \cite[Theorem 11.14]{Hei01}.
The proof of Theorem \ref{thm:roundbubbles} relies on Bonk's criterion \cite{Bon11} (see Theorem \ref{thm:Bonk}). We will show that the collection of all Jordan curve components of $J(f)$ are uniform quasicircles and uniformly relatively separated.

We would like to mention that in the cases of Sierpi\'{n}ski carpet and Cantor circle Julia sets, any parabolic point is not allowed if they are quasisymmetrically equivalent to the round objects. However, for Cantor bubble Julia sets, we will see that some Cantor bubble Julia set with parabolic points can still be quasisymmetrically equivalent to Cantor round bubbles.

\medskip
This paper is organized as following:
In \S\ref{sec:criterion}, we give a proof of Theorem \ref{thm:criterion} based on the classification of periodic Fatou components and the straightening theory on periodic Julia components due to McMullen.
In \S\ref{sec:period}, we use Thurston's theory to construct cubic polynomials with Cantor bubble Julia sets having a superattracting cycle of any given period (Theorem \ref{thm:period}).
In \S\ref{sec:dim}, we use quasiconformal surgery to glue a Cantor Julia set and a superattracting basin to obtain Cantor bubble Julia sets with full Hausdorff dimension (Theorem \ref{thm:dim-2}).
In \S\ref{sec:quasis}, we use distortion estimates to obtain the uniform quasicircles and uniformly relatively separated properties, and Theorem \ref{thm:roundbubbles} is proved by applying Bonk's criterion.

\medskip
\noindent\textbf{Acknowledgements.}
This work was supported by NSFC (Grant Nos.\,12471073, 12571093) and NSF of Hunan Province (Grant No.\,2025JJ50049).

\section{A criterion to generate Cantor bubbles}\label{sec:criterion}

In this section we give a proof of Theorem \ref{thm:criterion} and apply the criterion to study a parabolic rational map with a Cantor bubble Julia set.

\subsection{Proof of Theorem \ref{thm:criterion}}

To study the dynamics of disconnected Julia sets, the following result is very useful.

\begin{thm}[{\cite[Theorem 3.4]{McM88}}]\label{thm:McM1988}
If a rational map $f$ has a non-singleton Julia component $J_0$ of period $p\geq 1$, then there exist a rational map $g$ and a quasiconformal mapping $\phi: \EC\rightarrow \EC$ such that $\phi(J_0)=J(g)$ and $\phi\circ f^{\circ p}=g\circ \phi$ holds on $J_0$.
\end{thm}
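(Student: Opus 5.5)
The plan is to reduce to the invariant case and then build a ``rational-like'' restriction of the dynamics near $J_0$, which we straighten by quasiconformal surgery. Replacing $f$ by $F:=f^{\circ p}$, we may assume $F(J_0)=J_0$. Since $J_0$ is a connected compact set, every component of $\EC\setminus J_0$ is simply connected. Since $J_0$ is a component of the compact set $J(F)$, components coincide with quasicomponents. Hence for every $\varepsilon>0$ we can find finitely many disjoint Jordan curves $\gamma_1,\dots,\gamma_k\subset\EC\setminus J(F)$, each lying within $\varepsilon$ of $J_0$, which separate $J_0$ from every point of $J(F)$ at distance $\ge\varepsilon$ from $J_0$. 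Only finitely many complementary components $D_1,\dots,D_k$ of $J_0$ are ``large'' (have diameter $\ge\varepsilon$ or contain critical values or points of $J(F)\setminus J_0$ far from $J_0$). The small complementary components are already Fatou components of $F$, and we leave them alone.

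The key step is to produce a domain $U\supset J_0$ bounded by finitely many Jordan curves in the Fatou set such that the component $U'$ of $F^{-1}(U)$ containing $J_0$ satisfies $\overline{U'}\subset U$ (away from the untouched Fatou components), $F:U'\to U$ is proper, and $J_0=\bigcap_n F^{-n}(U)\cap J(F)$ along the $J_0$-branch. To arrange this, first choose $U$ as the complement of closed topological disks $E_i\subset D_i$ around the finitely many non-$J_0$ pieces of $J(F)$ near the large holes, and then pull back. Since $F$ maps each complementary component of $J_0$ properly onto a complementary component, with some degree $d_i$ on its boundary, we get two facts. Preimages of the $E_i$ adjacent to $J_0$ are again disks inside the complementary components of $J_0$. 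Shrinking $E_i$ then, by compactness and expansion near $J(F)$ (or simply by choosing $E_i$ as closures of preimages), yields $U'\Subset U$ in the required sense. \emph{I expect this step to be the main obstacle.} The delicate points are holes of $J_0$ that are periodic and contain parabolic or attracting behaviour near their boundary, and ensuring that the boundary curves can be taken in the Fatou set. My first attempt would be to take the boundary curves as level curves of a Green-type or Koenigs-type coordinate in the relevant Fatou components, and to use the separating curves from the quasicomponent argument when a hole contains other Julia components.

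Given such a rational-like map, we do the surgery. On each complementary disk $E$ of $U$ whose preimage disk $E'$ (a complementary disk of $U'$) maps onto $E$ with degree $d_E$, we redefine $F$ to be a model map. Concretely, we conformally identify $E'$ and $E$ with round disks and use $z\mapsto z^{d_E}$, interpolated quasiconformally on the annuli $E\setminus E'$. Doing this for all holes gives a quasiregular branched cover $G:\EC\to\EC$ which agrees with $F$ on $U'$. Each orbit passes through the non-holomorphic region at most once before landing in the superattracting model disks. Therefore pulling back the standard structure on the model disks gives a $G$-invariant Beltrami coefficient $\mu$ with $\|\mu\|_\infty<1$. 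The measurable Riemann mapping theorem then gives a quasiconformal map $\phi$ such that $g:=\phi\circ G\circ\phi^{-1}$ is rational.

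Finally, we identify $J(g)$. The set $\phi\bigl(\bigcup_n G^{-n}(\text{model disks})\bigr)$ lies in superattracting basins of $g$, so $J(g)\subset\phi(K)$, where $K=\bigcap_n G^{-n}(\overline U)$ is the non-escaping set of the rational-like map. The points of $K\setminus J_0$ lie in the small Fatou holes of $J_0$, which remain Fatou for $g$ because $\phi$ conjugates there. Conversely, repelling periodic points of $F$ in $J_0$ are dense in $J_0$ and stay repelling for $g$. Hence $\phi(J_0)=J(g)$ and $\phi\circ f^{\circ p}=g\circ\phi$ on $J_0$, as claimed.
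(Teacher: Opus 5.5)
The paper gives no proof of this statement; it is quoted from McMullen \cite{McM88}. Your sketch therefore has to stand on its own, and it has a genuine gap exactly at the step you flag. Everything afterwards (the surgery and the identification of $J(g)$) presupposes an open set $U\supset J_0$ with three properties. Its complement is finitely many closed disks $E_i$ with boundaries in the Fatou set. The component $U'$ of $F^{-1}(U)$ containing $J_0$ satisfies $\overline{U'}\subset U$. No point of $J(F)\setminus J_0$ keeps its whole orbit in $U$. You never construct such a $U$, and in general it does not exist.

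Suppose $J_0$ contains a parabolic fixed point $q$ of $F$ with multiplier $1$ whose immediate basin $\Omega$ lies in a complementary component of $J_0$ and has points of other Julia components on its boundary. An example is a bubble $\partial V$ passing through the parabolic point of a completely invariant parabolic basin; the hypotheses of Theorem~\ref{thm:criterion} do not exclude this. For small $\varepsilon$, some $x\in\partial\Omega\setminus J_0$ must lie in some $\mathrm{int}\,E_i$, so $\Omega\not\subset U$: otherwise $\mathrm{int}\,E_i$ would be a neighbourhood of $x$ missing $\Omega$.

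Now take an attracting petal $P\subset\Omega$ at $q$ with $F(P)\subset P$ and $P\subset U\cap U'$; this is possible because both sets contain a neighbourhood of $J_0$. Let $P_k$ be the component of $F^{-k}(P)$ containing $P$; these increase and exhaust $\Omega$. For the least $k$ with $P_k\not\subset U$, we have $P_k\subset F^{-1}(P_{k-1})\subset F^{-1}(U)$. Since $P_k$ is connected and contains $P\subset U'$, we get $P_k\subset U'$, so $U'\not\subset U$.

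Likewise, suppose exactly one boundary component of a Herman ring lies in $J_0$. Then the curves of $\partial U$ separating it from the other boundary component are essential curves in the ring. Their preimages under the rotation dynamics cannot lie strictly on the $J_0$-side of them. So the nested set-up with superattracting models cannot work in general, and Green or Koenigs level curves only help for attracting collars. These complementary disks need a different mechanism, namely models matching parabolic or rotation collar dynamics, and that is precisely what the proposal leaves open.

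Several intermediate claims are also wrong or unjustified.
\begin{enumerate}
\item $F$ does not map each complementary component of $J_0$ properly onto one. $J_0$ is only one of the at most $\deg F$ components of $F^{-1}(J_0)$, and the others lie in complementary components of $J_0$. For a Cantor bubble set, for instance, the outer complementary disk of $\partial V$ contains the boundaries of the other components of $f^{-1}(V)$. So only a collar of each hole maps into a single hole.
\item The claim ``small holes are Fatou components'' is unjustified. A hole whose collar maps into a hole containing Julia points cannot be a Fatou component, whatever its diameter.
\item Finiteness of the set of holes of diameter $\ge\varepsilon$ is not automatic for a continuum.
\item In the last step, density of repelling periodic points of $F$ \emph{inside} $J_0$ is not available a priori; it is a consequence of the theorem.
\item The inclusion $J(g)\subset\phi(J_0)$ rests on the unproved fact that no point of $J(F)\setminus J_0$ stays in $U$ forever.
\item With $\overline{U'}\subset U$, each complementary disk $E$ of $U$ lies inside a complementary disk $E'$ of $U'$, so the interpolation annuli are $E'\setminus E$, not $E\setminus E'$.
\item The claim that each orbit passes through the non-holomorphic region at most once needs the model to map those annuli into its holomorphic part.
\end{enumerate}
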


This result implies that one may extract rational maps with connected Julia sets from the ones with disconnected Julia sets.

\begin{proof}[Proof of Theorem \ref{thm:criterion}]
\textit{Step 0}. Let $U$ and $V$ be two invariant Fatou components of $f$ such that $U$ is completely invariant while $V$ is not.
Since $U$ is completely invariant, we have $\partial U =J(f)$ and each component of $\EC\setminus\overline{U}$ is a simply connected Fatou component.

\medskip
\textit{Step 1}. \textit{Two straightenings}. Let $J_V$ be the connected component of $J(f)$ containing $\partial V$. Since $V$ is invariant, we have $f(J_V) = J_V$.
By Theorem \ref{thm:McM1988}, there exist a rational map $g$ with $\deg(g)\leq\deg(f)$  and a quasiconformal mapping $\phi: \widehat{\mathbb{C}} \to \widehat{\mathbb{C}}$ such that
\begin{equation}
\phi(J_V)=J(g) \text{\quad and\quad}\phi\circ f=g\circ \phi \text{ on } J_V.
\end{equation}

Let $K_V$ be the connected component of $\EC\setminus U$ containing $V$. Then $f(K_V) = K_V$. Let $V_0$ be any component of $K_V\setminus J_V$. Then $V_0$ is a simply connected Fatou component of $f$. Since the quasiconformal mapping $\phi$ is orientation-preserving, it follows that $\phi(V_0)$ is a Fatou component of $g$ and $\phi(V_0)\neq\EC\setminus \phi(K_V)$. Thus $\EC\setminus \phi(K_V)$ is a completely invariant Fatou component of $g$. Note that $\deg(f:V_0\to f(V_0))=\deg(g:\phi(V_0)\to \phi(f(V_0)))$. Since $U\cup V$ contains all critical values of $f$, it follows that $\phi(V)\cup(\EC\setminus \phi(K_V))$ contains all critical values of $g$. Hence $g$ is a geometrically finite rational map having a connected Julia set.

By a quasiconformal surgery procedure (see \cite[p.\,106, Theorem 5.1]{CG93} and \cite[Proposition 6.8]{McM88}), there exist a rational map $h$ with $\deg(h)=\deg(g)$ and a quasiconformal mapping $\psi: \widehat{\mathbb{C}} \to \widehat{\mathbb{C}}$ such that
\begin{equation}
\psi(J(g))=J(h) \text{\quad and\quad}\psi\circ g=h\circ \psi \text{ on } J(g),
\end{equation}
and moreover, all critical values of $h$ are contained in the invariant Fatou components
\begin{equation}
V':=\psi(\phi(V)) \text{\quad and\quad} V'':=\psi(\EC\setminus \phi(K_V))
\end{equation}
of $h$, and each of them contains exactly one critical value of $h$. Therefore, $h$ is a rational map having exactly two critical values. By the Riemann-Hurwitz formula, $h$ has exactly two critical points (without counting multiplicity) and they have the same local degree. This implies that $V'$ and $V''$ are completely invariant under $h$ and hence $h$ has exactly two Fatou components, which are attracting or parabolic basins. By \cite{Ste97}, $J(h)$ is a Jordan curve. Therefore, $J(g)$ and $J_V$ are Jordan curves.

\medskip
\textit{Step 2}. \textit{Countably many Jordan curves}.
Note that $\partial V = J_V$ is a Jordan curve component. Since $V$ is not completely invariant, we conclude that $J(f)$ is disconnected and $f^{-1}(V)\setminus V\neq\emptyset$.
By the assumption that $U\cup V$ contains all critical values of $f$, it follows that $\partial V$ is disjoint with the critical orbit of $f$. Hence $\bigcup_{n\geq 0}f^{-n}(V)$ consists of countably many Jordan domains with pairwise disjoint boundaries.

\medskip
\textit{Step 3}. \textit{Rest components are points}.
Let $J_0$ be a Julia component of $f$ which is not a preimage of $\partial V$. In the following we prove that $J_0$ is a point.
Let $K_0$ be the component of $\EC\setminus U$ containing $J_0$. Then
\begin{equation}\label{equ:f-orbit-empty}
f^{\circ n}(K_0)\cap(U\cup\overline{V})=\emptyset \text{\quad for all } n\geq 0.
\end{equation}
Note that $f$ is geometrically finite. If $J_0$ is not a point, according to \cite[Theorem 1.2]{PT00}, either $J_0$ is a Jordan curve, or $J_0$ is eventually periodic.
If $J_0$ is a Jordan curve, then $K_0$ is a closed Jordan disk. By Sullivan's no wandering domain theorem, there exists $n\geq 0$ such that $f^{\circ n}(K_0)$ is periodic. Therefore, if $J_0$ is not a point, then there exists $n_0\geq 0$ such that $f^{\circ n_0}(J_0)$ and $f^{\circ n_0}(K_0)$ are periodic in both cases.
By Theorem \ref{thm:McM1988} and the same arguments above, $f^{\circ n_0}(K_0)$ intersects with the critical orbit of $f$.
By \eqref{equ:f-orbit-empty}, this is a contradiction since the invariant set $U\cup V$ contains all critical values of $f$. Hence $J_0$ is a point.

\medskip
\textit{Step 4}. \textit{The conclusion}.
Note that any disconnected Julia set consists of uncountable many Julia components (see \cite[Corollary 4.15]{Mil06}). It follows that $J(f)$ is the disjoint union of countably many Jordan curves and uncountably many points, and its complement $\EC\setminus J(f)$ contains exactly one multiply connected component $U$. Thus $J(f)$ is a Cantor set with bubbles.
\end{proof}

\begin{rmk}
In Step 3 of the proof of Theorem \ref{thm:criterion}, we can also apply the result of the generalized Branner-Hubbard conjecture \cite{Zha10} to conclude that the Julia components which are not iterated to $\partial V$ are points.

If the third condition in Theorem \ref{thm:criterion} is not satisfied, then $J(f)$ may be connected. Indeed, for example, if a cubic polynomial $f$ has a fixed superattracting basin $V$ and the other critical point of $f$ is not in $V$ but its forward orbit intersects with $V$, then $J(f)$ is connected.
\end{rmk}

We list some basic properties of the rational maps with Cantor bubble Julia sets.

\begin{lem}\label{lem:attr-or-para}
Let $f $ be a rational map with a Cantor bubble Julia set. Then
\begin{enumerate}
\item $\deg(f)\geq 3$ and the critical points in $J(f)$ are contained in point components of $J(f)$; and
\item $f$ has exactly one completely invariant Fatou component and all periodic Fatou components of $f$ are attracting or parabolic.
\end{enumerate}
\end{lem}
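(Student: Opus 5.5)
I will first identify the multiply connected Fatou component and show it is completely invariant, then handle the periodic Fatou components, and finally degree and critical points.

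Let $J=J(f)$ be a Cantor bubble Julia set and let $W$ be the unique multiply connected component of $\EC\setminus J$.

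\textbf{Complete invariance of $W$.} Since $f$ maps Fatou components properly onto Fatou components, $f(W)$ is a Fatou component. The components of $f^{-1}(W)$ are also Fatou components. I claim each of them is multiply connected. If some component $W'$ were simply connected, then $f:W'\to W$ would be a proper map from a disk onto a domain. Its image is then simply connected, because the image of the fundamental group surjects onto a finite-index subgroup, and the fundamental group of a multiply connected domain has no nontrivial finite-index subgroup that is trivial. This contradicts the choice of $W$. So every component of $f^{-1}(W)$ is multiply connected, hence equals $W$. Thus $f^{-1}(W)=W$, and $W$ is completely invariant.

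In particular $\partial W=J$, and every other Fatou component is simply connected. Consequently $f$ has exactly one completely invariant Fatou component: a second one $W_2$ would also satisfy $\partial W_2=J$, and the Jordan curve components of $J$ would then have to bound both.

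\textbf{Periodic Fatou components.} By the Sullivan classification, a periodic Fatou component is attracting, parabolic, Siegel or Herman.
\begin{itemize}
\item Herman rings are doubly connected, so one would have to be $W$. But $\partial W=J$ is disconnected with uncountably many components, while a Herman ring has exactly two boundary components. So there are no Herman rings.
\item Suppose $D$ is a Siegel disk of period $p$. Then $D$ is simply connected, so $D\ne W$ and $\partial D\subset J$ is connected. Hence $\partial D$ lies in a single Julia component, which cannot be a point and is therefore a Jordan curve $\gamma$. Since $f^{\circ p}(D)=D$, $\gamma$ is a periodic Julia component. Apply Theorem~\ref{thm:McM1988} to $\gamma$: there is a rational map $g$ with $J(g)$ a Jordan curve, quasiconformally conjugate to $f^{\circ p}$ on $\gamma$. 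The quasiconformal map $\phi$ sends the two complementary disks of $\gamma$ to the two Fatou components of $g$. As in Step~1 of the proof of Theorem~\ref{thm:criterion}, $\phi(D)$ is an invariant Fatou component of $g$ and the other component is completely invariant. The main obstacle is ruling out that $\phi(D)$ is a Siegel disk of $g$. For this I would use that the complementary side of $\gamma$ meets $W$, so the corresponding component for $g$ is a completely invariant, hence attracting or parabolic, basin. Then $g$ has one completely invariant component, and $J(g)$ is a Jordan curve whose other side $\phi(D)$ is fixed. Hence $\phi(D)$ is also completely invariant, so $g$ has two completely invariant Fatou components. By the classical result that two completely invariant components force all Fatou components to be attracting or parabolic basins (no Siegel disk can have a completely invariant complement in degree $\ge2$), this contradicts $\phi(D)$ being Siegel. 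So every periodic Fatou component is attracting or parabolic.
\end{itemize}

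\textbf{Degree and critical points.} Let $\gamma$ be a Jordan curve component and $D$ the disk it bounds that is disjoint from $W$. A critical point $c$ on $\gamma$ would make $f(\gamma)$ locally non-Jordan near $f(c)$, since $f$ is branched there. More precisely, $f$ maps $\gamma$ onto a Julia component $f(\gamma)$, which must be a Jordan curve because it contains the image of an arc. The local degree at $c$ is at least $2$ and the images of the sides are prescribed, with one side in $W$ and the other in $D$ mapping into $f(D)$. Using $f^{-1}(W)=W$ locally, $W$ near $c$ has at least two sectors mapping to the single $W$-side of $f(\gamma)$. These sectors are separated by sectors of $D$, but $D$ is one side of a Jordan curve. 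This is a contradiction, so critical points in $J$ lie in point components.

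For $\deg f\ge3$: if $\deg f=2$, then $W$ is completely invariant. Riemann–Hurwitz gives $\chi(W)=2\chi(W)-\#\mathrm{crit}(W)$, so $\chi(W)$ equals the number of critical points in $W$, which is at most $2$. Since $W$ has infinitely many boundary components, $\chi(W)=-\infty$, which is impossible. So $\deg f\ge3$.
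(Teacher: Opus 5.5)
Your treatment of the complete invariance of $W$, the Herman rings, the periodic bubbles via Theorem~\ref{thm:McM1988}, and the ``figure eight'' obstruction for critical points follows the paper's route. The argument for $\deg(f)\geq 3$, however, is wrong.

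Since $\chi(W)=-\infty$, Riemann--Hurwitz for $f:W\to W$ reads $-\infty=2\cdot(-\infty)-\#(\Crit(f)\cap W)$. This holds trivially, so you cannot cancel to get $\chi(W)=\#(\Crit(f)\cap W)$. Your argument also never uses the bubbles, so it would prove that no quadratic map has an infinitely connected completely invariant Fatou component. That is false: for $z\mapsto z^2+c$ with $c$ outside the Mandelbrot set, the basin of $\infty$ is exactly such a component.

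The paper instead cites Milnor and Yin: the Julia set of a quadratic rational map is either connected or a Cantor set. For a self-contained argument, apply Riemann--Hurwitz only at finite stages:
\begin{enumerate}
\item Exhaust $W$ by the nested components $N_k$ of $f^{-k}(N_0)$ containing the attracting point, where $N_0$ is a small forward invariant disk (or an attracting petal in the parabolic case).
\item Suppose $W$ contains at most one (simple) critical point, and $N_k$ is simply connected. Then $f:N_{k+1}\to N_k$ has degree $d_k\in\{1,2\}$ with $\delta_k\le d_k-1$ critical points, so $\chi(N_{k+1})=d_k-\delta_k\geq 1$. Hence $N_{k+1}$ is simply connected, and so is $W=\bigcup_k N_k$, a contradiction.
\item So both critical points lie in $W$ and every bubble is mapped conformally. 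A periodic bubble then has a degree-one return map with no critical point in its cycle, so it is a Siegel disk, which your periodic-component part excludes.
\end{enumerate}

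A smaller point concerns the Siegel case. McMullen's conjugacy holds only on $\gamma$, so ``$\phi(D)$ being Siegel'' is not available as stated. What transfers is the boundary degree.
\begin{enumerate}
\item If $D$ is a Siegel disk, then $f^{\circ p}|_\gamma$ is injective, hence so is $g|_{J(g)}$.
\item On the other hand, both Fatou components of $g$ are completely invariant.
\item $J(g)$ is a Jordan curve, so it contains no critical points, and $\deg(g)\ge 2$.
\item Therefore $g|_{J(g)}$ is $\deg(g)$-to-one, contradicting injectivity.
\end{enumerate}
The paper is equally terse at this step. Finally, your uniqueness argument for the completely invariant component is vague. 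It follows at once because a second one would be simply connected and hence have connected boundary, while $J(f)$ is disconnected.
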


\begin{proof}
(a) Note that the Julia set of any quadratic rational map is either connected or totally disconnected (i.e., a Cantor set, see \cite{Mil93}, \cite{Yin92}). Hence $\deg(f)\geq 3$.
If a Jordan curve Julia component contains a critical point, then there exists a ``Figure 8" structure near this critical point in $J(f)$, which is a contradiction.

\medskip
(b) Let $U$ be the only multiply connected Fatou component of $f$. It is infinitely connected and hence is completely invariant since other Fatou components are simply connected. In particular, $U$ is an attracting or a parabolic basin. Obviously, $f$ has no Herman rings since $f$ has no doubly connected Fatou components.

By Sullivan's no wandering domain theorem, each component of $\EC\setminus \overline{U}$ is a Jordan Fatou component which is eventually periodic. Let $W$ be a periodic component of $\EC\setminus\overline{U}$ with period $p\geq 1$. By Theorem \ref{thm:McM1988}, there exist a rational map $g$ and a quasiconformal mapping $\phi: \widehat{\mathbb{C}} \to \widehat{\mathbb{C}}$ such that
\begin{equation}
\phi(\partial W)=J(g) \text{\quad and\quad}\phi\circ f^{\circ p}=g\circ \phi \text{ on } \partial W.
\end{equation}
Thus $J(g)$ is a Jordan curve and $g$ has no Siegel disks. Therefore, $f$ has no rotation domains and all periodic Fatou components of $f$ are attracting or parabolic.
\end{proof}

\subsection{Specific examples I}

By Lemma \ref{lem:attr-or-para}, the completely invariant Fatou component of a rational map with a Cantor bubble Julia set is either attracting or parabolic.
In this subsection, we apply Theorem \ref{thm:criterion} to study a cubic rational map such that it has a completely invariant parabolic basin and a Cantor bubble Julia set.

\begin{exam}[See Figure \ref{Fig:Julia-Cantor-bubble-attr-para-2}]\label{exam:bubble-para}
Consider the cubic rational family
\begin{equation}
f_a(z)=b\,\frac{z^3+cz+d}{z+a},
\end{equation}
where
\begin{equation}
b=\frac{(1+a)^2}{2+3a}, \quad c=\frac{1+2a}{(1+a)^2} \text{\quad and\quad} d=\frac{a(1+2a)}{(1+a)^2}
\end{equation}
with $a\in\C\setminus\{0,-1,-\frac{2}{3}\}$. Then $f_a$ has a superattracting fixed point $\infty$ and a parabolic fixed point $1$ with multiplier $1$.
If $-1-a>0$ is small enough, then $J(f_a)$ is a Cantor set with bubbles.
\end{exam}

\begin{figure}[!htpb]
  \setlength{\unitlength}{1mm}
  \centering
  \includegraphics[width=0.7\textwidth]{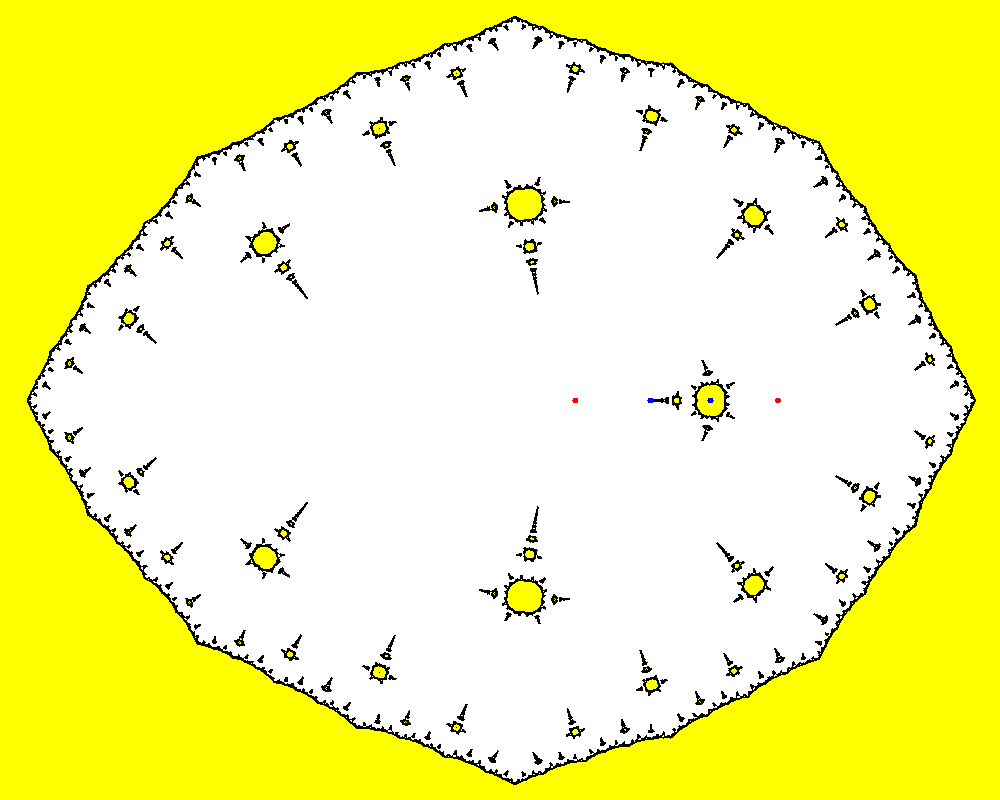}
  \put(-43.5,35){$0$}
  \put(-35.5,35){$1$}
  \put(-26,35){$-\frac{3a}{2}$}
  \put(-32,33){$-a$}
  \caption{A Cantor bubble Julia set of $f_a$ in Example \ref{exam:bubble-para}, where $f_a$ has a parabolic fixed point $1$ whose parabolic basin is infinitely connected, a double critical point $0$, a simple critical point $-\frac{3a}{2}$ and a superattracting fixed point $\infty$. Here $a=-1.8$ is chosen such that some bounded simply connected Fatou components are visible.}
  \label{Fig:Julia-Cantor-bubble-attr-para-2}
\end{figure}

\begin{proof}
By direct calculations, $f_a$ has a superattracting fixed point $\infty$, a parabolic fixed point $1$ with multiplier $1$, a double critical point $0$ and a simple critical point $-\frac{3a}{2}$.
If $a<-1$ but sufficiently close to $-1$, we will verify the following properties:
\begin{enumerate}
\item $f_a$ is strictly increasing on $[0,1]$ and $f_a([0,1])=[f_a(0),1]\subset(0,1]$;
\item $f_a(z)\neq z$ for all $z\in [0,1)$; and
\item $0<f_a(-\frac{3a}{2})<1$.
\end{enumerate}
These properties imply that the critical values $f_a(0)$ and $f_a(-\frac{3a}{2})$ are contained in the immediate parabolic basin $U$ of $1$. Since $f_a$ has local degree three at $0$, hence $U$ is completely invariant and $0,-\frac{3a}{2}\in U$.
Let $V$ be the Fatou component of $f_a$ containing $\infty$. We claim that $V$ cannot be completely invariant. Indeed, note that $f_a$ is a real rational map for real $a$. Every Fatou component of $f_a$ is symmetric about the real axis if it intersects with the real axis. If the pole $-a$ is contained in $V$, then $-\frac{3a}{2}$ and $0$ are separated by $V$, which is a contradiction. Hence $-a\not\in V$ and $V$ is not completely invariant.
By Theorem \ref{thm:criterion}, $J(f_a)$ is a Cantor set with bubbles.

\medskip
It is sufficient to verify the three properties mentioned above. A direct calculation shows that
\begin{equation}
f_a(0)=\frac{1+2a}{2+3a} \text{\quad and\quad} f_a'(z)=b\,\frac{z^2(2z+3a)}{(z+a)^2}.
\end{equation}
If $z\in(0,1)$ and $a<-1$ but sufficiently close to $-1$, then $f_a(0)\in(0,1)$ and $f_a'(z)>0$. Hence $f_a$ is strictly increasing on $[0,1]$ and $f_a([0,1])=[f_a(0),1]\subset(0,1]$.

By a direct calculation, we have
\begin{equation}
f_a(z)-z=\frac{(z-1)^2((a+1)^2 z+2a^2+a)}{(3a+2)(z+a)}
\end{equation}
and
\begin{equation}
f_a(-\tfrac{3a}{2})=1+\tfrac{1}{4}(a+1)(3a+2)(3a-1).
\end{equation}
Hence if $z\in(0,1)$ and $a<-1$ but sufficiently close to $-1$, then $f_a(z)\neq z$ and $0<f_a(-\frac{3a}{2})<1$. The proof is complete.
\end{proof}

\section{More Cantor bubble Julia sets}\label{sec:period}

Thurston's topological characterization theorem is an effective tool to construct rational maps with prescribed dynamics.
The original theorem \cite{DH93} was proved for postcritically finite maps, and the theory was later extended to certain topological polynomials \cite{BFH92} and some postcritically infinite cases \cite{CT11}, \cite{ZJ09}.
In this section we use some of these theories to construct more Cantor bubble Julia sets.

\subsection{Topological characterization of semi-rational maps}

Let $f:\EC\to\EC$ be a branched covering of $\deg(f)\geq 2$ and let $\text{Crit}(f)$ be the set of all critical points of $f$. The \textit{postcritical set} of $f$ is
\begin{equation}
P(f):=\overline{\bigcup_{n\geq 1 } f^{\circ n}\big(\Crit(f)\big)}.
\end{equation}
The map $f$ is \textit{postcritically finite} if $P(f)$ is a finite set (i.e., every critical point of $f$ has a finite orbit).
The map $f$ is a \textit{semi-rational map} (see \cite{CT11}, \cite{ZJ09}) if the accumulation set $P'(f)$ of $P(f)$ is finite (or empty), and in the case $P'(f)\neq\emptyset$, the map $f$ is
holomorphic in a neighborhood of  $P'(f)$ and every periodic point in $P'(f)$ is either attracting or superattracting.

\begin{defi}[c-equivalent]
Two semi-rational maps $f$ and $g$ are called \textit{c-equivalent}, if there exist a pair $(\phi, \psi)$ of homeomorphisms of $\EC$ to itself and a neighborhood $U_0$ of $P'(f)$ such that:
\begin{enumerate}
\item $\phi \circ f=g \circ \psi$;
\item $\phi$ is holomorphic in $U_0$;
\item $\phi$ and $\psi$ are equal on $P(f)$, thus on a neighborhood of $P'(f)$; and
\item $\phi$ and $\psi$ are isotopic to each other rel $P(f) \cup \overline{U}_0$.
\end{enumerate}
\end{defi}

A \textit{multicurve} $\Gamma$ on $(\EC,P(f))$ is a finite set of disjoint, non-homotopic simple closed curves on $\EC\setminus P(f)$ such that both components of the complement of every curve $\gamma\in\Gamma$ contains at least two points of $P(f)$. Such curves are called \textit{non-peripheral}. A multicurve $\Gamma$ is called \textit{$f$-stable} if for every $\gamma\in\Gamma$, each component of $f^{-1}(\gamma)$ is either homotopic rel $P(f)$ to an element of $\Gamma$ or peripheral.
An $f$-stable multicurve $\Gamma$ is called a \textit{Thurston obstruction} of $f$ if the leading eigenvalue $\lambda_\Gamma$ of the Thurston transformation matrix of $\Gamma$ satisfies $\lambda_\Gamma\geq 1$, which is in fact closely related to Gr\"{o}tzsch's inequality on moduli of annuli. See \cite{DH93} or \cite[\S10.1]{Hub16} for precise definitions.

\begin{thm}[{\cite{CT11} and \cite{ZJ09}}]\label{thm:Cui-Tan}
Let $f$ be a semi-rational map with $P'(f) \neq \emptyset$. Then the map $f$ is c-equivalent to a rational map $R$ if and only if $f$ has no \emph{Thurston obstructions}.
In this case the rational map $R$ is unique up to M\"{o}bius conjugations.
\end{thm}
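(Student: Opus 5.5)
The plan is to follow Thurston's original strategy, as in \cite{DH93}, and adapt it to an infinite postcritical set. The new ingredient is that the rational map must be holomorphic near $P'(f)$, where the dynamics is attracting. The proof splits into three parts: necessity of the no-obstruction condition, existence of $R$ via a pullback iteration on a Teichm\"uller space, and uniqueness.

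\emph{Necessity and uniqueness.} Suppose $f$ is c-equivalent to a rational map $R$ and $\Gamma$ is an $f$-stable multicurve. Transport $\Gamma$ by $\phi$ to a multicurve for $R$ on $(\EC,P(R))$. Take an extremal family of disjoint annuli homotopic to the curves of $\Gamma$ in $\EC\setminus P(R)$ and pull it back by $R$. Gr\"otzsch's inequality then gives that the transformation matrix satisfies $\lambda_\Gamma<1$, exactly as in McMullen's proof of the rational case. One must check that the annuli can be chosen to avoid a neighbourhood of $P'(R)$. This holds because every curve in $\Gamma$ is non-peripheral and $P'(R)$ lies in attracting basins, so the accumulating postcritical points sit in small round discs around the finitely many attracting cycles; this is where we use that $P'(R)$ is finite. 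Uniqueness will come out of the existence proof: any two rational realizations give fixed points of the pullback map $\sigma_f$ below, and these must coincide. Alternatively, two c-equivalent rational maps are conjugate on a neighbourhood of $P'$ by a holomorphic map, and one promotes this to a global M\"obius conjugacy by the standard lifting argument together with a no-invariant-line-field statement on the Julia set.

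\emph{Existence.} First I would normalize $f$ near $P'(f)$. Fix a neighbourhood $U_0$ of $P'(f)$ consisting of finitely many discs on which $f$ is holomorphic and which are mapped into themselves, using linearizing (or B\"ottcher) coordinates at the attracting cycles. Only finitely many points of $P(f)$ lie outside $U_0$.

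Next, define $\mathcal T_f$ to be the Teichm\"uller space of complex structures on $\EC$ that agree with the standard one on $U_0$, marked up to isotopy rel $P(f)\cup\overline{U}_0$. This space is essentially the Teichm\"uller space of a finite-type bordered surface. Pulling back complex structures by $f$ defines a map $\sigma_f:\mathcal T_f\to\mathcal T_f$, which is well defined because $f^{-1}(U_0)\supset U_0$ and $f$ is holomorphic there. A fixed point of $\sigma_f$ gives a complex structure invariant under $f$, and hence, by uniformization, a rational map $R$ together with the pair $(\phi,\psi)$ required for c-equivalence.

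The map $\sigma_f$ is weakly contracting for the Teichm\"uller metric, and some iterate is uniformly contracting on compact sets because the relevant coboundary operator is not surjective. The main obstacle is therefore to show that an orbit $\tau_n=\sigma_f^{\circ n}(\tau_0)$ stays in a compact part of moduli space.

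To control the orbit, I would follow Thurston's short-curve argument. If $\tau_n$ degenerates, there are curves whose hyperbolic length in $\EC\setminus P(f)$ tends to zero. Collaring lemmas and the bounded-distance property $d(\tau_n,\tau_{n+1})\le d(\tau_0,\tau_1)$ show that the short curves, for large $n$, form an $f$-stable multicurve with $\lambda_\Gamma\ge 1$. That multicurve is a Thurston obstruction, which is a contradiction.

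The delicate point is to rule out degenerations concentrating inside $U_0$, where infinitely many marked points live. Here the fixed holomorphic structure on $U_0$ should help. Points of $P(f)\cap U_0$ have fixed conformal positions, so no curve contained in $U_0$ can become short. A curve that separates $U_0$ from the remaining finitely many points of $P(f)$ is then either peripheral in the appropriate sense or controlled by the modulus of fixed annuli around the attracting cycles. I expect this compactness step, and in particular the book-keeping near $P'(f)$, to be the hardest part of the argument.
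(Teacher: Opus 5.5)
The paper gives no proof of this statement; it quotes it from Cui--Tan and Zhang--Jiang, so your outline has to stand on its own. The architecture is the right kind of strategy: a pullback map $\sigma_f$ on a Teichm\"uller space with the structure frozen near $P'(f)$, a fixed point giving $R$, and short curves giving an obstruction. However, the existence part has a genuine gap exactly where the theorem goes beyond Thurston's.

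First, your $\mathcal T_f$ is not of finite type. The structures are standard on $U_0$ and the marking is rel $P(f)\cup\overline U_0$, so a point of $\mathcal T_f$ remembers the conformal embedding $\phi|_{U_0}$ modulo M\"obius maps. These embeddings range over an infinite-dimensional family: $\mathcal T_f$ is the Teichm\"uller space of the bordered surface $\EC\setminus(\overline U_0\cup P(f))$ with its boundary fixed pointwise. Consequently, ``the orbit stays in a compact part of moduli space'' and ``some iterate is uniformly contracting on compact sets because the coboundary operator is not surjective'' do not apply as written, since both are the finite-dimensional Douady--Hubbard arguments. You need a substitute. For instance, use that the normalized $\phi_n$ are univalent on $U_0$ (Koebe, normal families) to show that the rational maps $R_n$ with $\phi_n\circ f=R_n\circ\phi_{n+1}$ stay in a compact family. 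Then derive a uniform bound $\|d\sigma_f\|\le\delta<1$ along the orbit, using $f(\overline U_0)\subset U_0$ so that $f_*$ does not see quadratic differentials near $\partial U_0$. Alternatively, reduce to a finite-dimensional problem by a surgery inside $U_0$.

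Second, the short-curve step is only asserted. With $P(f)$ infinite there is no a priori bound on the number of disjoint, pairwise non-homotopic, non-peripheral curves. So ``the short curves form an $f$-stable multicurve'' already presupposes the control near $P'(f)$ that you defer. Your stated reason for that control also does not work as written. Freezing the structure on $U_0$ bounds the hyperbolic length of $\phi_n(\gamma)$, for $\gamma\subset U_0$, in $\EC\setminus\phi_n(P(f))$ only from \emph{above}: by monotonicity of hyperbolic metrics it is at most the length in $\phi_n(U_0\setminus P(f))$. It gives no bound from below. A lower bound needs a distortion argument that is uniform over the infinitely many such classes: Koebe on $\phi_n|_{U_0}$ plus a cross-ratio bound using marked points of $U_0$ on both sides of $\gamma$. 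Moreover, curves enclosing whole components of $U_0$ together with points of $P(f)\setminus U_0$ can a priori become short. One must show that they fit into an $f$-stable multicurve with $\lambda_\Gamma\ge 1$. This bookkeeping near $P'(f)$ is the real content of the theorem, and the proposal names it rather than supplies it.

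A smaller point on necessity: Gr\"otzsch alone gives only $\lambda_\Gamma\le 1$. Strictness comes from analyzing the equality case, as in McMullen's theorem. The hypotheses that $P(f)$ is infinite and $P'(f)$ is finite and attracting are what exclude the equality cases (Latt\`es-type maps, rotation domains). Strictness has nothing to do with keeping annuli away from $P'(R)$.
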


\begin{defi}[Levy cycle]
A multicurve $\Gamma=\{ \gamma_0, \dots, \gamma_{n-1},\gamma_n = \gamma_0 \}$ is a \textit{Levy cycle} if for each $i = 0, \dots, n-1$, at least one component $\gamma'$ of $f^{-1}(\gamma_{i+1})$ is homotopic rel $P(f)$ to $\gamma_i$ and $f: \gamma' \to \gamma_{i+1}$ has degree 1.

The multicurve $\Gamma$ is a \textit{degenerate Levy cycle} if it is a Levy cycle together with disjoint disks $D_i$ bounded by $\gamma_i$ for each $i$, such that one component of $f^{-1}(D_{i+1})$ is a disk $D_i'$ satisfying $P(f)\cap D_i'=P(f)\cap D_i$ and $f:D_i'\to D_{i+1}$ is a homeomorphism.
\end{defi}

Levy cycles are special Thurston obstructions \cite[Theorem 5.4]{BFH92}.
A branched covering $f:\EC\to\EC$ of $\deg(f)\geq 2$ is said to be a \textit{topological polynomial} if $f^{-1}(\infty)=\{\infty\}$.
The following result was proved by Bielefeld, Fisher and Hubbard (see also \cite[Theorem 10.3.8]{Hub16}).

\begin{thm}[{\cite[Theorem 5.5]{BFH92}}]\label{thm:BFH92}
If $f$ is a topological polynomial having a Thurston obstruction $\Gamma$, then $f$ has a degenerate Levy cycle $\Gamma'\subset\Gamma$.
\end{thm}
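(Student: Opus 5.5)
The plan is to exploit the fact that for a topological polynomial every non-peripheral curve $\gamma\subset\EC\setminus P(f)$ has a canonical ``inside'': the disk $D_\gamma$ bounded by $\gamma$ that does not contain $\infty$. Since $f^{-1}(\infty)=\{\infty\}$, the complement of $f^{-1}(D_\gamma)$ is connected, so every component of $f^{-1}(D_\gamma)$ is a closed disk $D'$. The map $f:D'\to D_\gamma$ is a branched cover of some degree $d(D')$, and $\partial D'$ is a component of $f^{-1}(\gamma)$. Distinct components $D'$ are disjoint.

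\textbf{Step 1: structure of the Thurston matrix.} Two disjoint curves that are homotopic rel $P(f)$ cobound an annulus. Their inner disks are therefore nested, because the complementary disks both contain $\infty$. Consequently two disjoint preimage disks of the same $D_\gamma$ cannot have homotopic boundaries. So for $\gamma,\delta\in\Gamma$ at most one component of $f^{-1}(\gamma)$ is homotopic to $\delta$, and the Thurston matrix has entries $M_{\delta\gamma}\in\{0\}\cup\{1/d: d\ge1\}$. Now fix $\delta$ and suppose several $\gamma_1,\dots,\gamma_k$ have lifts $\gamma_i'\simeq\delta$. These lifts are pairwise disjoint and homotopic, so their inner disks are nested, $D'_1\subset\cdots\subset D'_k$. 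Applying $f$ gives $D_{\gamma_1}\subseteq\cdots\subseteq D_{\gamma_k}$, and the degrees satisfy $d_1\le\cdots\le d_k$. Moreover, whenever $d_{i+1}>d_i$, the annulus $D'_{i+1}\setminus D'_i$ contains critical points.

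\textbf{Step 2: find a degree-one cycle.} Pass to an irreducible sub-obstruction $\Gamma_0\subset\Gamma$ with $\lambda_{\Gamma_0}\ge1$ and Perron eigenvector $v>0$. I would then prove that some cycle $\gamma_0\to\gamma_1\to\cdots\to\gamma_n=\gamma_0$ in the graph of nonzero entries has all entries equal to $1$, which is exactly a Levy cycle. The route I would try first is a counting argument. Let $n(\gamma)=\#(P(f)\cap D_\gamma)$. A lift $\gamma'\simeq\delta$ of degree $d\ge2$ encloses a critical point. Combining this with $f(P\cap D')\subset P\cap D_\gamma$ and the Riemann--Hurwitz count $\#\{\text{critical points in }D'\}=d-1$, I would aim to show that the ``energy'' $\sum_\gamma v_\gamma$ weighted appropriately must strictly decrease under $M$ when every cycle contains an entry $1/d$ with $d\ge2$. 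That would contradict $\lambda\ge1$. A cleaner alternative is to pick $\gamma\in\Gamma_0$ with $D_\gamma$ innermost among those maximizing a suitable ratio, and follow lifts of degree $1$ backwards to build the cycle. This step is where I expect the main obstacle: excluding the possibility that the eigenvalue $\ge1$ is produced only by several degree-$\ge2$ entries in one row. The nesting from Step 1 is the tool I would use to rule that out, since the row sums $\sum_i 1/d_i$ are constrained by how many critical points fit in the nested annuli.

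\textbf{Step 3: degeneracy.} Given the Levy cycle, set $D_i=D_{\gamma_i}$ and let $D_i'$ be the component of $f^{-1}(D_{i+1})$ bounded by the degree-one lift $\gamma_i'\simeq\gamma_i$. Since $f:D_i'\to D_{i+1}$ has degree $1$, it is a homeomorphism. The homotopy $\gamma_i'\simeq\gamma_i$ rel $P(f)$ together with the nesting argument gives $P(f)\cap D_i'=P(f)\cap D_i$. Finally, the $D_i$ can be taken pairwise disjoint after discarding nested repetitions. If $D_i\subsetneq D_j$, then iterating the homeomorphic lifts along the cycle gives a contradiction with the curves being non-homotopic. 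So the $\gamma_i$ are either disjoint with disjoint disks or equal, and we extract a sub-cycle $\Gamma'\subset\Gamma$ that is a degenerate Levy cycle.
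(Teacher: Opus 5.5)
The paper does not prove this statement; it quotes \cite[Theorem 5.5]{BFH92} and remarks only that the argument carries over to semi-rational maps.

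Judged on its own, your Step 1 is correct. The entries of the Thurston matrix lie in $\{0\}\cup\{1/d\}$, and the support of each row is a chain of nested disks with nondecreasing degrees. Step 2, however, is the entire content of the theorem, and it is not supplied. You list two strategies, with an unspecified ``energy'' and an unspecified ``ratio'', and you flag the decisive case as open yourself.

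Moreover, the mechanism you name for that case cannot work. You propose to bound row sums by counting critical points in the nested annuli of a single row. Single-row constraints do not force $\lambda<1$, even together with the column constraint that lifts of one curve bound disjoint disks. Take $\Gamma=\{\alpha_1,\alpha_2,\beta_1,\beta_2\}$ with $D_{\beta_i}\subset D_{\alpha_i}$ and $D_{\alpha_1}\cap D_{\alpha_2}=\emptyset$. Let rows $\alpha_1,\alpha_2,\beta_1,\beta_2$ have supports $\{\alpha_1,\beta_1\}$, $\{\alpha_2,\beta_2\}$, $\{\alpha_2,\beta_2\}$, $\{\alpha_1,\beta_1\}$ respectively, with all entries $1/2$. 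Then:
\begin{itemize}
\item each row support is a chain with equal degrees, so there are no critical points in the annuli;
\item each column support is an antichain;
\item the matrix is irreducible and row-stochastic, so $\lambda=1$ with no entry equal to $1$.
\end{itemize}
Also, counting $\#(P(f)\cap D_\gamma)$ is meaningless for the semi-rational maps the paper needs, where $P(f)$ is infinite.

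What is missing is a monotonicity \emph{across} rows. Let $\gamma',\eta'$ be components of $f^{-1}(\Gamma)$ over $\gamma,\eta\in\Gamma$, homotopic to $\delta,\epsilon\in\Gamma$ respectively, with $D_\delta\subsetneq D_\epsilon$. Then $P(f)\cap D_{\gamma'}=P(f)\cap D_\delta$ is a nonempty proper subset of $P(f)\cap D_\epsilon=P(f)\cap D_{\eta'}$. Since $\gamma'\cap\eta'=\emptyset$, this forces $D_{\gamma'}\subsetneq D_{\eta'}$, hence $D_\gamma\subseteq D_\eta$. Also $\gamma\neq\eta$, because distinct components of $f^{-1}(D_\gamma)$ are disjoint. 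So $D_\gamma\subsetneq D_\eta$. This is exactly what excludes the pattern above: $D_{\beta_1}\subsetneq D_{\alpha_1}$, $M_{\beta_1\alpha_2}>0$ and $M_{\alpha_1\alpha_1}>0$, yet $D_{\alpha_2}\not\subset D_{\alpha_1}$.

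With this lemma the proof closes as follows.
\begin{itemize}
\item Take an irreducible block $\Gamma_0$ with $\lambda\geq1$; every row of it is nonzero.
\item Let $h(\gamma)$ be the maximal length of a chain of nested disks of curves in $\Gamma_0$ ending at $D_\gamma$.
\item Pushing such a chain forward with the monotonicity gives $h(\gamma)\geq h(\delta)$ whenever $M_{\delta\gamma}>0$. If row $\delta$ contains $\gamma_1,\gamma_2$ with $D_{\gamma_1}\subsetneq D_{\gamma_2}$, it gives $h(\gamma_2)\geq h(\delta)+1$.
\item Strong connectivity makes $h$ constant, hence $h\equiv 1$. So the disks of $\Gamma_0$ are pairwise disjoint, and every row has exactly one nonzero entry $1/d_i$.
\item Thus $\Gamma_0$ is a single cycle with $\lambda=(d_0\cdots d_{n-1})^{-1/n}\geq1$, which forces all $d_i=1$. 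Your Step 3 then gives degeneracy.
\end{itemize}
This also replaces your ``discarding nested repetitions'' in Step 3. That step is not justified as written, since a sub-collection of a cycle need not be a cycle, whereas here disjointness comes for free. Finally, this argument never uses finiteness of $P(f)$.
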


In \cite{BFH92} and \cite[\S 10]{Hub16}, Thurston obstructions are defined only for postcritically finite maps. However, the proof for Theorem \ref{thm:BFH92} also applies to postcritically infinite case, in particular, to semi-rational maps.
If a topological polynomial $f$ is also semi-rational, then we call $f$ a \textit{semi-polynomial}.

\begin{cor}\label{cor:Bers-Levy}
Let $f$ be a semi-polynomial such that every critical point of $f$ is either attracted by $\infty$, or lands in a periodic cycle that contains a critical point. Then $f$ is c-equivalent to a polynomial.
\end{cor}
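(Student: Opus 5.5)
By Theorem \ref{thm:Cui-Tan}, it suffices to show that $f$ has no Thurston obstruction. Since $f$ is a topological polynomial, Theorem \ref{thm:BFH92} reduces this further: we only need to rule out degenerate Levy cycles. So the plan is to assume $f$ has a degenerate Levy cycle $\Gamma=\{\gamma_0,\dots,\gamma_{n-1}\}$, with disjoint disks $D_i$ bounded by $\gamma_i$ and preimage disks $D_i'\subset f^{-1}(D_{i+1})$, and derive a contradiction.

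\textbf{The disks avoid $\infty$ and its basin.} We may take each $D_i$ to be the complementary component of $\gamma_i$ not containing $\infty$. Indeed, $\infty$ is a critical point of local degree $\deg(f)\geq 2$, while $f:D_i'\to D_{i+1}$ is a homeomorphism; if $\infty\in D_{i+1}$, then $D_i'$ would have to contain the full preimage $f^{-1}(\infty)=\{\infty\}$, which is impossible for a univalent restriction. Consequently $\infty\notin D_i$ for all $i$.

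\textbf{Each $D_i$ misses the accumulation points near $\infty$.} Since $f$ is holomorphic near $P'(f)$, a small round neighborhood $N$ of $\infty$ is forward invariant. After an isotopy rel $P(f)$ we arrange that each $\gamma_i$ avoids $N$ and the relevant preimages. Then $D_i\cap P(f)$ consists of finitely many points, and none of them lies on an orbit converging to $\infty$ inside $N$. Non-peripherality gives $\#(D_i\cap P(f))\geq 2$.

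\textbf{The hard part: finding a critical point inside the cycle of disks.} Consider the first-return map $F=f^{\circ n}:D_0'\to D_0$, which is a homeomorphism and maps $P(f)\cap D_0$ into $P(f)\cap D_0$. The map $f$ permutes the finite set $\bigcup_i (P(f)\cap D_i)$ injectively along the cycle, since $P(f)\cap D_i'=P(f)\cap D_i$. Each point of this finite set therefore has a finite forward orbit under $f$, so it is not attracted to $\infty$. By hypothesis, every critical point is either attracted by $\infty$ or lands on a periodic cycle containing a critical point. Each postcritical point $x\in D_i$ is $f^{\circ k}(c)$ for some critical point $c$. Because $x$ lies on a finite invariant configuration not converging to $\infty$, $c$ is not attracted by $\infty$. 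Hence $c$ lands on a periodic cycle $O$ containing a critical point $c'$, and $x$ itself is periodic and lies on $O$ (the points in $\bigcup_i D_i$ are permuted, hence periodic). Thus $c'\in O$, and $c'$ is visited by the orbit of $x$.

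The key difficulty is to place $c'$ inside some $D_i'$, so as to contradict the univalence of $f$ on $D_i'$. Follow the orbit of $x$: every point $y$ on it lies in some $D_j\cap P(f)=D_j'\cap P(f)$. If $c'$ itself belongs to $P(f)$, which it does because it is periodic, then $c'\in D_j'$ for some $j$. But $f|_{D_j'}$ is a homeomorphism, a contradiction. The remaining point to check carefully is that the whole orbit $O$ stays within $\bigcup_j D_j$. This follows because $f$ maps $P(f)\cap D_j'$ onto $P(f)\cap D_{j+1}$, so the forward orbit of $x$ stays in the union of the disks. Hence no degenerate Levy cycle exists, and by Theorem \ref{thm:Cui-Tan} the map $f$ is c-equivalent to a rational map $R$.

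\textbf{$R$ is a polynomial.} Finally, $R$ has $R^{-1}(\psi(\infty))=\{\phi(\infty)\}$ and $\phi=\psi$ near $P'(f)\ni\infty$. So $\infty$ is a totally invariant point of $R$ after a Möbius conjugation, which means $R$ is a polynomial.
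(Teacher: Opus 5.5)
Your proof is correct and takes the same route as the paper: you use Theorem \ref{thm:BFH92} to pass to a degenerate Levy cycle, then contradict the univalence of $f$ on some $D_i'$ by locating a periodic critical point there. You also supply the details the paper compresses into ``at least one of the $D_i$ contains a critical point'' — namely that $\infty\notin D_i$, and that $P(f)\cap\bigcup_i D_i$ is finite and permuted by $f$, hence periodic — though the isotopy in your second paragraph is unnecessary, since $\overline{D}_i$ is already a compact subset of $\C$ and $P'(f)\subseteq\{\infty\}$.
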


\begin{proof}
The proof is inspired by the Levy-Berstein Theorem for postcritically finite topological polynomials (see \cite[Corollary 5.13]{BFH92} or \cite[Theorem 10.3.9]{Hub16}).
Assume that $f$ has a Thurston obstruction $\Gamma$. By Theorem \ref{thm:BFH92}, $\Gamma$ contains a degenerate Levy cycle $\Gamma'=\{ \gamma_0, \dots, \gamma_{n-1},\gamma_n = \gamma_0 \}$ bounding disjoint topological disks $\{D_0, \dots, D_{n-1},D_n = D_0 \}$ such that one component of $f^{-1}(D_{i+1})$ is a disk $D_i'$ satisfying that $f:D_i'\to D_{i+1}$ is a homeomorphism and moreover,
\begin{equation}
f(P(f)\cap D_i')=f(P(f)\cap D_i)=P(f)\cap D_{i+1}, \text{\quad for all } i=0,\dots, n-1.
\end{equation}
By the assumption of the corollary, at least one of the $D_i$ contains a critical point, and then the component of $f^{-1}(\gamma_{i+1})$ homotopic to $\gamma_i$ cannot map to $\gamma_{i+1}$ of degree $1$, which is a contradiction. Hence $f$ has no Thurston obstruction.
By Theorem \ref{thm:Cui-Tan}, $f$ is c-equivalent to a polynomial since $f$ is topological polynomial.
\end{proof}

\subsection{Proof of Theorem \ref{thm:period}}

The criterion in Theorem \ref{thm:criterion} is applied to rational maps having periodic Fatou components with period $1$.
We shall use the topological characterization criterion obtained in previous subsection to generate more Cantor bubble Julia sets.

\begin{proof}[Proof of Theorem \ref{thm:period}]
\textit{Step 0}. \textit{The $p=1$ case}. Let $f_0(z)=-3z^2+z^3$ be a cubic polynomial having a superattracting fixed point $0$. Since $f_0$ has a critical point $2$ whose forward orbit $2\mapsto -4\mapsto -112\mapsto\cdots$ is attracted by $\infty$, by Theorem \ref{thm:criterion}, $J(f_0)$ is a Cantor set with bubbles (compare Figure \ref{Fig:Julia-Cantor bubble}).

\medskip
\textit{Step 1}. \textit{Basic setting based on $p=1$}. By B\"{o}ttcher's theorem, there exists a unique conformal mapping $\varphi_0$ defined in a neighborhood of $\infty$ which is tangent to the identity at $\infty$ such that $\varphi_0$ conjugates $f_0$ to $\zeta\mapsto\zeta^3$. The \textit{\textit{potential function}}
\begin{equation}
h_0(z)=\lim_{k\to\infty}3^{-k}\log_+\big|f_0^{\circ k}(z)\big|
\end{equation}
is defined in $\EC$ and coincides with $\log|\varphi_0(z)|$ in a neighborhood of $\infty$, where $\log_+ x:=\max\{\log x,0\}$ for $x>0$. Moreover, $h_0(z)>0$ if and only if $z$ is in the superattracting basin of $\infty$. It is well known that the map $\varphi_0$ can be extended to a conformal isomorphism $\varphi_0:U_\infty\to \EC\setminus\overline{\D}_r$ for $r=e^{ h_0(2)}>1$, where
\begin{equation}
U_\infty:=\{z\in\EC:h_0(z)>h_0(2)\} \text{\quad and\quad}\D_r:=\{z\in\C:|z|<r\}.
\end{equation}
Moreover, $U_\infty$ is simply connected and $\partial U_\infty$ has figure-eight shape such that $\EC\setminus \overline{U}_\infty$ consists of two Jordan disks. We label them by $U_0$ and $U_1$ such that $0\in U_0$. Then $f_0: U_0\to V_0$ and $f_0: U_1\to V_0$ are proper maps of degrees 2 and 1 respectively, where $V_0:=\{z\in\C:h_0(z)<h_0(-4)\}$. See Figure \ref{Fig-bubble-construction}.

\begin{figure}[!htpb]
  \setlength{\unitlength}{1mm}
  \centering
  \includegraphics[width=0.75\textwidth]{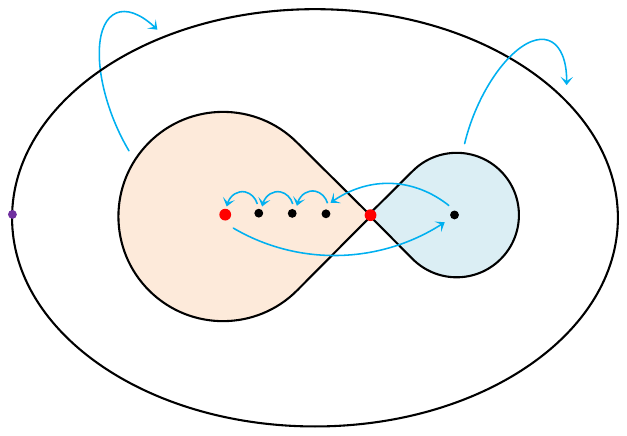}
  \put(-75,26){$U_0$}
  \put(-92,52){$f_0$}
  \put(-30,57){$f_0$}
  \put(-102.5,36.5){$-4$}
  \put(-42,45){$f$}
  \put(-55,43.2){$f$}
  \put(-60.5,43.2){$f$}
  \put(-66.3,43.2){$f$}
  \put(-25,36){$U_1$}
   \put(-30.5,33){$z_1$}
  \put(-50,26.5){$f$}
  \put(-71,34){$0$}
  \put(-52,34){\small{$z_2$}}
  \put(-57,34){\small{$z_3$}}
  \put(-64.5,34.5){\small{$z_{p-1}$}}
  \put(-44.7,33){$2$}
  \put(-25,19){$V_0$}
  \caption{A sketch of the construction of the topological polynomial $f$ such that $f|_{\overline{U}_\infty}=f_0$ and $f(z_i)=z_{i+1}$ for all $0\leq i\leq p-1$, where $z_1\in U_1$ and $z_0=z_p=0,z_2,\cdots, z_{p-1}\in U_0$.}
  \label{Fig-bubble-construction}
\end{figure}

\textit{Step 2}. \textit{Construction when $p\geq 2$}. Let $p\geq 2$ be an integer. We choose $z_1\in U_1$ and $p-1$ different points $z_0=z_p=0,z_2,\cdots, z_{p-1}$ in $U_0$.
Define a branched covering $f:\EC\to\EC$ of degree 3 such that
\begin{itemize}
\item the restriction of $f$ on $\overline{U}_\infty$ is $f_0$;
\item $0$ is a critical point, $f: U_0\to V_0$ is a proper map of degree 2 and $f: U_1\to V_0$ is a homeomorphism; and
\item $f(z_i)=z_{i+1}$ for all $0\leq i\leq p-1$.
\end{itemize}
Then $f$ is a semi-polynomial such that the critical point $2$ is attracted by $\infty$ and the critical point $0$ lies in a periodic cycle. By Corollary \ref{cor:Bers-Levy}, $f$ is c-equivalent to a cubic polynomial $g$. In the following we prove that $J(g)$ is a Cantor set with bubbles.

\medskip
\textit{Step 3}. \textit{The structure of $g$}.
By the definition of $c$-equivalent, there is a pair $(\phi_0, \phi_1)$ of homeomorphisms of $\EC$ and a neighborhood $W$ of $\infty$ such that:
\begin{itemize}
\item $\phi_0 \circ f=g \circ \phi_1$;
\item $\phi_0$ is holomorphic in $W$;
\item $\phi_0$ and $\phi_1$ are equal on $P(f)\cup W$; and
\item $\phi_0$ and $\phi_1$ are isotopic to each other rel $P(f) \cup \overline{W}$.
\end{itemize}
By lifting the isotopy between $\phi_0$ and $\phi_1$, we obtain a sequence of homeomorphisms $(\phi_n:\EC\to\EC)_{n\geq 0}$ which is isotopic to $\phi_0$ rel $P(f) \cup \overline{W}$ for all $n\geq 0$ such that
\begin{itemize}
\item $\phi_n \circ f=g \circ \phi_{n+1}$; and
\item $\phi_{n+1}=\phi_n$ on $f^{-n}(P(f)\cup W)$.
\end{itemize}
In particular, there exists $n'\geq 0$ such that $f=f_0:U_\infty\to\EC\setminus\overline{V}_0$ is conjugate to $g:\phi_{n'+1}(U_\infty)\to\phi_{n'}(\EC\setminus\overline{V}_0)$ by the conformal map $\phi_{n'+1}|_{U_\infty}=\phi_{n'}|_{U_\infty}$.

Denote $c:=\phi_1(2)$ and $w_i:=\phi_0(z_i)$ for all $0\leq i\leq p-1$. Then $c$ and $w_0$ are critical points of $g$ in $\C$, where $c$ is attracted to $\infty$ and $\{w_0,w_1,\cdots,w_{p-1}\}$ is a superattracting cycle of $g$ with period $p$. Hence $J(g)$ is disconnected. Moreover, we know that  $w_1\in \phi_{n'}(U_1)$ and $w_0,w_2,\cdots, w_{p-1}$ in $\phi_{n'}(U_0)$.

\medskip
\textit{Step 4}. \textit{Cantor bubbles when $p\geq 2$}.
Let $K_1$ be the connected component of the filled Julia set of $g$ containing $w_1$. Then $g^{\circ p}(K_1) = K_1$ and $g^{\circ p}(J_1) = J_1$, where $J_1:=\partial K_1$.
By Theorem \ref{thm:McM1988}, there exist a rational map $g_0$ and a quasiconformal mapping $\phi: \widehat{\mathbb{C}} \to \widehat{\mathbb{C}}$ such that
\begin{equation}
\phi(J_1)=J(g_0) \text{\quad and\quad}\phi\circ g^{\circ p}=g_0\circ \phi \text{ on } J_1.
\end{equation}
Since the postcritical sets satisfy $P(g^{\circ p})=P(g)$ and $w_0,w_2,\cdots, w_{p-1}\in\phi_{n'}(U_0)$, we conclude that $g^{\circ p}$ contains exactly one critical value in $K_1$.

By a completely similar argument as Step 1 in the proof of Theorem \ref{thm:criterion}, we conclude that $J_1$ is a Jordan curve. This implies that $J_i$ is also a Jordan curve for all $0\leq i\leq p-1$, where $J_i=\partial K_i$ and $K_i$ is the connected component of the filled Julia set of $g$ containing $w_i$.
The remaining proof is the same as Theorem \ref{thm:criterion} (from Step 2 there) and $J(g)$ is a Cantor set with bubbles.
\end{proof}

\subsection{Specific examples II}

In this subsection, we apply Theorem \ref{thm:period} to construct a Cantor bubble cubic polynomial having a superattracting basin of period 2.
Moreover, we also construct a Cantor bubble quartic polynomial having two invariant but not completely invariant attracting or parabolic basins.

\begin{exam}[See Figure \ref{Fig:Julia-Cantor bubble-p}]\label{exam:bubble-p=2}
Consider the cubic polynomial
\begin{equation}
g_a(z)=z^3-\Big(a+\frac{1}{a}\Big)z^2+a, \text{\quad where }a\in\C\setminus\{0\}.
\end{equation}
If $a$ is large enough, then $J(g_a)$ is a Cantor set with bubbles.
If $a\neq 0$ is small enough, then the Julia components of $g_a$ contain countably many copies of the Julia set of $z\mapsto z^2-1$, which is not a Cantor set with bubbles.
\end{exam}

\begin{figure}[!htpb]
  \setlength{\unitlength}{1mm}
  \setlength{\fboxsep}{0pt}
  \centering
  \fbox{\includegraphics[width=0.8\textwidth]{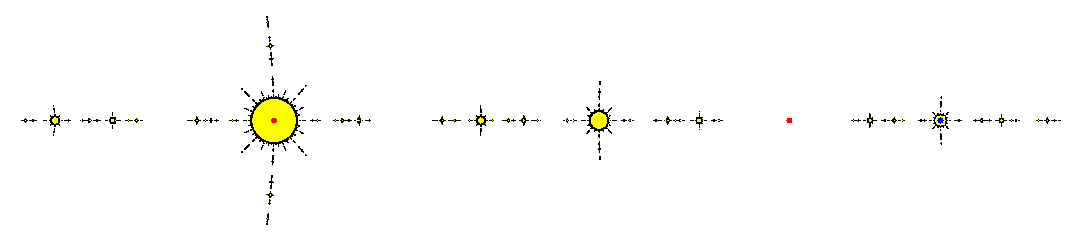}}
  \put(-84,6){$0$}
  \put(-15.5,7){$a$}
  \put(-11,20.5){$a=\frac{5}{2}$}
  \put(-32,9){$c_0$}\vskip0.15cm
  \fbox{\includegraphics[width=0.8\textwidth]{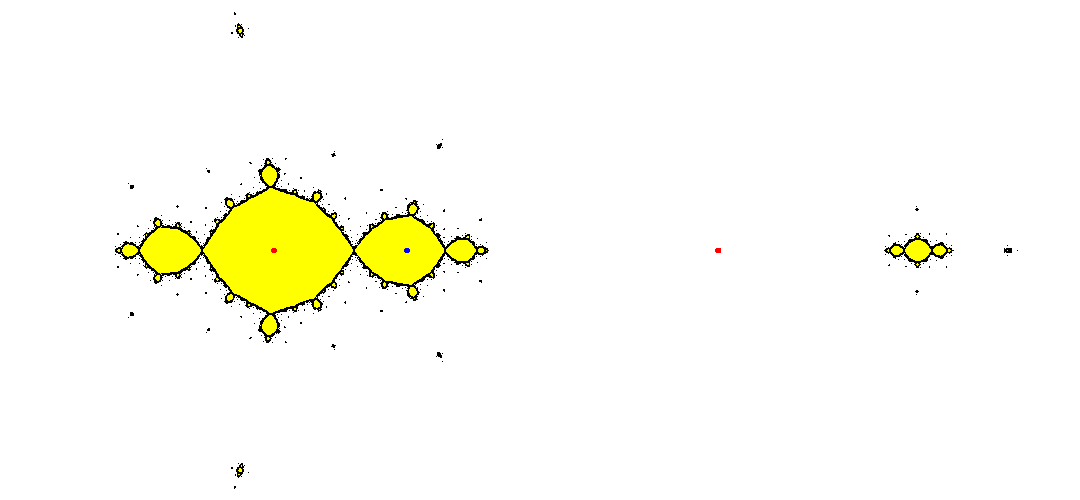}}
  \put(-85.8,22.5){$0$}
  \put(-72,23.6){$a$}
  \put(-11,47.5){$a=\frac{1}{2}$}
  \put(-39.5,23){$c_0$}
  \caption{Two different Julia sets of $g_a$ in Example \ref{exam:bubble-p=2}, where $a=\frac{5}{2}$ and $a=\frac{1}{2}$ are chosen such that $J(g_a)$ is, respectively, is not a Cantor set with bubbles (from top to bottom). Note that in both cases $g_a$ has a superattracting periodic cycle $\{0,a\}$ of period $2$.}
  \label{Fig:Julia-Cantor bubble-p}
\end{figure}

\begin{proof}
It is easy to verify that $g_a$ has a superattracting cycle $\{0,a\}$ of period two, and two critical points $0$ and $c_0=\frac{2}{3}(a+\frac{1}{a})$ in $\C$.
Denote $R_1:=(\frac{a}{10})^3$. If $a$ is large enough, then
\begin{enumerate}
\item[(a1)] $U_\infty:=\{z\in\EC:|z|>R_1\}$ is contained in the superattracting basin of $\infty$;
\item[(b1)] $|g_a(z)|>R_1$ for each $z\in \partial U_\infty\cup\{c_0\}$; and
\item[(c1)] $\overline{U}_0\cup\overline{U}_a\subset \EC\setminus \overline{U}_\infty$, where $U_0$ and $U_a$ are two different connected components of $g_a^{-1}(\EC\setminus \overline{U}_\infty)$ containing $0$ and $a$ respectively.
\end{enumerate}
We only verify the statement (c1). For large $a$, note that $a\in \EC\setminus \overline{U}_\infty$ has three preimages $0$, $0$ and $a+\frac{1}{a}$ in $\EC\setminus \overline{U}_\infty$ (counting with multiplicity). If $g_a^{-1}(\EC\setminus \overline{U}_\infty)$ is connected, then $c_0\in g_a^{-1}(\EC\setminus \overline{U}_\infty)\subset \EC\setminus \overline{U}_\infty$, which contradicts (b1). Thus $g_a^{-1}(\overline{U}_\infty)$ is contained in the superattracting basin of $\infty$, and $g_a^{-1}(\EC\setminus \overline{U}_\infty)$ consists of two different components $U_0$ and $U_a'$ which contain $0$ and $a+\frac{1}{a}$ respectively. Note that $a$ can be chosen large enough such that $U_a'=U_a$. By a completely similar proof to Theorem \ref{thm:period}, $J(g_a)$ is a Cantor set with bubbles.

\medskip
Denote $R_2:=(\frac{1}{10a})^3$. If $a\neq 0$ is small enough, then the following properties hold by similar calculations as above:
\begin{enumerate}
\item[(a2)] $U_\infty:=\{z\in\EC:|z|>R_2\}$ is contained in the superattracting basin of $\infty$;
\item[(b2)] $|g_a(z)|>R_2$ for each $z\in \partial U_\infty\cup\{c_0\}$; and
\item[(c2)] $\overline{U}_0\cup\overline{U_a'}\subset \EC\setminus \overline{U}_\infty$, where $U_0$ and $U_a'$ are two different connected components of $g_a^{-1}(\EC\setminus \overline{U}_\infty)$ such that $0, a\in U_0$ and $a+\frac{1}{a}\in U_a'$.
\end{enumerate}
The map $g_a:U_0\to \EC\setminus \overline{U}_\infty$ is a quadratic-like map (see \cite{DH85b} or \S\ref{subsec:poly-like}) with a superattracting cycle $\{0,a\}$. Hence the filled Julia  component of $g_a$ containing $0$ is homeomorphic to the filled Julia set of $z\mapsto z^2-1$. Therefore, $J(g_a)$ contains countably many components which are copies of $J(z^2-1)$ and it is not a Cantor set with bubbles.
\end{proof}

\begin{exam}[See Figure \ref{Fig:Julia-Cantor-bubble-attr-para}]\label{exam:bubble-attr-para}
Consider the quartic polynomial
\begin{equation}
h_a(z)=\frac{z^2(b+cz+dz^2)}{2a^2(a-1)(a-2)},
\end{equation}
where
\begin{equation}
b=-a(9a-8), \quad c=6a^2-4 \text{\quad and\quad} d=-4a+3
\end{equation}
with $a\in\C\setminus\{0,1,2,\frac{3}{4}\}$. Then $h_a$ has a superattracting fixed point $0$ and a parabolic fixed point $a$ with multiplier $1$.
If $a-1>0$ is small enough, then $J(h_a)$ is a Cantor set with bubbles.
\end{exam}

\begin{figure}[!htpb]
  \setlength{\unitlength}{1mm}
  \setlength{\fboxsep}{0pt}
  \centering
  \fbox{\includegraphics[width=0.85\textwidth]{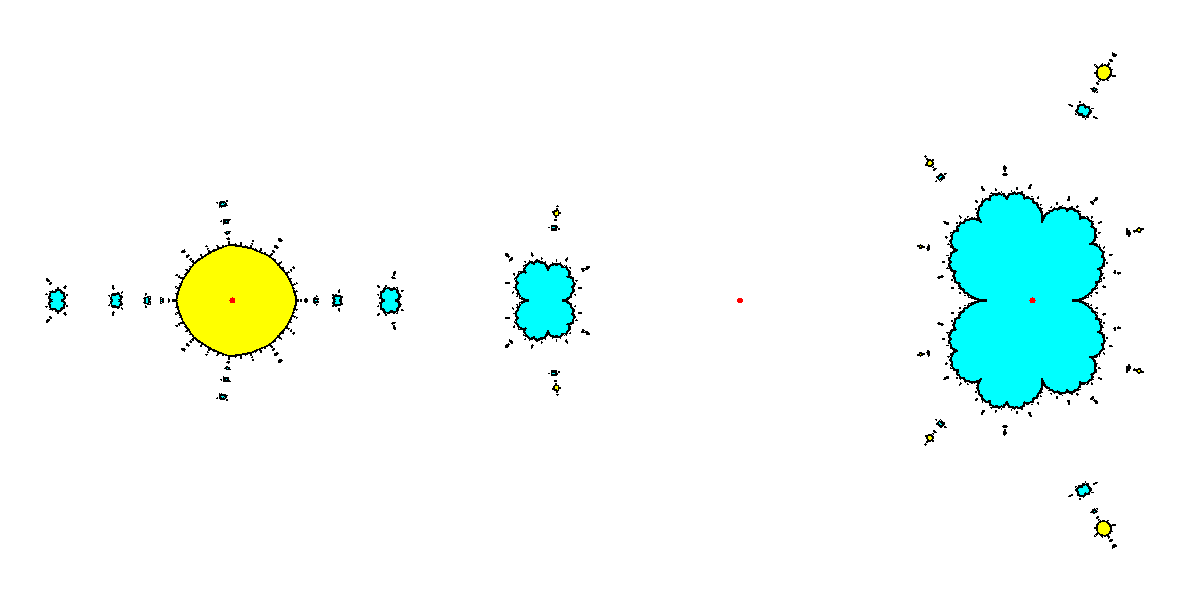}}
  \put(-98.3,26){$0$}
  \put(-18,26){$1$}
  \put(-14,27){$a$}
  \put(-47.5,26.5){$c_1$}
  \caption{A Cantor bubble Julia set of $h_a$ in Example \ref{exam:bubble-attr-para}, where $a=1.05$ is chosen such that $h_a$ has a superattracting fixed point $0$, a parabolic fixed point $a$, and three critical points $0$, $1$ and $c_1$.}
  \label{Fig:Julia-Cantor-bubble-attr-para}
\end{figure}

\begin{proof}
By direct calculations, $h_a$ has a superattracting fixed point $0$ and a parabolic fixed point $a$ with multiplier $1$, and critical points $0$, $1$ and $c_1=\frac{a(9a-8)}{2(4a-3)}$ in $\C$.
Moreover, if $a-1>0$ is sufficiently small, then
\begin{equation}
|h_a(c_1)|\asymp\frac{1}{a-1} \text{\quad and\quad} |h_a^{\circ 2}(c_1)|\asymp\frac{1}{(a-1)^5},
\end{equation}
and inductively, $c_1$ is contained in the superattracting basin of $\infty$. Thus the critical point $1$ is contained in the immediate parabolic basin of $a$ (see \cite[Theorem 10.15]{Mil06}).

Let $K_0$ and $K_1$ be the connected components of the filled Julia set $K(h_a)$ of $h_a$.
Note that $h_a$ is a real polynomial for real $a$. Every Fatou component of $h_a$ is symmetric about the real axis if it intersects with the real axis.
Since $c_1\in(0,1)$ is not contained in $K(h_a)$, it follows that $K_0\neq K_1$.
By a completely similar argument as Step 1 in the proof of Theorem \ref{thm:criterion}, we obtain that $\partial K_0$ and $\partial K_1$ are Jordan curves.
The rest proof is similar to that of Theorem \ref{thm:criterion} and $J(h_a)$ is a Cantor set with bubbles.
\end{proof}

\section{Dimensions of Cantor bubble Julia sets}\label{sec:dim}

The Cantor bubble Julia sets of rational maps obtained previously are geometrically finite and they have Hausdorff dimension strictly less than two. In this section we prove the existence of Cantor bubble Julia sets with Hausdorff dimension two.

\subsection{Polynomial-like maps and surgery}\label{subsec:poly-like}

Let $U$ and $V$ be two Jordan disks in $\C$ such that $U$ is compactly contained in $V$. The map $f:U\to V$ is called a \textit{polynomial-like map} of degree $d\geq 2$ if $f$ is a proper holomorphic surjection of degree $d$.
Polynomial-like maps of degree 2 and 3 are called \textit{quadratic-like} and \textit{cubic-like} maps respectively.
Let $K(f):=\bigcap_{n\geq 0}f^{-n}(V)$ be the \emph{filled Julia set} and $J(f):=\partial K(f)$ be the \textit{Julia set} of $f$.
Two polynomial-like maps $f_1:U_1\to V_1$ and $f_2:U_2\to V_2$ are said to be \emph{hybrid equivalent} if there is a quasiconformal mapping $h$ defined from a neighborhood of $K(f_1)$ onto a neighborhood of $K(f_2)$, which conjugates $f_1$ to $f_2$ and the complex dilatation of $h$ on $K(f_1)$ is zero. The following \textit{Straightening Theorem} is fundamental in the polynomial-like renormalization theory.

\begin{thm}[{\cite[p.\,296]{DH85b}}]\label{thm:straightening}
Let $f:U\to V$ be a polynomial-like map of degree $d\geq 2$. Then $f:U\to V$ is hybrid equivalent to a polynomial $P$ with the same degree $d$. Moreover, if $K(f)$ is connected, then $P$ is uniquely determined up to a conjugation by an affine map.
\end{thm}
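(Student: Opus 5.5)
The Straightening Theorem has two parts: existence of a hybrid conjugacy to a polynomial, and uniqueness when $K(f)$ is connected. The plan follows the classical Douady--Hubbard argument: glue a model map near infinity, obtain an invariant almost complex structure, apply the measurable Riemann mapping theorem, and get uniqueness from rigidity.

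\emph{Existence.} First we shrink $V$ slightly so that $\partial U$ and $\partial V$ are smooth (analytic) Jordan curves and $\overline U\subset V$; this keeps $f$ polynomial-like of degree $d$ and does not change $K(f)$. Choose $R>1$ and a diffeomorphism $\psi$ from the closed annulus $A=\overline V\setminus U$ onto the round annulus $\{R\le |z|\le R^d\}$ with two properties. On $\partial V$, $\psi$ is any real-analytic parametrization onto $\{|z|=R^d\}$. On $\partial U$, $\psi$ is chosen so that $\psi(f(z))=\psi(z)^d$. This is possible because $f:\partial U\to\partial V$ is a degree $d$ covering, so the boundary value on $\partial U$ can be obtained by lifting through $z\mapsto z^d$. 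Any diffeomorphic interpolation in between will do.

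Next define a degree $d$ branched covering $F:\EC\to\EC$. Set $F=f$ on $\overline U$, and set $F(z)=z^d$ for $|z|\ge R$, using the coordinate $\psi$ to identify $\EC\setminus U$ with $\{|z|\ge R\}$. By construction the two definitions agree on $\partial U$. Then $F$ is quasiregular, holomorphic outside $A$, with $\infty$ a fixed point having only itself as preimage. Define an $F$-invariant Beltrami coefficient $\mu$ as follows. On $\{|z|>R^d\}$ and on $K(f)$ take $\mu=0$. On $A$ take the pullback of the standard structure by $\psi$. On $f^{-n}(A)$ pull back by $F^{\circ n}$, which is holomorphic there. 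Every orbit passes through $A$ at most once, and only at a non-holomorphic step, so the dilatation is bounded by that of $\psi$ and $\|\mu\|_\infty<1$. The measurable Riemann mapping theorem gives a quasiconformal $\phi$ with $\phi^*\sigma_0=\mu$, normalized to fix $\infty$. Then $P=\phi\circ F\circ\phi^{-1}$ preserves the standard structure, so $P$ is holomorphic of degree $d$ with $P^{-1}(\infty)=\{\infty\}$; that is, $P$ is a polynomial. The map $\phi$ restricted to $U$ conjugates $f$ to $P$, and $\bar\partial\phi=0$ a.e. on $K(f)$, which gives the hybrid equivalence.

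\emph{Uniqueness when $K(f)$ is connected.} Suppose $P_1$ and $P_2$ are both hybrid equivalent to $f$. Composing the two conjugacies gives a quasiconformal $h$ near $K(P_1)$ conjugating $P_1$ to $P_2$, with $\bar\partial h=0$ on $K(P_1)$. Since the filled Julia sets are connected, the Böttcher maps give conformal conjugacies $B_i:\EC\setminus K(P_i)\to\EC\setminus\overline{\D}$ to $z^d$. The key step is to modify $h$ outside $K$ to equal $B_2^{-1}\circ B_1$ while keeping it quasiconformal and a conjugacy. This is done by adjusting by a homotopy in the fundamental annulus and lifting, so that the two maps agree near $K$ up to isotopy; the outcome is a global quasiconformal conjugacy $\tilde h$ that is conformal off $K(P_1)$. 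Because $\bar\partial\tilde h=0$ on $K(P_1)$ as well, Weyl's lemma makes $\tilde h$ conformal on $\EC$, hence affine.

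The main obstacle is this gluing in the uniqueness step. The quasiconformal conjugacy near $K$ and the Böttcher conjugacy outside can differ by a twist in the fundamental annulus, and one has to check that a quasiconformal interpolation still conjugates. Otherwise one must invoke the rigidity lemma that a quasiconformal conjugacy with $\bar\partial=0$ on $K$ and conformal on the basin of infinity is affine, using removability of $\partial K$ for such maps. I would carry this out following Douady--Hubbard's original argument \cite{DH85b}.
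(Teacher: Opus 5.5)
The paper gives no proof of this statement; it is quoted from Douady--Hubbard \cite{DH85b}. Your existence argument is their surgery and is correct. One small point: either glue the exterior of $V$ on abstractly along the real-analytic curve $\partial V$, or take $\psi|_{\partial V}$ to be the boundary values of the Riemann map $\EC\setminus\overline V\to\{|z|>R^d\}$. Either way, setting $\mu=0$ off $V$ is then legitimate.

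The uniqueness part has a genuine gap at exactly the step you flag, and that step is false as you state it. In general $h$ cannot be modified off $K(P_1)$ to agree with $B_2^{-1}\circ B_1$.

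\emph{Counterexample.} Take $P_1=P_2=z^d$, $B_1=B_2=\mathrm{id}$, and $h(z)=\omega z$ with $\omega^{d-1}=1\neq\omega$. This $h$ is a hybrid conjugacy, but any map equal to $h$ on $\overline{\D}$ and to the identity outside is discontinuous on $\partial\D$.

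\emph{Why twisting cannot fix it.} In general, the lift $\tilde\theta$ of $B_2\circ h\circ B_1^{-1}$ under $w=e^{\zeta}$ satisfies $\tilde\theta(d\zeta)=d\tilde\theta(\zeta)+2\pi\ii k$, and $k \bmod (d-1)$ does not depend on the choice of lift. A twist in the fundamental annulus only changes the integer $m$ in the outer lift $\zeta+2\pi\ii m$ of the identity. Normalizing the lift to equal $\tilde\theta$ near $K$, each pull-back replaces $m$ by $(m-k)/d$, and this stays an integer forever only if $m=-k/(d-1)$. So when $k\not\equiv 0\pmod{d-1}$, no homotopy adjustment helps. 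You must instead glue with the conformal conjugacy $B_2^{-1}\circ(\omega B_1)$ for the appropriate $(d-1)$-st root of unity $\omega$. The issue is invisible for $d=2$.

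\emph{Missing limit argument.} Even with the right $\omega$ and twist, finitely many lifts never produce your $\tilde h$. After $n$ pull-backs the map is conformal only on $\{G_1>t/d^n\}$, where $G_1$ is the Green function of $K(P_1)$. Continuity of the glued map across $J(P_1)$ is precisely what has to be proved. The standard way to finish is as follows.
\begin{enumerate}
\item Choose $0<s<t$ so that $h$ conjugates on $\{G_1\le s\}$ and $h(\{G_1\le s\})\subset\{G_2<t\}$.
\item Let $h_0$ equal $h$ on $\{G_1\le s\}$ and $B_2^{-1}\circ(\omega B_1)$ on $\{G_1\ge t\}$, with a quasiconformal interpolation of the correct twist in between.
\item Define $h_{n+1}$ by $P_2\circ h_{n+1}=h_n\circ P_1$, equal to $h$ on $\{G_1\le s/d^{n+1}\}$ and to $B_2^{-1}\circ(\omega B_1)$ on $\{G_1\ge t/d^{n+1}\}$.
\item Pulling back by the holomorphic map $P_1$ does not increase dilatation, so $\|\mu_{h_n}\|_\infty\le k_0<1$.
\item Since $h_n=h$ on a neighbourhood of $K(P_1)$, we get $\mu_{h_n}=0$ a.e.\ on $K(P_1)$. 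Also $\mu_{h_n}=0$ on $\{G_1>t/d^n\}$, so $\mu_{h_n}\to 0$ a.e.
\item All $h_n$ coincide on $\{G_1\ge t\}$, so they converge uniformly to a conformal map of $\C$, i.e.\ an affine map $A$.
\item Passing to the limit in $P_2\circ h_{n+1}=h_n\circ P_1$ gives $P_2\circ A=A\circ P_1$.
\end{enumerate}
No removability of $\partial K$ is needed anywhere.
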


A map $F:U\to\EC$ is called \textit{quasi-regular} if it can be written as $F=G\circ\phi$, where $\phi:U\to\phi(U)$ is a quasiconformal mapping defined in the open set $U$ and $G:\phi(U)\to\EC$ is holomorphic. For other equivalent characterizations of quasi-regular maps, see \cite[\S 1.6]{BF14a}.
Shishikura proved the following lemma, which is very useful when performing quasiconformal surgery. See also \cite[Proposition 5.2]{BF14a}.

\begin{lem}[{\cite[\S 3]{Shi87}}]\label{lema:qc}
Let $F:\EC\to\EC$ be a quasi-regular map. Suppose there exist an open set $E\subset\EC$ and an integer $N\geq 0$ satisfying the following two conditions:
\begin{itemize}
\item $F(E)\subset E$; and
\item $\frac{\partial F}{\partial \overline{z}}=0$ holds in $E$ and on $\EC\setminus F^{-N}(E)$ a.e.
\end{itemize}
Then there is a quasiconformal mapping $\phi:\EC\to\EC$ such that $\phi\circ F\circ \phi^{-1}$ is rational.
\end{lem}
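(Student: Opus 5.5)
The plan is to follow Shishikura's original argument: build a conformal structure that $F$ preserves, then integrate it with the measurable Riemann mapping theorem. Let $\sigma_0$ be the standard complex structure on $\EC$; the goal is an $F$-invariant Beltrami coefficient $\mu$ with $\|\mu\|_\infty<1$.

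First, on $E$ I set $\mu=0$. This is consistent with invariance because $F(E)\subset E$ and $F$ is holomorphic a.e.\ on $E$, so $F^*\sigma_0=\sigma_0$ there. Next, for a point $z\in F^{-n}(E)\setminus F^{-(n-1)}(E)$ with $n\geq 1$, I define $\mu$ as the pullback $(F^{\circ n})^*\sigma_0$ at $z$. Since $F^{\circ n}(z)\in E$, this is the only choice compatible with invariance. On $\EC\setminus\bigcup_{n\geq 0}F^{-n}(E)$ I again set $\mu=0$. Invariance holds there because this set is backward invariant (it is the complement of a forward-invariant-under-preimages set) and $F$ is holomorphic on it: it lies in $\EC\setminus F^{-N}(E)$, where $\partial F/\partial\bar z=0$ a.e. Checking $F^*\mu=\mu$ a.e.\ is then a bookkeeping exercise over the layers $F^{-n}(E)\setminus F^{-(n-1)}(E)$.

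The main obstacle is the uniform bound $\|\mu\|_\infty<1$, because pulling back by $F^{\circ n}$ could accumulate dilatation. Take $z$ in layer $n$, so $z\in F^{-n}(E)\setminus F^{-(n-1)}(E)$, and follow its orbit $z, F(z),\dots,F^{\circ n}(z)\in E$. For each $0\le j<n$ the point $F^{\circ j}(z)$ lies outside $E$. If $n-j>N$, then $F^{\circ j}(z)\notin F^{-N}(E)$, since otherwise $z$ would reach $E$ in fewer than $n$ steps. So $F$ is conformal at those points. Only the last $N$ steps, those with $j\ge n-N$, can carry non-zero dilatation. The dilatation of $F^{\circ n}$ at $z$ is therefore at most $K^N$, where $K$ is the maximal dilatation of the quasi-regular map $F$. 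This uses that conformal steps do not change the dilatation and that dilatations multiply under composition. Hence $\|\mu\|_\infty\le (K^N-1)/(K^N+1)<1$.

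Finally, the measurable Riemann mapping theorem gives a quasiconformal $\phi:\EC\to\EC$ with $\phi^*\sigma_0=\mu$. Then $G=\phi\circ F\circ\phi^{-1}$ preserves $\sigma_0$, so it is a quasi-regular map with zero dilatation a.e. By Weyl's lemma (the $1$-quasiregular maps are holomorphic), $G$ is holomorphic on $\EC$, hence rational. The only delicate point is the measure-theoretic one: the set where $F$ is non-holomorphic and the level sets must transform correctly under $F$ a.e. This is handled by noting that quasi-regular maps send null sets to null sets, as do their local inverses.
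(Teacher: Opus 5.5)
Your proposal is correct and is exactly Shishikura's standard argument: you pull back the standard structure along the layers $F^{-n}(E)\setminus F^{-(n-1)}(E)$, observe that only the last $N$ steps of an orbit before it enters $E$ can be non-holomorphic (because the sets $F^{-n}(E)$ increase), so that $\|\mu\|_\infty\le (K^N-1)/(K^N+1)$, and then integrate $\mu$ by the measurable Riemann mapping theorem. The paper does not reprove the lemma but cites \cite[\S 3]{Shi87} and \cite[Proposition 5.2]{BF14a}, which use precisely this argument; your only slip is cosmetic: the residual set $\EC\setminus\bigcup_{n\geq 0}F^{-n}(E)$ is completely invariant, and the property you actually use is that $F$ maps it into itself, not backward invariance.
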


\subsection{Proof of Theorem \ref{thm:dim-2}}

It is known that there exist cubic polynomials having Cantor Julia sets with Hausdorff dimension two \cite{Yan21}. We use quasiconformal surgery to combine such polynomials with quadratic polynomials to obtain a Cantor bubble Julia set with Hausdorff dimension two.

\begin{proof}[Proof of Theorem \ref{thm:dim-2}]
\textit{Step 1}. \textit{The map $f_0$}. Let $f_0$ be a cubic polynomial in \cite{Yan21} such that $J(f_0)$ is a Cantor Julia set having Hausdorff dimension two. We choose a large $R_0>1$ such that $f_0^{-1}(\D_{R_0})$ is a Jordan domain and compactly contained in $\D_{R_0}$, where $\D_r=\{z\in\C:|z|<r\}$ for $r>0$. Denote $U_0:=f_0^{-1}(\D_{R_0})$. Then $f_0: U_0\to\D_{R_0}$ is a cubic-like map.

\medskip
\textit{Step 2}. \textit{The map $f_1$}.
Let $z_1\in\D_{R_0}\setminus \overline{U}_0$ and $R_1>1$. We consider the following conformal map
\begin{equation}
\varphi_1(z):=R_1^2\,\frac{R_0(z-z_1)}{R_0^2-\overline{z}_1 z}:\D_{R_0}\to\D_{R_1^2}
\end{equation}
which maps $z_1$ to $0$.
Then $U_1:=\varphi_1^{-1}(\D_{R_1})$ is a disk containing $z_1$. We choose $R_1>1$ large enough such that $\overline{U}_1\subset \D_{R_0}\setminus \overline{U}_0$. Then
\begin{equation}
f_1:=\varphi_1^{-1}\big(\varphi_1(z)^2\big):U_1\to\D_{R_0}
\end{equation}
is a quadratic-like map and $z_1$ is a superattracting fixed point of $f_1$.

\medskip
\textit{Step 3}. \textit{The power map}.
Let $1<R<R_0$ be a number which is close to $R_0$ such that $\overline{U}_0\cup \overline{U}_1\subset\D_{R}$ and $R_0<R^5$. Then $\D_R\setminus(\overline{U}_0\cup \overline{U}_1)$ is a multiply connected domain having $3$ smooth boundary components.
Moreover, $\deg(f_0:\partial U_0\to\partial \D_{R_0})=3$, $\deg(f_1:\partial U_1\to\partial \D_{R_0})=2$ and $\deg(z\mapsto z^5:\partial \D_R\to\partial \D_{R^5})=5$. See Figure \ref{Fig-surgery}.

\begin{figure}[!htpb]
  \setlength{\unitlength}{1mm}
  \centering
  \includegraphics[width=0.85\textwidth]{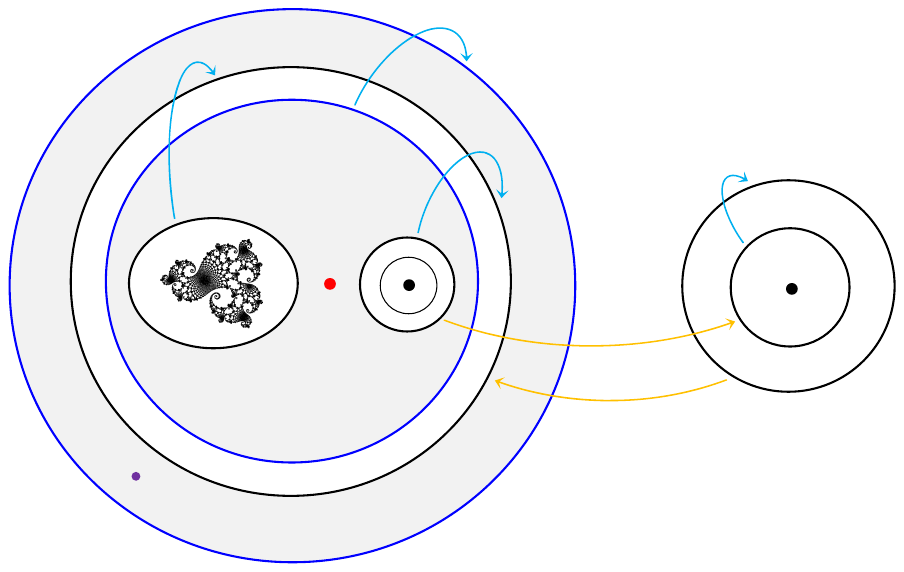}
  \put(-63,13){$\D_{R_0}$}
  \put(-72,20){$\D_R$}
  \put(-77,34){$c$}
  \put(-100.5,10){$v$}
  \put(-93,26){$U_0$}
  \put(-67,28){$U_1$}
  \put(-67,35.3){\small{$z_1$}}
  \put(-38,33){$\varphi_1$}
  \put(-39,18.5){$\varphi_1^{-1}$}
  \put(-54,9){$\D_{R^5}$}
  \put(-67,49){$f_1$}
  \put(-96,52){$f_0$}
  \put(-56.5,69.5){$z\mapsto z^5$}
  \put(-28,54){$\zeta\mapsto\zeta^2$}
  \put(-17.3,26.5){$\D_{R_1}$}
  \put(-9.5,22){$\D_{R_1^2}$}
  \put(-16,33){$0$}
  \caption{A sketch of the surgery construction of a degree five quasi-regular map which combines a cubic polynomial $f_0$ having a full dimensional Cantor Julia set and a quadratic-like map $f_1$ with a Jordan curve Julia set. Some special curves and points are marked.}
  \label{Fig-surgery}
\end{figure}


\medskip
\textit{Step 4}. \textit{The quasi-regular map}.
By Riemann-Hurwitz's formula and quasi-regular interpolation\footnote{One can paste a holomorphic ``model" of degree $5$ polynomial (which has an escaping critical point and two superattracting fixed points of local degrees $3$ and $2$ respectively) near the escaping critical point and three quasiconformal interpolations in annuli (for quasiconformal interpolation between annuli, see \cite[\S 2.3.2]{BF14a}) to obtain the desired quasi-regular map $g:\D_R\setminus(\overline{U}_0\cup \overline{U}_1)\to \D_{R^5}\setminus \overline{\D}_{R_0}$.}, there exists a continuous map
\begin{equation}
g:\overline{\D}_R\setminus(U_0\cup U_1)\to \overline{\D}_{R^5}\setminus \D_{R_0}
\end{equation}
such that
\begin{itemize}
\item $g:\D_R\setminus(\overline{U}_0\cup \overline{U}_1)\to \D_{R^5}\setminus \overline{\D}_{R_0}$ is a quasi-regular map of degree $5$ with exactly one critical point $c$ and one critical value $v$; and
\item $g|_{\partial U_0}=f_0$, $g|_{\partial U_1}=f_1$ and $g(z)|_{\partial \D_R}=z^5$.
\end{itemize}
Define
\begin{equation}\label{equ-F-attr-to-sup}
F(z):=
\left\{
\begin{array}{ll}
f_0(z)  &~~~~~~~\text{if}~z\in U_0, \\
f_1(z)  &~~~~~~~\text{if}~z\in U_1, \\
g(z) &~~~~~~\text{if}~z\in\overline{\D}_R\setminus(U_0\cup U_1), \\
z^5 &~~~~~~\text{otherwise}.
\end{array}
\right.
\end{equation}
Then $F:\EC\to\EC$ is a quasi-regular map of degree $5$.

Define $E:=\EC\setminus\overline{\D}_R$. Then $F(E)\subset E$ and $\overline{\D}_R\setminus(U_0\cup U_1)\subset F^{-1}(E)$. Hence $\frac{\partial F}{\partial \overline{z}}=0$ holds in $E\cup\big(\EC\setminus F^{-1}(E)\big)$ a.e. By Lemma \ref{lema:qc}, there exists a quasiconformal map $\phi:\EC\to\EC$ fixing $\infty$ such that
\begin{equation}
f:=\phi\circ F\circ \phi^{-1}
\end{equation}
is a polynomial of degree $5$. Moreover, $f:\phi(U_0)\to\phi(\D_{R_0})$ is a cubic-like map which is quasiconformally conjugate to $f_0:U_0\to\D_{R_0}$. By the choice of $f_0$, the filled Julia set $K_0:=\phi(K(f_0))$ of $f|_{\phi(U_0)}$ is a Cantor set having Hausdorff dimension two (see \cite[Corollary 13]{GV73}). Hence $J(f)$ has Hausdorff dimension two. It suffices to prove that $J(f)$ is a Cantor set with bubbles.

\medskip
\textit{Step 5}. \textit{$J(f)$ is a Cantor set with bubbles}.
Note that $\phi(\overline{\D}_R\setminus(U_0\cup U_1))$ is contained in the superattracting basin of $\infty$ and $f:\phi(U_1)\to\phi(\D_{R_0})$ is a quadratic-like map which is quasiconformally conjugate to $f_1:U_1\to\D_{R_0}$ and hence to $\zeta\mapsto \zeta^2$. Therefore, the boundary of the filled Julia component $K_1$ of $f$ containing the superattracting fixed point $\phi(z_1)$ is a quasicircle.

Let $K'$ be a component of the filled Julia set of $f$ which is not the preimage of $K_1$ nor of any component of $K_0$. If $K'$ is not a point, then there exists a periodic filled Julia component $K''$ of $f$ such that $K''$ contains a critical point (see \cite{QY09} or \cite{KS09}). However, the remaining critical point $\phi(c)$ is attracted by $\infty$, which is impossible. Therefore, $J(f)$ is a Cantor set with bubbles.
\end{proof}

By Lemma \ref{lem:attr-or-para}, Julia sets of quadratic rational maps cannot be Cantor bubbles, and all cubic polynomials having Cantor bubble Julia sets are geometrically finite and hence these Julia sets have Hausdorff dimension strictly less than two. We believe that there exist cubic rational maps having Cantor bubble Julia sets with Hausdorff dimension two.

\section{Uniformization of Cantor bubble Julia sets}\label{sec:quasis}

In this section, we give a sufficient condition such that some Cantor bubble Julia sets are quasisymmetrically equivalent to Cantor round bubbles.

\subsection{Bonk's criterion and distortion estimates}

A homeomorphism $f: X \to Y$ between two metric spaces $(X, d_X)$ and $(Y, d_Y)$ is \textit{quasisymmetric} if there exists a homeomorphism $\eta:[0, \infty) \to [0, \infty)$ such that
\[
\frac{d_{Y}(f(x), f(y))}{d_{Y}(f(x), f(z))} \leq \eta\left(\frac{d_{X}(x, y)}{d_{X}(x, z)}\right)
\]
for all distinct points $x, y, z \in X$. In this case, $X$ and $Y$ are called \textit{quasisymmetrically equivalent}.
It is known from \cite[Theorem 11.14]{Hei01} that an orientation preserving homeomorphism from $\EC$ to itself is quasisymmetric if and only if it is quasiconformal.

Let $\mathrm{dist}_{\EC}$ and $\mathrm{diam}_{\EC}$ denote the spherical distance and diameter respectively. The \emph{relative distance} of two disjoint subsets $A,B \subset \widehat{\mathbb{C}}$ is
\[
\Delta(A, B):=\frac{\mathrm{dist}_{\EC}(A, B)}{\min \left\{\mathrm{diam}_{\EC}(A), \mathrm{diam}_{\EC}(B)\right\}}.
\]
A collection of Jordan curves $\MS=\{\gamma_{i}\}_{i \in \mathbb{N}}$ is called \emph{uniformly relatively separated} if $\inf _{i \neq j} \Delta(\gamma_{i}, \gamma_{j})>0$, and \emph{uniform quasicircles} if there exists $K \geq 1$ such that each $\gamma_{i}$ is a $K$-quasicircle.
To study the quasisymmetric uniformization of Cantor bubble Julia sets, Bonk's following criterion is very useful.

\begin{thm}[{\cite[Theorem 1.1]{Bon11}}]\label{thm:Bonk}
Suppose $\mathcal{S}=\{ \gamma_i: i\in \mathbb{N}\}$ is a family of Jordan curves in $\EC$ bounding pairwise disjoint closed Jordan domains. If $\mathcal{S}$ consists of uniformly relatively separated uniform quasicircles, then there exists a quasiconformal map $\phi:\widehat{\mathbb{C}}\to\widehat{\mathbb{C}}$ such that $\phi(\gamma_i)$ is a round circle for all $i\in \N$.
\end{thm}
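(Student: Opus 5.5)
The plan is to reduce to finitely many curves, use Koebe's uniformization of finitely connected domains, and then pass to a limit. For this to work, the quasiconformality constants at each finite stage must depend only on the quasicircle constant $K$ and on the separation constant $s:=\inf_{i\neq j}\Delta(\gamma_i,\gamma_j)>0$. If fewer than three curves are present the statement is elementary, since a single $K$-quasicircle, or two relatively separated ones, can be straightened directly by a quasiconformal map. So assume there are at least three curves.

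\emph{Step 1 (finite stage).} For each $n$, let $\Omega_n$ be the complement in $\EC$ of the closed Jordan domains bounded by $\gamma_1,\dots,\gamma_n$. By Koebe's theorem there is a conformal map $f_n:\Omega_n\to D_n$ onto a circle domain $D_n$. Normalize $f_n$ by three fixed points of $\Omega_1$ chosen on $\gamma_1,\gamma_2,\gamma_3$, far apart. Since each $\gamma_i$ is a quasicircle, $f_n$ extends homeomorphically to $\overline{\Omega}_n$.

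\emph{Step 2 (uniform quasisymmetry).} The central claim is that $f_n:\overline{\Omega}_n\to\overline{D}_n$ is $\eta$-quasisymmetric, with $\eta$ depending only on $K$ and $s$, not on $n$. To prove it I would use Schramm's \emph{transboundary modulus}. In this modulus each complementary disk is treated as a single ``point'' carrying one weight. It is conformally invariant, so it is preserved by $f_n$.

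I would prove a two-sided Loewner-type estimate for both $\overline{\Omega}_n$ and $\overline{D}_n$: if two continua $E,F$ have relative distance at most $t$, then their transboundary modulus is at least $\phi(t)>0$. Conversely, if the relative distance is large, the modulus is small. The lower bound is where the two hypotheses enter. Uniform relative separation prevents a short crossing of the gap between two holes from being cheap. Uniform quasicircles let one replace a hole by a comparable round disk, with bounded distortion, when estimating admissible densities.

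Comparing the two estimates across $f_n$ shows that $f_n$ is weakly quasisymmetric with uniform constants. The standard argument for connected doubling spaces then upgrades this to $\eta$-quasisymmetry. As a by-product, the image circles are themselves uniformly relatively separated.

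\emph{Step 3 (extension and limit).} Extend $f_n$ to a homeomorphism $F_n$ of $\EC$ by filling each hole $\gamma_i$, $i\le n$. On each hole use a quasiconformal extension of the boundary map $f_n|_{\gamma_i}$ into the corresponding round disk, for instance by conformal welding with the Beurling--Ahlfors extension. The boundary map is uniformly quasisymmetric by Step 2, so these extensions are uniformly quasiconformal. Gluing along quasicircles, which are removable for quasiconformal maps, shows that each $F_n$ is $K'$-quasiconformal with $K'=K'(K,s)$.

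The family $\{F_n\}$ is normalized at three points, so it is precompact. A subsequence converges uniformly to a $K'$-quasiconformal map $\phi$. For each fixed $i$, the sets $F_n(\gamma_i)$ are round circles for all $n\ge i$, and uniform limits of circles under this convergence are circles or points. Since $\phi$ is a homeomorphism, $\phi(\gamma_i)$ is a round circle.

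The main obstacle is Step 2, specifically the lower bound on transboundary modulus. My first attempt would be to build explicit test curve families of annular type and use the separation constant $s$ to bound the total weight any admissible density must place on the holes these curves meet.
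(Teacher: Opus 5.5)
The paper does not prove this statement; it quotes it from Bonk. Your skeleton is essentially the route of Bonk's proof: Koebe's theorem for finitely connected approximations, uniform distortion control via Schramm's transboundary modulus, quasiconformal filling of the holes, and a normal-family limit. Steps 1 and 3 are sound. Quasicircles are removable, so $F_n$ has the dilatation of the hole extensions. A three-point-normalized family of $K'$-quasiconformal maps has a homeomorphic $K'$-quasiconformal limit, and Hausdorff limits of the circles $F_n(\gamma_i)$ are circles. The gap is in Step 2, which carries all of the content.

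The first problem is the upper estimate you treat as unproblematic. ``Relative distance large $\Rightarrow$ transboundary modulus small'' is false. Take $E,F$ to be two short, far-apart arcs of the same $\gamma_i$. A curve from $E$ to $F$ through the interior of the hole meets $\Omega_n$ at most at its endpoints, so its transboundary length is just the weight $\rho_i$. Admissibility then forces $\rho_i\ge 1$, so the modulus is at least $1$ however large $\Delta(E,F)$ is. In your own ``holes are points'' picture, $E$ and $F$ are not even disjoint. The same failure occurs whenever a single complementary component has bounded relative distance to both continua. This is exactly the configuration that governs $f_n|_{\gamma_i}$, which is what Step 3 needs. So the comparison must be run with modified families, for instance curves forbidden to enter the components that touch or nearly touch both continua, or continua saturated by the holes they meet. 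You must then check that conformal invariance and both bounds survive the modification.

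The second problem is that the relative-separation hypothesis belongs to the other estimate, and this makes your plan circular on the $D_n$ side. A curve crosses a hole at the cost of one weight, so holes only create shortcuts. The lower (Loewner) bound therefore needs nothing but uniform fatness of the complementary components: spread $\rho_i$ as $\rho_i/\mathrm{diam}(B_i)$ over a dilate of $B_i$ and control the overlaps by Bojarski's inequality. Hence it holds uniformly for all circle domains, and in $\Omega_n$ it follows from the quasicircle hypothesis alone. Separation is what the upper bound requires; without it, chains of nearly touching holes give cheap shortcuts between far-apart continua. So you cannot prove a two-sided estimate for $\overline{D}_n$ at the outset, and separation of the image disks cannot be a by-product of that estimate. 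The non-circular order is as follows. First, combine the upper bound in $\Omega_n$ for curves joining two holes with the fatness-only lower bound in $D_n$; by conformal invariance this gives uniform relative separation of the image disks. Only then are the upper bound and linear local connectivity of $\overline{D}_n$ available for the weak-quasisymmetry argument. Alternatively, run that argument for $f_n^{-1}$, which needs only the lower bound in $D_n$ and the upper bound plus linear local connectivity in $\overline{\Omega}_n$. With these two repairs your outline becomes Bonk's proof. As written, the part you flag as the main obstacle, the lower bound, is the routine one.
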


To obtain the uniformly relatively separated property, we use the \textit{conformal modulus} (or \textit{modulus} in short) of annuli to control the relative distances.

\begin{lem}[{\cite[Lemma 2.1]{QYZ19}}]\label{lem:QYZ}
Let $A \subset \EC$ be an annulus with two boundary components $\gamma_1$ and $\gamma_2$. If the modulus of $A$ satisfies $\operatorname{mod}(A) \geq m > 0$, then there exists a constant $C(m) > 0$ depending only on $m$ such that $\Delta(\gamma_1, \gamma_2) \geq C(m) > 0$.
\end{lem}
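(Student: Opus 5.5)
We argue by contradiction through a Möbius-invariant quantity. The relevant one is the absolute cross-ratio in the chordal metric $q$ on $\EC$. The key classical input is Teichmüller's extremal problem. If a ring domain $A$ separates $\{z_1,z_2\}$ from $\{z_3,z_4\}$, then
\[
\operatorname{mod}(A)\le \Psi\big([z_1,z_2;z_3,z_4]\big),
\qquad
[z_1,z_2;z_3,z_4]:=\frac{q(z_1,z_3)\,q(z_2,z_4)}{q(z_1,z_2)\,q(z_3,z_4)} .
\]
Here $\Psi$ is an explicit increasing function with $\Psi(t)\to0$ as $t\to0$. After a Möbius normalization sending $z_1,z_3,z_4$ to $0,-1,\infty$ (the cross-ratio above is Möbius invariant), it is the modulus of the Teichmüller ring $\EC\setminus([-1,0]\cup[P,\infty])$ with $P$ determined by the cross-ratio. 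Its asymptotics are of the form $\tfrac1{2\pi}\log(16(P+1))$. Since spherical and chordal distances are uniformly comparable, we may replace $\mathrm{dist}_{\EC}$ and $\mathrm{diam}_{\EC}$ by their chordal versions at the cost of absolute constants.

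Set $\delta:=\mathrm{dist}(\gamma_1,\gamma_2)$ and $D:=\min\{\mathrm{diam}\,\gamma_1,\mathrm{diam}\,\gamma_2\}$. We may assume $\delta<D/10$, otherwise $\Delta(\gamma_1,\gamma_2)\ge 1/10$ and we are done. Choose $a\in\gamma_1$ and $b\in\gamma_2$ with $q(a,b)=\delta$, using compactness. Since $\gamma_1$ is connected and has diameter at least $D$, some point of $\gamma_1$ has distance at least $D/2$ from $a$. By continuity of $q(a,\cdot)$ along the connected set $\gamma_1$, there is therefore $x\in\gamma_1$ with $q(a,x)=D/2$ exactly. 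Likewise there is $y\in\gamma_2$ with $q(b,y)=D/2$. The annulus $A$ separates $\gamma_1\ni a,x$ from $\gamma_2\ni b,y$. Hence
\[
m\le \operatorname{mod}(A)\le \Psi\big([a,x;b,y]\big).
\]

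It remains to bound the cross-ratio, and this is the step where the choice of points matters. With a naive choice one only gets the bound $\delta/D^2$, which is useless when $D$ is small. Choosing $x,y$ at distance exactly $D/2$ avoids this. Indeed, $q(a,b)=\delta$ and $q(x,a)=q(b,y)=D/2$, so the triangle inequality gives $q(x,y)\le D/2+\delta+D/2=D+\delta$. Therefore
\[
[a,x;b,y]=\frac{q(a,b)\,q(x,y)}{q(a,x)\,q(b,y)}\le\frac{\delta(D+\delta)}{(D/2)^2}\le 5\,\frac{\delta}{D}.
\]
Combining the two displays gives $\Psi(5\delta/D)\ge m$, hence $\delta/D\ge \tfrac15\Psi^{-1}(m)$. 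So $\Delta(\gamma_1,\gamma_2)\ge C(m):=c_0\min\{1/10,\tfrac15\Psi^{-1}(m)\}$, where $c_0$ is the absolute constant absorbing the spherical–chordal comparison.

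The only genuinely nontrivial ingredient is the Teichmüller modulus bound in cross-ratio form. I would quote it from standard references (e.g. Lehto–Virtanen or Ahlfors' \emph{Lectures on quasiconformal mappings}) rather than reprove it. It follows from the extremality of the Teichmüller ring among rings separating two given pairs of points, combined with the Möbius invariance of the modulus.
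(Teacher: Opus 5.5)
Your proof is correct. The paper gives no argument of its own for this lemma; it is quoted from \cite{QYZ19}, so there is no in-paper proof to compare against. Reducing to a single four-point configuration via Teichm\"{u}ller's extremal problem is a standard way to prove it. Choosing $x,y$ with $q(a,x)=q(b,y)=D/2$ is exactly what bounds the cross-ratio by $5\delta/D$, no matter how small $D$ is.

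Two small remarks:
\begin{enumerate}
\item The normalization should send $z_1,z_2,z_4$ (not $z_1,z_3,z_4$) to $0,-1,\infty$. The image ring then separates $\{0,-1\}$ from $\{w_0,\infty\}$, with $|w_0|$ equal to your cross-ratio, which is precisely Teichm\"{u}ller's configuration.
\item The property of $\Psi$ you actually use is that the exact modulus of the Teichm\"{u}ller ring is increasing and tends to $0$ as $P\to 0$. The familiar closed-form bound $\frac{1}{2\pi}\log 16(P+1)$ never drops below $\frac{1}{2\pi}\log 16$, so it could not replace $\Psi$ when $m$ is small.
\end{enumerate}

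If you prefer not to quote the symmetrization theorem, your points $a,b,x,y$ already give a direct length--area proof. For each $r\in(\delta,D/2-\delta)$, the chordal circle $\{q(a,\cdot)=r\}$ meets both connected sets $\gamma_1\ni a,x$ and $\gamma_2\ni b,y$. It therefore contains an arc in $A$ joining the two boundary components. The Cauchy--Schwarz estimate over this family of circles gives $\operatorname{mod}(A)\le C_0/\log(D/\delta)$ for an absolute constant $C_0$ whenever $\delta<D/10$. This yields the lemma with the explicit constant $C(m)=c_0\min\{1/10,e^{-C_0/m}\}$.
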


To obtain the property of uniform quasicircles, we use the following distortion lemma, whose proof is based on Koebe's distortion theorem.
One can also obtain the same result by using \cite[Lemma 6.1]{QWY12}.

\begin{lem}[{\cite[Lemma 2.8]{QYY18}}]\label{lem:uni-quasi}
Let $U_i \Subset V_i \neq \mathbb{C}$ be a pair of Jordan disks, where $i = 1,2$. Suppose $\operatorname{mod}(V_2 \setminus \overline{U}_2) \geq m > 0$ and $f:V_1 \to V_2$ is a
conformal map with $f(U_1) = U_2$. If $\partial U_2$ is a $K$-quasicircle, then there is a constant $C(K,m) > 1$ depending only on $K$ and $m$ such that $\partial U_1$ is a $C(K,m)$-quasicircle.
\end{lem}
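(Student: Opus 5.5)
The plan is to prove the lemma through the Ahlfors bounded turning characterization of quasicircles: a Jordan curve $\gamma\subset\C$ is a $K$-quasicircle if and only if there is a constant $A\geq 1$ such that for any two points $x,y\in\gamma$, the smaller of the two subarcs of $\gamma$ joining them has diameter at most $A|x-y|$. Here $A$ and $K$ control each other quantitatively. It is therefore enough to show that $f^{-1}$, restricted to $\overline{U}_2$, distorts Euclidean distances by at most a factor $L=L(m)$ after a global rescaling. Bounded turning is invariant under such maps, so the constant becomes $L^2A$, and this gives the constant $C(K,m)$.

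\textbf{Step 1 (normalization).} Since $V_2\neq\C$ is a Jordan disk, there is a Riemann map $\psi:\D\to V_2$. The annulus $\D\setminus\psi^{-1}(\overline{U}_2)$ has modulus at least $m$ and separates $\psi^{-1}(\overline{U}_2)$ from $\partial\D$. We may also arrange $\psi(0)\in U_2$. By the Grötzsch/Teichmüller extremal problem, $\psi^{-1}(\overline{U}_2)\subset\D_r$ for some $r=r(m)<1$ depending only on $m$. Put $F_1:=f^{-1}\circ\psi:\D\to V_1$ and $F_2:=\psi$. Both are univalent on $\D$, and on $\overline{U}_2$ we have $f^{-1}=F_1\circ F_2^{-1}$.

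\textbf{Step 2 (two-sided Koebe estimate).} For any univalent $F:\D\to\C$ and any $z,w\in\overline{\D}_r$, I claim
\[
c(r)\,|F'(0)|\,|z-w|\leq|F(z)-F(w)|\leq C(r)\,|F'(0)|\,|z-w|.
\]
The upper bound follows by integrating the Koebe derivative bound $|F'(\zeta)|\leq|F'(0)|(1+|\zeta|)/(1-|\zeta|)^3$ along the segment from $z$ to $w$.

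For the lower bound I distinguish two cases. If $|z-w|$ is small compared with $1-r$, I apply the Koebe $1/4$ theorem to $F$ on the disk $D(z,(1-r)/2)$, using the lower derivative bound $|F'(z)|\geq|F'(0)|(1-r)/(1+r)^3$. If $|z-w|\geq\delta(r)$, I use the growth theorem for the normalized map $\zeta\mapsto(F(\zeta)-F(z))/\ldots$ after a disk automorphism moving $z$ to $0$: the image of the circle of hyperbolic radius $\delta$ about $z$ stays a definite distance from $F(z)$. I expect this uniform lower bound to be the main technical point, though it is standard.

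\textbf{Step 3 (conclusion).} Apply Step 2 to $F_1$ and to $F_2$ on $\psi^{-1}(\overline{U}_2)\subset\overline{\D}_r$. This gives, for $x,y\in\overline{U}_2$,
\[
L^{-1}\lambda|x-y|\leq|f^{-1}(x)-f^{-1}(y)|\leq L\lambda|x-y|,
\]
with $\lambda=|F_1'(0)|/|F_2'(0)|$ and $L=L(r(m))$.

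Now take $x',y'\in\partial U_1$. Write $x'=f^{-1}(x)$ and $y'=f^{-1}(y)$, and let $\beta$ be the smaller subarc of $\partial U_2$ joining $x$ and $y$. The image $f^{-1}(\beta)$ is a subarc of $\partial U_1$ joining $x'$ and $y'$, and
\[
\operatorname{diam}f^{-1}(\beta)\leq L\lambda\operatorname{diam}\beta\leq L\lambda A|x-y|\leq L^2A|x'-y'|.
\]
Hence $\partial U_1$ has bounded turning with constant $L^2A$, where $A=A(K)$. Therefore $\partial U_1$ is a $C(K,m)$-quasicircle.
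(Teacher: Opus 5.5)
Your proof is correct and follows the route the paper has in mind: the lemma is quoted from \cite{QYY18}, whose proof rests on Koebe's distortion theorem, and your argument runs the same way. Gr\"{o}tzsch puts $\psi^{-1}(\overline{U}_2)$ inside $\D_{r(m)}$, Koebe makes $f^{-1}$ bi-Lipschitz up to the scale factor $\lambda$ on $\overline{U}_2$, and Ahlfors' bounded turning condition then transfers with constant $L^2A$. The lower bound in your Step 2 needs no case split: since $F$ is injective on all of $\D$, Koebe's $1/4$ theorem on $D(z,\min\{|w-z|,1-r\})$ already gives $|F(w)-F(z)|\geq\frac{1-r}{8}|F'(z)|\,|w-z|$ for all $z,w\in\overline{\D}_r$.
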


\subsection{Proof of Theorem \ref{thm:roundbubbles}}

The \emph{$\omega$-limit set} $\omega(z)$ of a point $z\in \widehat{\mathbb{C}}$ under the rational map $f$ is the set of accumulation points
of the forward orbit of $z$, i.e.,
\[
\omega(z):=\big\{z'\in \widehat{\mathbb{C}}: \exists\,\{k_{n}\}_{n \in \mathbb{N}} \text{ such that } \lim _{n \to \infty} f^{\circ k_{n}}(z)=z'\big\}.
\]
If $f$ is a rational map having Cantor bubble Julia set $J(f)$, we denote
$$
\mathcal{\MU}(f):=\big\{\text{all simply connected Fatou components of $f$}\big\}
$$
and let $\MS(f):=\{\partial U: U\in\MU(f)\}$. Hence $\MS(f)$ consists of countably many disjoint Jordan curves.

\begin{lem}[{Uniformly relatively separated}]\label{lem:separated}
Let $f$ be a rational map having a Cantor bubble Julia set. If the boundary of every simply connected periodic Fatou component is disjoint from the $\omega$-limit sets of the critical points of $f$ in $J(f)$, then the curves in $\mathcal{S}(f)$ are uniformly relatively separated.
\end{lem}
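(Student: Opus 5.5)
The plan is to find, for every curve $\gamma=\partial W\in\MS(f)$, an annulus of modulus bounded below by a uniform $m>0$ that separates $\gamma$ from all the other curves of $\MS(f)$, and then to apply Lemma \ref{lem:QYZ}. If $\gamma_i\neq\gamma_j$ and $A_i$ is an annulus around $\gamma_i$ with $\operatorname{mod}(A_i)\ge m$ whose outer boundary separates $\gamma_i$ from $\gamma_j$, then
\[
\Delta(\gamma_i,\gamma_j)\geq c\,\Delta(\gamma_i,\partial_{\rm out}A_i)\geq c\,C(m).
\]
Here we use that $\operatorname{dist}(\gamma_i,\gamma_j)\ge \operatorname{dist}(\gamma_i,\partial_{\rm out}A_i)$. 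We also need a comparison of diameters, which we obtain by choosing $A_i$ on the side of $\gamma_i$ where it is the smaller set. The key step is therefore to construct these annuli uniformly.

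\emph{Step 1 (periodic curves).} By Sullivan's theorem and Lemma \ref{lem:attr-or-para}, there are only finitely many periodic simply connected Fatou components $W_1,\dots,W_k$, and every $W\in\MU(f)$ is eventually mapped onto one of them. For each $W_j$, let $A_j$ be the part of $U$ (the completely invariant component) lying in a thin collar outside $\partial W_j$. We choose $A_j$ as follows. The closed set $P_J:=\bigcup_{c\in\Crit(f)\cap J(f)}\omega(c)$ together with the postcritical points that lie in Fatou components is disjoint from $\partial W_j$. Near $\partial W_j$ the postcritical set can only accumulate at points of $P_J$, because the critical orbits in $U$ converge to the attracting or parabolic point of $U$, and the finitely many critical orbits landing in $J(f)$ either converge to their $\omega$-limits or are finite. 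Hence there is an annular neighbourhood $N_j$ of $\partial W_j$, with essential boundary curves chosen inside $U$ (possible since $\partial W_j$ is a Julia component and $J(f)$ is nearby totally disconnected apart from the Jordan curves), that avoids $P(f)$ except possibly for finitely many points. Shrinking $N_j$, we may take it to be disjoint from $P(f)$ and to separate $\partial W_j$ from every other curve of $\MS(f)$ that it does not contain. This is done by choosing the outer boundary through $U$ so that it avoids the finitely many large curves; small curves lying inside $N_j$ are handled by Step 2.

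\emph{Step 2 (pullback).} Let $W\in\MU(f)$ with $f^{\circ n}(W)=W_j$ and $f^{\circ k}(W)\notin\{W_1,\dots,W_k\}$ for $k<n$. Since $N_j\cap P(f)=\emptyset$, each component of $f^{-n}(N_j)$ is an annulus covering $N_j$ with bounded degree. The degree is in fact $1$ when the component of $f^{-n}$ of the disk $N_j\cup W_j$ contains no critical points. Critical points in the pullback can occur only finitely often along the orbit, since $W$ itself is simply connected with no critical value on its boundary, and critical points in $J$ lie in point components by Lemma \ref{lem:attr-or-para}(a). So the modulus of the pulled-back annulus around $\partial W$ is at least $\operatorname{mod}(N_j)/d^{M}$. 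This annulus separates $\partial W$ from every curve not contained in it. For curves inside it, the same construction applied to that curve, together with the nesting structure, gives separation.

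\emph{Main obstacle.} The delicate point is the uniform control of the degree of the pullback, namely showing that the disks $f^{-k}(N_j\cup W_j)$ hit critical points only a bounded number of times. This is where the hypothesis on $\omega$-limit sets enters. My first attempt is to shrink $N_j$ so that the pullbacks of $N_j\cup W_j$ never meet $\Crit(f)\cap J(f)$, and to argue by contradiction: if such disks contained critical points for infinitely many depths, then forward images of those critical points would land near $\partial W_j$, forcing $\omega(c)\cap\partial W_j\neq\emptyset$.
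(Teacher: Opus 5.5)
Your overall strategy matches the paper's: collars $A_k=Q_k\setminus\overline{P}_k$ around the periodic bubbles that avoid $P(f)$, pulled back with bounded degree, then Lemma~\ref{lem:QYZ}. However, the step that turns annuli around individual curves into \emph{pairwise} separation is missing.

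Your opening formulation cannot hold as stated. Every $\partial W\in\MS(f)$ is accumulated by infinitely many other curves of $\MS(f)$. Indeed, $\partial W$ is not open in $J(f)$, since otherwise some iterate would map this Julia component onto the disconnected set $J(f)$. Moreover, every small disk meeting $J(f)$ meets some preimage of a periodic bubble boundary. So every pulled-back annulus around $\partial W$ contains other curves, and the whole content of the lemma lies in your unproved appeal to the ``nesting structure''.

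What you need is the following. Let $W_1\neq W_2$ first land on $P_{k_1},P_{k_2}$ at times $n_1\le n_2$, and let $D_i$ be the component of $f^{-n_i}(Q_{k_i})$ containing $W_i$. Then $W_2\not\subset D_1$ or $W_1\not\subset D_2$. To prove it:
\begin{enumerate}
\item Take $\partial Q_k\subset U$, so that every bubble lies either inside or outside each $D_i$.
\item Take the $\overline{Q}_k$ pairwise disjoint, so that $Q_k$ contains no periodic bubble other than $P_k$.
\item Since $A_k\cap P(f)=\emptyset$, Riemann--Hurwitz shows that each component of $f^{-n}(Q_k)$ is a disk containing exactly one component of $f^{-n}(P_k)$.
\item If $W_2\subset D_1$, then $W':=f^{\circ n_1}(W_2)$ is a bubble in $A_{k_1}$, hence non-periodic. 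Also $f^{\circ n_1}(D_2)$ is the component of $f^{-(n_2-n_1)}(Q_{k_2})$ containing $W'$.
\item If moreover $W_1\subset D_2$, this disk contains $P_{k_1}$. Then $f^{\circ(n_2-n_1)}(P_{k_1})$ is a periodic bubble in $Q_{k_2}$, so it equals $P_{k_2}$. The disk would then contain two distinct preimages $W'$ and $P_{k_1}$ of $P_{k_2}$, a contradiction.
\end{enumerate}
Once one $D_i\setminus\overline{W}_i$ separates, apply Lemma~\ref{lem:QYZ} directly to $\EC\setminus(\overline{W}_1\cup\overline{W}_2)$, which contains it as an essential subannulus. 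Your diameter comparison is then unnecessary; you could not choose the side of the annulus anyway, since it is forced to lie outside $W$. The paper's sketch compresses exactly this point into ``thus $\operatorname{mod}(A)\geq m/D$'', but it is the step your proposal leaves open.

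The degree bound, which you single out as the main obstacle, is not delicate, and the reason you give does not actually bound it. Since $A_k\cap P(f)=\emptyset$, $f^{\circ n}$ has no critical points in $D\setminus\overline{W}$ at all. So your contradiction argument about critical points of $J(f)$ entering the pullbacks is moot, and $\deg(f^{\circ n}:D\to Q_k)=\deg(f^{\circ n}:W\to P_k)$.

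The bubbles $W,f(W),\dots,f^{\circ(n-1)}(W)$ are pairwise distinct, since a repetition would make one of them periodic, and all are simply connected. Hence $\deg(f^{\circ n}|_W)=\prod_j(1+c_j)$, where $c_j$ is the number of critical points of $f$ in $f^{\circ j}(W)$ counted with multiplicity, and $\sum_j c_j\le 2d-2$. This gives the bound $2^{2d-2}$, which plays the role of the paper's constant $D$. The hypothesis on $\omega$-limit sets is used only to make the collars $A_k$ disjoint from $P(f)$.
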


\begin{proof}
We only give a sketch of the proof here since the idea is similar to \cite[Proposition 3.6]{QYZ19}.
Let $\mathcal{P}=\{P_1,\cdots,P_n\}$ be the collection of all simply connected periodic Fatou components of $f$. Iterating $f$ several times if necessary, we assume that the period of each $P_i$ is one. For $1\leq i\leq n$, let $Q_i$ be a simply connected domain containing $P_i$ such that $Q_1,\cdots,Q_n$ are mutually disjoint and each annulus $A_i:=Q_i\setminus \overline{P}_i$ contains no points in the critical orbits. Denote $m:=\min_{1\leq i\leq n}\textup{mod}(A_i)>0$.

Note that $f$ has only finitely many critical points. By the choice of $Q_i$, there exists a number $D\geq 1$ depending only on $f$ such that for any different $U_1,U_2\in\MU(f)$, there exist two minimal integers $n_1,n_2\geq 0$, and two simply connected domains $V_1$ and $V_2$ surrounding $U_1$ and $U_2$ respectively such that for $i=1,2$,
\begin{itemize}
\item $f^{\circ n_i}(U_i)=P_{k_i}\in\MP$ and $f^{\circ n_i}(V_i)=Q_{k_i}$ for some $1\leq k_1,k_2\leq n$; and
\item $f^{\circ n_i}:V_i\setminus \overline{U}_i\to A_{k_i}=Q_{k_i}\setminus \overline{P}_{k_i}$ is a covering map between annuli of degree at most $D$.
\end{itemize}
Thus we have $\textup{mod}(A)\geq m/D$, where $A:=\EC\setminus(\overline{U}_1\cup \overline{U}_2)$.
By Lemma \ref{lem:QYZ}, $\overline{U}_1$ and $\overline{U}_2$ are relatively separated with the relative distance $\Delta(\partial U_1,\partial U_2)$ depending only on $m$ and $D$.
Hence the curves in $\mathcal{S}(f)$ are uniformly relatively separated.
\end{proof}

\begin{defi}[Bounded degree condition]
Let $f$ be a rational map having an attracting periodic Fatou component $U$. We say that $f$ satisfies the \emph{bounded degree condition} on $\partial U$ if there exists $D\geq 1$ such that for any $z \in \partial U$, there is a number $\varepsilon_z>0$ such that for any integer $n \geq 0$ and any component $V_n(z)$ of $f^{-n}(B(z, \varepsilon_z))$ intersecting with $\partial U$, $V_n(z)$ is simply connected and $\deg(f^{\circ n}:V_n(z) \to B(z, \varepsilon_z))\leq D$.
\end{defi}

Here $B(z, \varepsilon_z)$ is the spherical disk centered at $z$ with radius $\varepsilon_z$.
We need the following result to obtain that each curve in $\MS(f)$ is a quasicircle.

\begin{lem}[{\cite[Proposition 6.1]{QWY12}}]\label{lem:BD-condition}
Let $f$ be a rational map having a simply connected attracting periodic Fatou component $U$. If $f$ satisfies the bounded degree condition on $\partial U$, then $\partial U$ is locally connected. If further $\partial U$ is a Jordan curve, then $\partial U$ is a quasicircle.
\end{lem}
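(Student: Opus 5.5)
This result is cited from \cite[Proposition 6.1]{QWY12}, but here is how I would prove it. The argument has two parts: first local connectivity, via shrinking of pulled-back disks; then the quasicircle property, via a dynamical three-point condition (Ahlfors' criterion).

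\emph{Local connectivity.} Fix $z\in\partial U$. By compactness, choose finitely many disks $B(z_j,\varepsilon_{z_j}/2)$ covering $\partial U$. Let $P$ be the period of $U$. Since $U$ is an attracting basin, the classical approach (as for hyperbolic maps) is to construct a fundamental annulus: take a Jordan domain $W\Subset U$ around the attracting point with $f^{\circ P}(\overline W)\subset W$. Then $U=\bigcup_k f^{-kP}(W)\cap U$, and the boundaries of $U_k:=$ the component of $f^{-kP}(W)$ containing the attracting point exhaust $U$.

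Pick $\zeta\in \partial U$ and a component $V_n$ of $f^{-n}(B(z_j,\varepsilon_{z_j}))$ meeting $\partial U$. The bounded degree condition says $f^{\circ n}:V_n\to B(z_j,\varepsilon_{z_j})$ is a proper map of degree $\le D$ between simply connected domains. A standard modulus argument (Koebe for branched coverings of bounded degree, e.g. via the annulus $B(z_j,\varepsilon)\setminus \overline{B(z_j,\varepsilon/2)}$, whose preimage contains an annulus of modulus $\ge m/D$) together with the fact that $J(f)$ carries no invariant line field / that $\mathrm{diam}$ of the pulled-back components tends to $0$ as $n\to\infty$ then applies. The shrinking itself follows from normality: the pullbacks avoid the attracting cycle, so a limit of the inverse branches is constant on the disk.

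Hence points of $\partial U$ have arbitrarily small connected neighborhoods in $\overline U$ built from the components $V_n\cap \overline{U}$. Those intersections are connected because $f^{\circ n}$ maps $U$ to $U$ and the preimage of the connected set $B\cap\overline U$ (after shrinking $B$ to make this connected, using a Riemann map/curve structure near $f^{\circ n}$-images) has boundedly many components. This gives local connectivity of $\partial U$.

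\emph{Quasicircle.} Assume now $\partial U$ is a Jordan curve. By Ahlfors' three-point condition, it suffices to show that for $x,y\in\partial U$ the smaller arc $\gamma_{xy}$ satisfies $\mathrm{diam}\,\gamma_{xy}\le C|x-y|$. Argue by contradiction with a sequence $(x_k,y_k)$ violating this with constants $C_k\to\infty$, so $|x_k-y_k|\to0$. Take the largest $n_k$ such that a ball $B_k$ of radius comparable to $\mathrm{diam}\,\gamma_{x_ky_k}$ around $x_k$ lies in some $V_{n_k}(z_j)$. Then push forward by $f^{\circ n_k}$, a degree $\le D$ map from $V_{n_k}$ onto $B(z_j,\varepsilon_{z_j})$. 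Bounded-degree distortion (the Koebe-type estimate for proper maps of degree $\le D$ on annuli of definite modulus, compare Lemma~\ref{lem:uni-quasi}) gives that the images $x_k',y_k'$ and the arc image keep comparable ratio distortion: $|x'_k-y'_k|/\mathrm{diam}(f^{\circ n_k}\gamma_{x_ky_k})\to0$, while $\mathrm{diam}$ of the image arc stays bounded below by maximality of $n_k$.

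Passing to limits, we obtain two points $x',y'$ on the Jordan curve $\partial U$ coinciding, yet the image arcs between them (which are arcs of $\partial U$, since $f$ maps $\partial U$ to $\partial f(U)$, a finite collection of Jordan curves) have diameter bounded below and are not the whole curve. Their complementary arcs also do not shrink, because of bounded degree and the fact that $f^{\circ n_k}$ is a local covering of degree $\le D$ on $\partial U$ near these points. This contradicts $\partial U$ being a Jordan curve, where small distance forces a small arc.

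The main obstacle is the second step: controlling which arc of $\partial U$ the image of $\gamma_{x_ky_k}$ becomes, i.e.\ ensuring that the bounded-degree pushforward does not swap the short arc with the long one. I would handle this by working in the Riemann/Böttcher coordinate of $U$, where $f^{\circ P}$ is conjugate on $\partial U$ to a degree-$d$ circle covering. In that coordinate, arcs of angular length less than $1/d$ map injectively, so the short-arc structure is preserved up to the bounded number of folds.
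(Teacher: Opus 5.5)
The paper does not prove this lemma; it quotes it from \cite[Proposition 6.1]{QWY12}. Judged on its own, your sketch has a genuine gap in each half.

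\emph{Local connectivity.} Your argument yields at most that the components of $f^{-n}(B(z_j,\varepsilon_{z_j}/2))$ meeting $\partial U$ have diameters tending to $0$ uniformly. Even this should be quoted as the bounded-degree shrinking lemma (Lyubich--Minsky, Tan--Yin), not derived from ``limits of inverse branches'': for degree $>1$ there are no inverse branches, and the degree bound is used essentially. The appeal to the absence of invariant line fields is irrelevant and not known in this generality.

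The passage from shrinking to local connectivity then fails. Connectedness of $V_n\cap\overline U$ is not justified. A bounded number of components is not connectedness. Arranging $B\cap\overline U$ to be connected presupposes the local structure you are trying to prove. And $f^{\circ n}(U)=U$ only when $P\mid n$. Worse, small connected neighbourhoods in $\overline U$ only give local connectivity of $\overline U$, which does not imply that of $\partial U$. For $U=(0,1)^2\setminus\bigcup_{n\ge 2}\{1/n\}\times(0,\tfrac{1}{2}]$ the closure is a square, yet $\partial U$ is not locally connected at $(0,\tfrac{1}{4})$.

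What is missing is a mechanism exhibiting $\partial U$ as a uniform limit of curves. For example:
\begin{enumerate}
\item Take a closed curve $\beta_0\subset U$ near $\partial U$, say the image of a circle $|w|=r$ under the Riemann map conjugating $f^{\circ P}|_U$ to a Blaschke product, with no critical values in $r\le |w|<1$.
\item Join $\beta_0$ by a homotopy in $U$ to a lift $\beta_1\subset f^{-P}(\beta_0)$, and lift the homotopy successively to obtain curves $\beta_n$.
\item At a deep level, cut the homotopy into finitely many pieces, each inside some $B(z_j,\varepsilon_{z_j}/2)$.
\item A component of $f^{-n}(B(z_j,\varepsilon_{z_j}))$ containing a later pullback of such a piece meets $U$. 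It cannot lie in $U$, since its image contains the Julia point $z_j$. Hence it meets $\partial U$ and the bounded degree condition applies.
\end{enumerate}
Then the homotopy tracks become uniformly small, $\beta_n$ converges uniformly, and $\partial U$ is a continuous image of a circle.

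\emph{Quasicircle.} Your architecture is sound: three-point condition, contradiction, and pushing the short arc $\gamma_k$ (of diameter $R_k$) forward by a bounded-degree iterate chosen maximal at scale $R_k$. But the closing contradiction is not proved.

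Your reason that the complementary arcs do not shrink (bounded degree plus ``local covering of degree $\le D$ on $\partial U$'') does not give it. The long arc upstairs leaves $V_{n_k}$, so nothing controls its image. Also, $f^{\circ n_k}$ has degree up to $D$ on $V_{n_k}$, so images of different pieces of $\partial U\cap V_{n_k}$ may overlap. Your proposed remedy fails too: $f^{\circ n_k}|_{\partial U}$ is a circle covering of degree $d^{n_k/P}\to\infty$, so there are no folds and an injectivity scale $1/d$ is irrelevant.

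The missing ingredient is confinement. Shrink all $\varepsilon_z$ so that $2\varepsilon_z<\mathrm{diam}\,\partial U$ (this preserves the bounded degree condition), and take $n_k\in P\mathbb{Z}$. Then:
\begin{enumerate}
\item $f^{\circ n_k}|_{\partial U}$ is conjugate to a Blaschke product on $\partial\mathbb{D}$, hence a circle covering. So $f^{\circ n_k}(\gamma_k)$ is a monotone path in $\partial U$ from $x'_k$ to $y'_k$.
\item Lying in $B(z_j,\varepsilon_{z_j})$, this path cannot wrap around. So it is one of the two arcs of $\partial U$ between $x'_k$ and $y'_k$.
\item The other arc contains $\partial U\setminus B(z_j,\varepsilon_{z_j})$, whose diameter is at least $\mathrm{diam}\,\partial U-2\varepsilon_{z_j}>0$.
\end{enumerate}
Thus both arcs have diameter bounded below while $|x'_k-y'_k|\to 0$, contradicting that $\partial U$ is a Jordan curve.

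Three smaller points. The ball on which you control the degree must contain all of $\gamma_k$, so its radius must be at least $R_k$. The limit $|x'_k-y'_k|\to0$ already follows from Schwarz--Pick. The degree bound is needed for the lower bound on $\mathrm{diam}\,f^{\circ n_k}(\gamma_k)$. It enters via $\mathrm{mod}(V\setminus K')\ge \mathrm{mod}(B\setminus K)/D$, combined with the bound on the size of $V_{n_k}$ relative to $R_k$ coming from maximality of $n_k$.
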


We shall use the criterion in Lemma \ref{lem:BD-condition} to prove the following result.

\begin{lem}[{Uniform quasicircles}]\label{uniform_quasicircle-1}
Let $f$ be a rational map having a Cantor bubble Julia set. If the boundary of every simply connected periodic Fatou component is disjoint from the $\omega$-limit sets of the critical points of $f$, then the curves in $\mathcal{S}(f)$ are uniform quasicircles.
\end{lem}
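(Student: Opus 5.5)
First I show that each periodic curve is a quasicircle, then I pull this back along preimages with uniform control. Let $\MP=\{P_1,\dots,P_n\}$ be the simply connected periodic Fatou components. There are finitely many by Lemma \ref{lem:attr-or-para} and Sullivan's theorem. Passing to an iterate, I assume each $P_i$ is fixed.

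\textbf{Periodic components.} By Lemma \ref{lem:attr-or-para}, each $P_i$ is an attracting or parabolic basin. Its boundary is a Jordan curve that contains no critical points, since those lie in point components. I check the bounded degree condition on $\partial P_i$. By hypothesis $\partial P_i\cap\omega(c)=\emptyset$ for every critical point $c$ in $J(f)$. Critical points in Fatou components have orbits converging to attracting or parabolic cycles. The attracting cycles are not on $\partial P_i$. A parabolic point on $\partial P_i$ would be an $\omega$-limit of a critical point, which the hypothesis, read for all critical points, excludes. Thus the closure of the postcritical set is disjoint from $\partial P_i$ after discarding finitely many orbit points, and only finitely many critical orbit points can land on or near $\partial P_i$. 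Since $f$ has no critical points on $\partial P_i$ and $f(\partial P_i)=\partial P_i$, I can choose $\varepsilon_z$ uniformly so that $B(z,\varepsilon_z)$ avoids the postcritical set, apart from points that never return. Then every pullback component meeting $\partial P_i$ is simply connected and maps with degree bounded by a constant $D$.

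Lemma \ref{lem:BD-condition} then shows that $\partial P_i$ is a quasicircle. When $P_i$ is parabolic, Lemma \ref{lem:BD-condition} as stated covers only attracting components. In that case I instead straighten $\partial P_i$ via Theorem \ref{thm:McM1988} and Step 1 of the proof of Theorem \ref{thm:criterion}. This gives a quasiconformal image of $J(h)$ for a map $h$ with two completely invariant basins, whose Jordan curve Julia set is a quasicircle, for example by \cite{Ste97}. Let $K_0$ be the maximum of the resulting constants.

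\textbf{Preimages.} As in Lemma \ref{lem:separated}, I choose disjoint Jordan disks $Q_i\supset\overline{P}_i$ with annuli $A_i=Q_i\setminus\overline{P}_i$ free of critical values of all iterates. This is possible precisely because of the $\omega$-limit hypothesis. Let $m=\min_i\textup{mod}(A_i)$. Now take any $U\in\MU(f)$ and the minimal $k\ge0$ with $f^{\circ k}(U)=P_i$. Let $V$ be the component of $f^{-k}(Q_i)$ containing $U$. The map $f^{\circ k}:V\to Q_i$ is proper with all its critical values inside $P_i$, which is not a problem. Removing the critical values, $f^{\circ k}:V\setminus\overline U\to A_i$ is a covering of degree $d\le D$, and $V$ is simply connected.

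\textbf{The main obstacle.} The difficulty is that $f^{\circ k}$ need not be conformal on $V$, so Lemma \ref{lem:uni-quasi} does not apply directly. To handle this, I split the pullback at its last critical step. By minimality of $k$ and the choice of the $Q_i$, there is a bounded number $N$ depending only on $f$ such that $f^{\circ k}|_V$ factors as $g\circ\psi$. Here $\psi=f^{\circ(k-j)}$ is conformal from $V$ onto a Jordan disk $W$ around $U'=f^{\circ(k-j)}(U)$, and $g=f^{\circ j}:W\to Q_i$ with $j\le N$ ranges over finitely many branched maps. Those branched pullbacks of $\partial P_i$ are finitely many Jordan curves, each a quasicircle by the bounded degree argument above, since they avoid critical points. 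Let $K_1$ be the maximum of their constants. The modulus of $W\setminus\overline{U'}$ is at least $m/D$. Lemma \ref{lem:uni-quasi} applied to $\psi$ then makes $\partial U$ a $C(K_1,m/D)$-quasicircle, uniformly in $U$. Verifying this factorization, namely that critical points are only encountered within boundedly many steps of landing, is the step I expect to require the most care.
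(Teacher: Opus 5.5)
This is essentially the paper's argument. The periodic boundaries are shown to be quasicircles via the bounded degree condition and Lemma~\ref{lem:BD-condition}, and everything else is pulled back conformally with a modulus bound and Lemma~\ref{lem:uni-quasi}. The paper avoids your factorization step by taking as base set the finitely many $P\in\MU(f)$ that meet the critical orbits, with an annulus free of critical orbit points chosen around each. The first landing map $f^{\circ n}:V\to Q$ is then automatically conformal. Your ``first critical component'' $U'$ is one of these finitely many components, so the bound $j\le N$ you flagged as delicate is immediate.

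Two corrections are needed.

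\textbf{The parabolic case.} It never occurs. If $P_i$ were a parabolic basin, the cycle of $P_i$ would contain a critical point $c$, and $\omega(c)$ would be the parabolic cycle, which meets $\partial P_i$. That contradicts the hypothesis; you had already used the same hypothesis to exclude parabolic points on $\partial P_i$. The argument you give for this case would not work anyway. \cite{Ste97} only yields a Jordan curve, not a quasicircle. Moreover, a quasiconformal straightening of a parabolic basin boundary onto a hyperbolic Julia set is impossible in general: a one-petal parabolic point creates a zero-angle cusp, so that boundary is not a quasicircle.

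\textbf{The bounded degree check.} This is cleaner than your ``apart from points that never return'' phrasing suggests. Critical points in $J(f)$ lie in point components, whose images are point components, and Fatou critical orbits stay in the Fatou set. Together with the hypothesis, this gives $P(f)\cap\partial P_i=\emptyset$. Hence every pullback of a small disk centered on $\partial P_i$ is univalent, so $D=1$ works. This is more elementary than the paper's appeal to Ma\~{n}\'{e}'s theorem and \cite[Lemma 2.7]{QYZ19}.
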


\begin{proof}
Let $U\in\MU(f)$ be a periodic Fatou component. Since $\partial U$ is disjoint with the $\omega$-limit sets of all critical points of $f$, we conclude that $f$ has no parabolic periodic points on $\partial U$. By \cite[Theorem II]{Man93} and \cite[Lemma 2.7]{QYZ19}, $f$ satisfies the bounded degree condition on $\partial U$. By Lemma \ref{lem:BD-condition}, $\partial U$ is a quasicircle since $\partial U$ is a Jordan curve. Hence the curves in $\mathcal{S}(f)$ are quasicircles and it sufficient to prove that they are uniform quasicircles. We use a similar idea to \cite[Proposition 3.4]{QYZ19}.

Let $\mathcal{P}'=\{P_1,\cdots,P_\ell\}$ be the collection of all Fatou components in $\MU(f)$ such that each of them intersects with the critical orbits of $f$. Then there exists a constant $K'>1$ such that $\partial P_i$ is a $K'$-quasicircle for all $1\leq i\leq \ell$. For each $i$, let $Q_i$ be a Jordan domain containing $P_i$ such that $Q_1,\cdots,Q_\ell$ are mutually disjoint and each annulus $A_i:=Q_i\setminus \overline{P}_i$ contains no points in the critical orbits. Denote $m:=\min_{1\leq i\leq \ell}\textup{mod}(A_i)>0$.

By the definition of $\MP'$ and the choice of $Q_i$, for any Fatou component $U\in \MU(f)\setminus \MP'$, there exists a minimal integer $n\geq 1$ and a simply connected domain $V$ such that
\begin{itemize}
\item $f^{\circ n}(U)=P_{k}\in\MP'$ and $f^{\circ n}(V)=Q_{k}$ for some $1\leq k\leq \ell$; and
\item $f^{\circ n}:V\to Q_k$ is conformal.
\end{itemize}
By Lemma \ref{lem:uni-quasi}, the boundary $\partial U$ is a $K$-quasicircle, where $K=C(K',m)$ is a constant depending only on $K'$ and $m$.
By the arbitrariness of $U$, this implies that the curves in $\mathcal{S}(f)$ are uniform quasicircles.
\end{proof}

\begin{proof}[Proof of Theorem \ref{thm:roundbubbles}]
By Lemmas \ref{lem:separated} and \ref{uniform_quasicircle-1}, the curves in $\mathcal{S}(f)$ are uniform quasicircles and uniformly relatively separated.
According to Theorem \ref{thm:Bonk}, there exists a quasiconformal map $\phi: \widehat{\mathbb{C}} \to \widehat{\mathbb{C}}$ such that $\phi(\partial U)$ is a round circle for all $U \in \mathcal{U}(f)$. Hence $J(f)$ is quasisymmetrically equivalent to a Cantor set with round bubbles.
\end{proof}

In Example \ref{exam:bubble-para}, the cubic rational map $f_a$ has a superattracting fixed point $\infty$ and a parabolic fixed point $1$ with multiplier $1$.
If $-1-a>0$ is small enough, then $J(f_a)$ is a Cantor set with bubbles. Note that the boundary of every simply connected periodic Fatou component of $f_a$ is disjoint from the $\omega$-limit sets of the critical points. Thus $J(f_a)$ is quasisymmetrically equivalent to a Cantor set with round bubbles.

\bibliographystyle{amsalpha}
\bibliography{E:/Latex-model/Ref1}

\providecommand{\bysame}{\leavevmode\hbox to3em{\hrulefill}\thinspace}
\providecommand{\MR}{\relax\ifhmode\unskip\space\fi MR }
\providecommand{\MRhref}[2]{%
  \href{http://www.ams.org/mathscinet-getitem?mr=#1}{#2}
}
\providecommand{\href}[2]{#2}
\begin{thebibliography}{QWY12}

\bibitem[BF14]{BF14a}
B.~Branner and N.~Fagella, \emph{Quasiconformal surgery in holomorphic
  dynamics}, Cambridge Studies in Advanced Mathematics, vol. 141, Cambridge
  University Press, Cambridge, 2014.

\bibitem[BFH92]{BFH92}
B.~Bielefeld, Y.~Fisher, and J.~Hubbard, \emph{The classification of critically
  preperiodic polynomials as dynamical systems}, J. Amer. Math. Soc. \textbf{5}
  (1992), no.~4, 721--762.

\bibitem[BH92]{BH92}
B.~Branner and J.~H. Hubbard, \emph{The iteration of cubic polynomials. {II}.
  {P}atterns and parapatterns}, Acta Math. \textbf{169} (1992), no.~3-4,
  229--325.

\bibitem[BKM10]{BKM10}
A.~Bonifant, J.~Kiwi, and J.~Milnor, \emph{Cubic polynomial maps with periodic
  critical orbit. {II}. {E}scape regions}, Conform. Geom. Dyn. \textbf{14}
  (2010), 68--112.

\bibitem[BLM16]{BLM16}
M.~Bonk, M.~Lyubich, and S.~Merenkov, \emph{Quasisymmetries of {S}ierpi\'{n}ski
  carpet {J}ulia sets}, Adv. Math. \textbf{301} (2016), 383--422.

\bibitem[Bon06]{Bon06}
M.~Bonk, \emph{Quasiconformal geometry of fractals}, {ICM}. {V}ol. {II}, Eur.
  Math. Soc., Z\"{u}rich, 2006, pp.~1349--1373.

\bibitem[Bon11]{Bon11}
\bysame, \emph{Uniformization of {S}ierpi\'{n}ski carpets in the plane},
  Invent. Math. \textbf{186} (2011), no.~3, 559--665.

\bibitem[CG93]{CG93}
L.~Carleson and T.~W. Gamelin, \emph{Complex dynamics}, Universitext: Tracts in
  Mathematics, Springer-Verlag, New York, 1993.

\bibitem[CT11]{CT11}
G.~Cui and L.~Tan, \emph{A characterization of hyperbolic rational maps},
  Invent. Math. \textbf{183} (2011), no.~3, 451--516.

\bibitem[DG08]{DG08}
R.~L. Devaney and A.~Garijo, \emph{Julia sets converging to the unit disk},
  Proc. Amer. Math. Soc. \textbf{136} (2008), no.~3, 981--988.

\bibitem[DH85]{DH85b}
A.~Douady and J.~H. Hubbard, \emph{On the dynamics of polynomial-like
  mappings}, Ann. Sci. \'{E}cole Norm. Sup. (4) \textbf{18} (1985), no.~2,
  287--343.

\bibitem[DH93]{DH93}
\bysame, \emph{A proof of {T}hurston's topological characterization of rational
  functions}, Acta Math. \textbf{171} (1993), no.~2, 263--297.

\bibitem[DLU05]{DLU05}
R.~L. Devaney, D.~M. Look, and D.~Uminsky, \emph{The escape trichotomy for
  singularly perturbed rational maps}, Indiana Univ. Math. J. \textbf{54}
  (2005), no.~6, 1621--1634.

\bibitem[DM08]{DM08}
R.~L. Devaney and S.~M. Marotta, \emph{Evolution of the {M}c{M}ullen domain for
  singularly perturbed rational maps}, Topology Proc. \textbf{32} (2008),
  301--320.

\bibitem[DP09]{DP09}
R.~L. Devaney and K.~M. Pilgrim, \emph{Dynamic classification of escape time
  {S}ierpi\'{n}ski curve {J}ulia sets}, Fund. Math. \textbf{202} (2009), no.~2,
  181--198.

\bibitem[DRS07]{DRS07}
R.~L. Devaney, M.~M. Rocha, and S.~Siegmund, \emph{Rational maps with
  generalized {S}ierpinski gasket {J}ulia sets}, Topology Appl. \textbf{154}
  (2007), no.~1, 11--27.

\bibitem[DS97]{DS97}
G.~David and S.~Semmes, \emph{Fractured fractals and broken dreams}, Oxford
  Lecture Series in Mathematics and its Applications, vol.~7, The Clarendon
  Press, Oxford University Press, New York, 1997.

\bibitem[Etk22]{Etk22}
A.~Etkin, \emph{Dynamics of cubic rational maps under certain constraints on
  critical points}, Thesis (Ph.D.)--City University of New York, 2022.

\bibitem[GM10]{GM10}
A.~Garijo and S.~M. Marotta, \emph{Singular perturbations of {$z^n$} with a
  pole on the unit circle}, J. Difference Equ. Appl. \textbf{16} (2010),
  no.~5-6, 573--595.

\bibitem[GV73]{GV73}
F.~W. Gehring and J.~V\"{a}is\"{a}l\"{a}, \emph{Hausdorff dimension and
  quasiconformal mappings}, J. London Math. Soc. (2) \textbf{6} (1973),
  504--512.

\bibitem[HE25]{HE25}
J.~Hu and A.~Etkin, \emph{Cubic rational maps with escaping critical points,
  {P}art {II}: classification of {J}ulia sets in the case of a parabolic fixed
  point}, Discrete Contin. Dyn. Syst. \textbf{45} (2025), no.~11, 4427--4453.

\bibitem[Hei01]{Hei01}
J.~Heinonen, \emph{Lectures on analysis on metric spaces}, Universitext,
  Springer-Verlag, New York, 2001.

\bibitem[Hub16]{Hub16}
J.~H. Hubbard, \emph{Teichm\"{u}ller theory and applications to geometry,
  topology, and dynamics. {V}ol. 2, \textup{Surface homeomorphisms and rational
  functions}}, Matrix Editions, Ithaca, NY, 2016.

\bibitem[HXY25]{HXY25}
X.~He, Y.~Xiao, and F.~Yang, \emph{Rational maps whose {J}ulia sets are
  generalized {S}ierpi\'{n}ski gaskets}, Indiana Univ. Math. J., to appear,
  arXiv: 2503.18264, 2025.

\bibitem[Kle06]{Kle06}
B.~Kleiner, \emph{The asymptotic geometry of negatively curved spaces:
  uniformization, geometrization and rigidity}, {ICM}. {V}ol. {II}, Eur. Math.
  Soc., Z\"{u}rich, 2006, pp.~743--768.

\bibitem[KS09]{KS09}
O.~Kozlovski and S.~van Strien, \emph{Local connectivity and quasi-conformal
  rigidity of non-renormalizable polynomials}, Proc. Lond. Math. Soc. (3)
  \textbf{99} (2009), no.~2, 275--296.

\bibitem[LN24]{LN24}
Y.~Luo and D.~Ntalampekos, \emph{Uniformization of gasket {J}ulia sets}, arXiv:
  2411.17227, 2024.

\bibitem[LZ25]{LZ25}
Y.~Luo and Y.~Zhang, \emph{On quasiconformal non-equivalence of gasket {J}ulia
  sets and limit sets}, Ergodic Theory Dynam. Systems \textbf{45} (2025),
  no.~11, 3465--3489.

\bibitem[Ma{\~{n}}93]{Man93}
R.~Ma{\~{n}}{\'{e}}, \emph{On a theorem of {F}atou}, Bol. Soc. Brasil. Mat.
  (N.S.) \textbf{24} (1993), no.~1, 1--11.

\bibitem[Mar08]{Mar08b}
S.~M. Marotta, \emph{Singular perturbations of {$z^n$} with multiple poles},
  Internat. J. Bifur. Chaos Appl. Sci. Engrg. \textbf{18} (2008), no.~4,
  1085--1100.

\bibitem[McM88]{McM88}
C.~T. McMullen, \emph{Automorphisms of rational maps}, Holomorphic functions
  and moduli, {V}ol. {I} ({B}erkeley, {CA}, 1986), Math. Sci. Res. Inst. Publ.,
  vol.~10, Springer, New York, 1988, pp.~31--60.

\bibitem[MD08]{MD08}
M.~Morabito and R.~L. Devaney, \emph{Limiting {J}ulia sets for singularly
  perturbed rational maps}, Internat. J. Bifur. Chaos Appl. Sci. Engrg.
  \textbf{18} (2008), no.~10, 3175--3181.

\bibitem[Mil93]{Mil93}
J.~Milnor, \emph{Geometry and dynamics of quadratic rational maps, \emph{with
  an appendix by the author and L. Tan}}, Experiment. Math. \textbf{2} (1993),
  no.~1, 37--83.

\bibitem[Mil06]{Mil06}
\bysame, \emph{Dynamics in one complex variable}, third ed., Annals of
  Mathematics Studies, vol. 160, Princeton University Press, Princeton, NJ,
  2006.

\bibitem[Mil09]{Mil09}
\bysame, \emph{Cubic polynomial maps with periodic critical orbit. {I}},
  Complex dynamics, A K Peters, Wellesley, MA, 2009, pp.~333--411.

\bibitem[Nta26]{Nta26}
D.~Ntalampekos, \emph{Uniformization problems in the plane: a survey}, arXiv:
  2603.15098, 2026.

\bibitem[PT00]{PT00}
K.~M. Pilgrim and L.~Tan, \emph{Rational maps with disconnected {J}ulia set,
  \textup{G\'{e}om\'{e}trie complexe et syst\`emes dynamiques (Orsay, 1995)}},
  Ast\'{e}risque (2000), no.~261, 349--384.

\bibitem[PZ21]{PZ21}
F.~Przytycki and A.~Zdunik, \emph{On {H}ausdorff dimension of polynomial not
  totally disconnected {J}ulia sets}, Bull. Lond. Math. Soc. \textbf{53}
  (2021), no.~6, 1674--1691.

\bibitem[QWY12]{QWY12}
W.~Qiu, X.~Wang, and Y.~Yin, \emph{Dynamics of {M}c{M}ullen maps}, Adv. Math.
  \textbf{229} (2012), no.~4, 2525--2577.

\bibitem[QY09]{QY09}
W.~Qiu and Y.~Yin, \emph{Proof of the {B}ranner-{H}ubbard conjecture on
  {C}antor {J}ulia sets}, Sci. China Ser. A \textbf{52} (2009), no.~1, 45--65.

\bibitem[QY21]{QY21}
W.~Qiu and F.~Yang, \emph{Quasisymmetric uniformization and {H}ausdorff
  dimensions of {C}antor circle {J}ulia sets}, Trans. Amer. Math. Soc.
  \textbf{374} (2021), no.~7, 5191--5223.

\bibitem[QYY15]{QYY15}
W.~Qiu, F.~Yang, and Y.~Yin, \emph{Rational maps whose {J}ulia sets are
  {C}antor circles}, Ergodic Theory Dynam. Systems \textbf{35} (2015), no.~2,
  499--529.

\bibitem[QYY18]{QYY18}
\bysame, \emph{Quasisymmetric geometry of the {J}ulia sets of {M}c{M}ullen
  maps}, Sci. China Math. \textbf{61} (2018), no.~12, 2283--2298.

\bibitem[QYZ19]{QYZ19}
W.~Qiu, F.~Yang, and J.~Zeng, \emph{Quasisymmetric geometry of {S}ierpi\'{n}ski
  carpet {J}ulia sets}, Fund. Math. \textbf{244} (2019), no.~1, 73--107.

\bibitem[Roe06]{Roe06a}
P.~Roesch, \emph{On capture zones for the family
  {$f_\lambda(z)=z^2+\lambda/z^2$}}, Dynamics on the {R}iemann sphere, Eur.
  Math. Soc., Z\"{u}rich, 2006, pp.~121--129.

\bibitem[Shi87]{Shi87}
M.~Shishikura, \emph{On the quasiconformal surgery of rational functions}, Ann.
  Sci. \'{E}cole Norm. Sup. (4) \textbf{20} (1987), no.~1, 1--29.

\bibitem[Shi98]{Shi98}
\bysame, \emph{The {H}ausdorff dimension of the boundary of the {M}andelbrot
  set and {J}ulia sets}, Ann. of Math. (2) \textbf{147} (1998), no.~2,
  225--267.

\bibitem[Ste97]{Ste97}
N.~Steinmetz, \emph{Jordan and {J}ulia}, Math. Ann. \textbf{307} (1997), no.~3,
  531--541.

\bibitem[Ste06]{Ste06b}
\bysame, \emph{Sierpi\'{n}ski curve {J}ulia sets of rational maps}, Comput.
  Methods Funct. Theory \textbf{6} (2006), no.~2, 317--327.

\bibitem[Wan21]{Wan21}
X.~Wang, \emph{Hyperbolic components and cubic polynomials}, Adv. Math.
  \textbf{379} (2021), No. 107554, 42 pp.

\bibitem[XQY14]{XQY14}
Y.~Xiao, W.~Qiu, and Y.~Yin, \emph{On the dynamics of generalized {M}c{M}ullen
  maps}, Ergodic Theory Dynam. Systems \textbf{34} (2014), no.~6, 2093--2112.

\bibitem[Yan21]{Yan21}
F.~Yang, \emph{Cantor {J}ulia sets with {H}ausdorff dimension two}, Int. Math.
  Res. Not. (2021), no.~7, 4994--5006.

\bibitem[Yin92]{Yin92}
Y.~Yin, \emph{On the {J}ulia sets of quadratic rational maps}, Complex
  Variables Theory Appl. \textbf{18} (1992), no.~3-4, 141--147.

\bibitem[Zha10]{Zha10}
Y.~Zhai, \emph{A generalized version of {B}ranner-{H}ubbard conjecture for
  rational functions}, Acta Math. Sin. (Engl. Ser.) \textbf{26} (2010), no.~11,
  2199--2208.

\bibitem[ZJ09]{ZJ09}
G.~Zhang and Y.~Jiang, \emph{Combinatorial characterization of sub-hyperbolic
  rational maps}, Adv. Math. \textbf{221} (2009), no.~6, 1990--2018.

\end{thebibliography}

\end{document}